\newcommand{\C}{\mathbb C}
\newcommand{\R}{\mathbb R}
\newcommand{\h}{\mathbb H}
\newcommand{\s}{\mathbb S}
\newcommand{\M}{\mathbb M}
\newcommand{\im}[1]{\mathrm{Im}(#1)}
\newcommand{\iso}[1]{\mathrm{Iso}(#1)}
\DeclareMathOperator{\tr}{\mathrm{tr}}
\DeclareMathOperator{\divv}{\mathrm{div}}
\DeclareMathOperator{\arctanh}{\mathrm{atanh}}
\DeclareMathOperator{\arccosh}{\mathrm{acosh}}
\DeclareMathOperator{\arcsinh}{\mathrm{asinh}}
\DeclareMathOperator{\sech}{\mathrm{sech}}
\DeclareMathOperator{\Ric}{\mathrm{Ric}}
\DeclareMathOperator{\trace}{\mathrm{tr}}
\DeclareMathOperator{\psl2}{\widetilde{\mathrm{PSL}}_{2}(\R)}
\DeclareMathOperator{\ekappatau}{\mathbb{E}(\kappa,\tau)}
\theoremstyle{theorem}
\newtheorem{theorem}{Theorem}[section]
\newtheorem{lemma}[theorem]{Lemma}
\newtheorem{proposition}[theorem]{Proposition}
\newtheorem{corollary}[theorem]{Corollary}
\theoremstyle{definition}
\newtheorem{definition}[theorem]{Definition}
\newtheorem{example}[theorem]{Example}
\newtheorem{remark}[theorem]{Remark}
\newtheorem*{claim}{Claim}
\numberwithin{equation}{section}
\numberwithin{figure}{section}
\begin{document}

\title[CMC Isometric Immersions into $\s^2 \times \R$ and $\h^2 \times \R$]{Constant mean curvature Isometric Immersions into $\s^2 \times \R$ and $\h^2 \times \R$ and related results}

\author[Beno\^it Daniel]{Beno\^it Daniel$^1$}
\author[Iury Domingos]{Iury Domingos$^{1,2}$}
\author[Feliciano Vit\'orio]{Feliciano Vit\'orio$^2$}
%\address{Universit\'e de Lorraine \\ Institut \'Elie Cartan de Lorraine\\ UMR 7502 CNRS, B.P. 70239\\
%F-54506 Vandoeuvre-l`es-Nancy cedex, France}
%\address{Universit\'e de Lorraine \\ Institut \'Elie Cartan de Lorraine\\ UMR 7502 CNRS, B.P. 70239\\
%F-54506 Vandoeuvre-l\`es-Nancy cedex, France}

\address{$^1$ Universit\'e de Lorraine \\ Institut \'Elie Cartan de Lorraine\\ UMR 7502 CNRS, B.P. 70239\\
F-54506 Vandoeuvre-l\`es-Nancy cedex, France.}

\address{$^2$ Universidade Federal de Alagoas \\ Instituto de Matem\'{a}tica  \\ Campus A. C. Sim\~{o}es, BR 104 - Norte, Km 97, 57072-970.
Macei\'o - AL, Brazil.}

\email{benoit.daniel@univ-lorraine.fr}

\email{domingos1@univ-lorraine.fr}

\email{feliciano@pos.mat.ufal.br}

\thanks{B. D. and F. V. were partially supported by the International Cooperation Program CAPES/COFECUB Fondation.
The work of I. D. was conducted during a scholarship supported by the International Cooperation Program CAPES/COFECUB Fondation at the University of Lorraine; financed by CAPES -- Brazilian Federal Agency for Support and Evaluation of Graduate Education within the Ministry of Education of Brazil.}

\keywords{Isometric immersions, constant mean curvature surfaces, parallel mean curvature surfaces, homogenous 3-manifolds}
\subjclass[2010]{Primary 53C42; Secondary 53A10, 53C30}
\maketitle

\begin{abstract}
In this article, we study constant mean curvature isometric immersions into $\s^2\times\R$ and $\h^2\times \R$ and we classify these isometric immersions when the surface has constant intrinsic curvature.  As applications, we use the sister surface correspondence to classify the constant mean curvature surfaces with constant intrinsic curvature in the $3-$dimensional homogenous manifolds $\mathbb{E}(\kappa, \tau)$ and we use the Torralbo-Urbano correspondence to classify the parallel mean curvature surfaces in $\s^2 \times \s^2$ and $\h^2 \times \h^2$ with constant intrinsic curvature. It is worthwhile to point out that these classifications provide new examples.
\end{abstract}

\section{Introduction}

This paper deals with the classification of constant mean curvature surfaces and parallel mean curvature surfaces with constant intrinsic curvature  in some ambient manifolds. In $\R^3$, it is a classical result that an $H$-constant mean curvature surface with constant intrinsic curvature $K$ is either part of a plane, when $H=0$, or part of a right circular cylinder with $K=0$ or part of a round sphere of radius $1/H$ with $K=H^2$, when $H\neq 0$ (Levi-Civita \cite{levi-civita37}).

In the 3-space forms $\M^3_c$, surfaces with $H$ and $K$ constants are isoparametric surfaces, that is, the two principal curvatures are constant. In this direction, the first results of local classification in $\s^3_c$ and $\h^3_c$ started with E. Cartan \cite{Cartan1938} and since then many related results have been obtained. For example, minimal surfaces with constant intrinsic curvature in 3-dimensional space forms $\mathbb{M}^3_{c}$ are either totally geodesic with $K=c$ or a part of the Clifford torus with $K=0$ in $\s^3_c$ (Chen \cite{Chen72}, Lawson \cite{lawson69}).

In codimension 2, for 4-dimensional space forms $\mathbb{M}^4_{c}$, minimal surfaces with constant intrinsic curvature are either totally geodesics with $K=c$, or a part of Clifford torus with $K=0$ in a totally geodesics $\s^3_c$ of $\s^4_c$, or a part of Veronese surfaces with $K=c/3$ in $\s^4_c$ (Kenmotsu \cite{kenmotsu83}). For parallel mean curvature surfaces with constant intrinsic curvature in the same ambient manifold either $K=0$ or $K=H^2+c$. If $c\geq0$, then the surface is either a part of product of circles with $K=0$ or a part of 2-sphere with $K=H^2+c$, where $H$ denotes the norm of the mean curvature vector (Hoffman \cite{hoffman1973}).
 When the ambient manifold is a complex $2-$dimensional space form ($\mathbb{C}^2$, $\mathbb{CH}^2(c)$, $\mathbb{CP}^2(c)$), parallel mean curvature surfaces with constant intrinsic curvature are either a product of circles, or a cylinder, or a round sphere, or a slant surface or one of the Hirakawa examples (Hirakawa  \cite{hirakawa06gd}).

 The aim of the work is to generalize these results to some other ambient homogeneous manifolds.

First, we classify the constant mean curvature surfaces with constant intrinsic curvature in $\s^2\times\R$ and $\h^2\times \R$ (Theorem \ref{theo-Kconstante}). The minimal case has been treated by the first author in \cite{danielimj15}: either $K=0$ or $K=c$ and the surface is totally geodesic, or $K=c<0$ and the surface is part of an associate surface of the parabolic generalized catenoid in $\h^2_c\times\R$. In this paper we consider the non-minimal case.  Our study is based on a system of partial differential equations satisfied by the metric, the \emph{angle} and \emph{height functions} of the surface (which holds, more generally, without the condition of constant intrinsic curvature).

We show that apart from the vertical cylinders with $K=0$, there are exactly two examples of $H$-constant mean curvature surfaces in $\h^2\times\R$ with constant intrinsic curvature. Both satisfy the relation $K=4H^2+c<0$. The first one is a surface found in the classification of surfaces whose Abresch-Rosenberg differential vanishes \cite{abresch2004} and the second one is a helicoidal surface that appears in the study of screw motion surfaces in $\s^2\times\R$ and $\h^2\times \R$, due to Sa Earp and Toubiana \cite{saearp2005}; however, it was not explicit there that this surface has both constant intrinsic and mean curvatures.

In particular, this classification provides an example of two CMC isometric immersions of the same Riemannian surface with a same mean curvature and which are not congruent up to an isometric reparametrization; this answers a question by Torralbo and Urbano \cite[Remark 1]{torralbotams12}.

As a corollary (Theorem \ref{theo-Ekappatau}), using the sister surface correspondence \cite{danielcmh07}, we obtain a classification of CMC surfaces with constant intrinsic curvature in the homogeneous $3$-manifolds $\mathbb{E}(\kappa,\tau)$, for $\tau \not = 0$ and $\kappa-4\tau^2 \not = 0$. We show that apart from the cases with constant angle functions, we get either a minimal surface in $\mathbb{E}(\kappa,\tau)$ invariant by parabolic isometries with $K=\kappa$ or one of twin helicoidal surfaces in $\mathbb{E}(\kappa,\tau)$ with $K=4H^2+\kappa<0$. The first one is a surface found by Pe\~nafiel in \cite{Penafiel2012} and the second ones are motivated by the study of screw motion surfaces in $\psl2$ in \cite{penafiel15} of the same author.

Next, as an application of the classification in $\M^2_c\times\R$ and the results by Torralbo and Urbano \cite{torralbotams12}, we obtain a classification of non-minimal parallel mean curvature (PMC) surfaces with constant intrinsic curvature in $\M^2_c\times\M^2_c$ (Theorems \ref{thms2s2} and \ref{thmh2h2}). Note that $\M^2_c\times\M^2_c$ are K\"ahler-Einstein manifolds and are among the 19 models of geometry in dimension 4 (see for instance \cite{Wall1986}). In particular, in $\h^2\times\h^2$ we obtain a new family of such surfaces; for some of them, the group of isometries induced by congruences (i.e., isometries of the ambient manifold) is discrete. Up to our knowledge, this also provides the first examples of PMC surfaces with non identically zero extrinsic normal curvature.

This work is organized as follows: In Section \ref{section2}, we fix some notations and we recall previous results about surfaces in  $\s^2\times\R$ and $\h^2\times \R$. Moreover, from of study the angle and height functions and compatibility equations obtained in \cite[Theorem 3.3]{danielcmh07}, we establish necessary and sufficient conditions for a Riemannian surface to be isometrically immersed as a constant mean curvature surface in $\s^2\times\R$ or $\h^2\times \R$ (Theorem \ref{theo1}). After, we derive an additional  order-1 equation using the classical Weitzenb\"ock-Bochner formula.

In Section \ref{section3}, we suppose that the metric of the surface has constant intrinsic curvature and we consider a function associated to the Abresch-Rosenberg differential. This function satisfies a new differential partial equation that can be reduced to an order-0 equation by the previous results. We get the classification for the $H$-constant mean curvature surfaces in $\M_c^2\times\R$ with constant intrinsic curvature (Theorem \ref{theo-Kconstante}).

In Section \ref{section4}, we give the first application of Theorem \ref{theo-Kconstante}. We use the sister surface correspondence \cite[Theorem 5.2]{danielcmh07} to get the classification of $H$-constant mean curvature surfaces in $\mathbb{E}(\kappa,\tau)$ with constant intrinsic curvature, for $\tau\neq 0$.

Finally, in Section \ref{sectionpmc}, we use Theorem \ref{theo-Kconstante} and the correspondence by Torralbo and Urbano \cite{torralbotams12} between PMC immersions into $\M^2_c\times\M^2_c$ and pairs of CMC immersions into $\M^2_c\times\R$ to classify PMC surfaces in $\M^2_c\times\M^2_c$ with constant intrinsic curvature. The main difficulty is that the Torralbo-Urbano correspondence relates immersions and not surfaces. %We use some elementary linear algebra arguments to conclude.

\section{The angle and height functions}\label{section2}

We fix real numbers $c$ and $H$ such that $c\neq 0$. Let $\mathbb{M}^2_c$ be the simply connected Riemannian of constant intrinsic curvature $c$, that is, $\mathbb{M}^2_c=\mathbb{S}^2_c$ is the 2-sphere for $c>0$ and $\mathbb{M}^2_c=\mathbb{H}^2_c$ is the hyperbolic plane for $c<0$. For simplicity, we sometimes use the normalization $c=\pm 1$. In this case, $\s^2_{1}=\s^2$ and $\h^2_{-1}=\h^2$.

\begin{theorem}[Daniel, \cite{danieltams09}]\label{danieltams09}
Let $(\Sigma,\dif s^2)$ be an oriented simply connected Riemannian surface, $\nabla$ its Riemannian connection and $K$ be the intrinsic curvature of $\dif s^2$. Let $S:\mathrm{T}\Sigma\to \mathrm{T}\Sigma$ be a field of symmetric operators, $T\in \mathcal{X}(\Sigma)$ and $\nu:\Sigma\to[-1,1]$ be a smooth function. Then there is an isometric immersion $f:\Sigma\to\M^2_c\times \R$ such that the shape operator with respect to the normal $N$ associated to $f$ is
\[\dif f\circ S\circ \dif f^{-1}\]
and such that
\[\partial_t=\dif f(T)+\nu N\]
if and only if the 4-uple $(\dif s^2,S,T,\nu)$ satisfies the following equations on $\Sigma$:
\begin{gather}
K=\det S+c\nu^2,\tag{C1}\label{C1}\\
\nabla_X SY-\nabla_Y SX-S[X,Y]=c\nu\big(\langle Y,T\rangle X -\langle X,T\rangle Y\big),\tag{C2}\label{C2}\\
\nabla_X T=\nu SX,\tag{C3}\label{C3}\\
\dif\nu(X)+\langle SX, T\rangle=0,\tag{C4}\label{C4}\\
\|T\|^2+\nu^2=1\tag{C5}\label{C5}.
\end{gather}

Moreover the immersion is unique up to an isometry of $\M^2_c\times \R$ that preserve both orientations of $\M^2_c$ and $ \R$.
\end{theorem}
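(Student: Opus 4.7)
The plan is to reduce this to the classical fundamental theorem for submanifolds of a flat ambient space by viewing $\M^2_c\times\R$ as a hypersurface in a four-dimensional flat space $\R^4_\varepsilon$ (Euclidean when $c>0$, Lorentzian with signature $(3,1)$ when $c<0$), in which $\M^2_c$ appears as the pseudosphere $\langle x,x\rangle=1/c$ with unit normal $\tilde N=\sqrt{|c|}(x,0)$.

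For uniqueness, given two immersions $f_1,f_2$ inducing the same data $(\dif s^2,S,T,\nu)$, I pre-compose $f_2$ with an orientation-preserving isometry of $\M^2_c\times\R$ so that $f_1(p_0)=f_2(p_0)$, $\dif f_1|_{p_0}=\dif f_2|_{p_0}$ and $N_1(p_0)=N_2(p_0)$ at a chosen base point $p_0$; equation \eqref{C5}, together with matching $T(p_0)$ and $\nu(p_0)$, ensures that $\partial_t$ is placed in the same direction by both immersions. Then $f_1$ and $f_2$ satisfy the same first-order Gauss-Weingarten system with identical initial data and must agree everywhere by uniqueness of ODE solutions and connectedness of $\Sigma$.

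For existence, I introduce the abstract rank-2 normal bundle $E=\mathbb{R}N\oplus\mathbb{R}\tilde N$ over $\Sigma$, equipped with the obvious fiber metric and with a normal connection whose off-diagonal entry between $N$ and $\tilde N$ is dictated by differentiating the relation $\partial_t=\dif f(T)+\nu N$ (essentially the content of \eqref{C3} and \eqref{C4}). The vector-valued second fundamental form is defined so that its $N$-component is $S$, while its $\tilde N$-component is the shape operator of $\M^2_c\times\R\hookrightarrow\R^4_\varepsilon$ pulled back, an expression involving only $T$, $\nu$ and $c$. The core step is to verify that the Gauss, Codazzi and Ricci equations of this codimension-two data in $\R^4_\varepsilon$ are precisely equivalent to the system \eqref{C1}--\eqref{C5}: \eqref{C1} is Gauss, \eqref{C2} is Codazzi (with the term $c\nu(\langle Y,T\rangle X-\langle X,T\rangle Y)$ arising from the ambient curvature of $\M^2_c\times\R$), and \eqref{C3}--\eqref{C4} together with Ricci express that $\partial_t$ is parallel in $\R^4_\varepsilon$. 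The classical Bonnet-type theorem then yields an isometric immersion $F:\Sigma\to\R^4_\varepsilon$ realizing this data, and \eqref{C5} forces $F(\Sigma)$ to lie in the level set $\M^2_c\times\R$, so $F$ factors as the desired $f:\Sigma\to\M^2_c\times\R$.

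The main obstacle will be the algebraic bookkeeping of the codimension-two Gauss-Codazzi-Ricci system, particularly identifying the correct mixed component of the normal connection on $E$ and checking that the Ricci equation holds automatically once \eqref{C3}--\eqref{C4} are imposed. The restriction in the uniqueness statement to isometries preserving both orientations of $\M^2_c$ and of $\R$ is necessary because reversing either orientation changes the sign of $\nu$ or of $T$, hence alters the prescribed data.
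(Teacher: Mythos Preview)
The paper does not prove this theorem; it is quoted from \cite{danieltams09} as background and used as a black box throughout Section~\ref{section2}. So there is no ``paper's own proof'' to compare against.

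That said, your outline is the standard route and is essentially the one taken in the original reference: embed $\M^2_c\times\R$ isometrically into a flat $\R^4_\varepsilon$ (Euclidean for $c>0$, Lorentzian for $c<0$) via the pseudosphere model, build a codimension-two normal bundle with second fundamental form having $N$-component $S$ and $\tilde N$-component determined by $T,\nu,c$, and then check that \eqref{C1}--\eqref{C5} are exactly the Gauss--Codazzi--Ricci equations for this data in flat space. One point worth sharpening: the Ricci equation is not quite automatic from \eqref{C3}--\eqref{C4} alone; rather, the normal connection on $E$ is forced to be flat because $\tilde N$ is the position vector (up to scale) and $N$ is orthogonal to it, so the Ricci equation becomes the algebraic identity $[S,A_{\tilde N}]=0$, which holds because $A_{\tilde N}$ is a multiple of the orthogonal projection onto $T^\perp$ in $\mathrm{T}\Sigma$ combined with the symmetry of $S$---you should make this explicit rather than bundling it into ``automatic''. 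Also, \eqref{C5} is what guarantees the image lies on the hypersurface, but you also need it at the outset to make $\tilde N$ a \emph{unit} normal; otherwise the fiber metric on $E$ is not well defined.
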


We will say that $(\dif s^2,S,T,\nu)$ are Gauss-Codazzi data of the immersion $f$ and that $\nu$ is its angle function. The height function of $f$ is the map $h=p\circ f$ where $p:\M^2_c\times\R\to\R$ is the projection onto the factor $\R$.

As a first result, we derive from Theorem \ref{danieltams09} a system of compatibility equations for $H$-constant mean curvature surfaces in $\M^2_c\times \R$ in terms of the angle and height functions. We will say that an isometry of $\M^2_c\times \R$ is horizontal if it is the identity on the factor $\R$.

\begin{theorem}\label{theo1}
Let $\Sigma$ be an $H$-constant mean curvature surface in $\M^2_c\times \R$.
Then the angle function $\nu: \Sigma\to [-1,1]$ and the height function $h:\Sigma\to \R$ of $\Sigma$ satisfy the following system
\begin{align}
  \|\nabla\nu +H \nabla h\|^2 &= (H^2-K+c\nu^2)(1-\nu^2),\label{M1} \\
  \Delta \nu &= (2K -c(1+\nu^2)-4H^2)\nu,\label{M2} \\
  \|\nabla h\|^2 &= 1-\nu^2,\label{M3} \\
  \Delta h &= 2H\nu,\label{M4}
\end{align}
where $K$ denotes the intrinsic  curvature of $\Sigma$.\\

Conversely, let $(\Sigma, \dif s^2)$ be an oriented simply connected Riemannian surface, with curvature $K$. Assume that
$\nu: \Sigma\to (-1,1)$ and $h:\Sigma\to\R$ are smooth functions satisfying the system of four partial
differential equations above. Then there is a
 $H$-constant mean curvature isometric immersion $f:\Sigma\to \M^2_c\times \R$ such that $\nu$ and $h$ are the angle function and the height function of $\Sigma$, respectively. Moreover the immersion is unique up to horizontal isometries of $\M^2_c\times \R$ that preserves the orientation of $\M^2_c$.
\end{theorem}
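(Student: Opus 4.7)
The plan is to reduce Theorem \ref{theo1} to Theorem \ref{danieltams09} via the correspondence $T=\nabla h$ and the symmetric operator $S$ defined (in the converse direction) by $ST=-\nabla\nu$ (the content of (C4)) together with $\tr S=2H$. On an isometric immersion $f$ with $h=p\circ f$ one has $\dif h(X)=\langle\dif f(X),\partial_t\rangle=\langle X,T\rangle$, hence $T=\nabla h$; and in the converse direction, the hypothesis $\nu\in(-1,1)$ together with (M3) forces $T$ to be nowhere vanishing, so prescribing $ST$ and $\tr S=2H$ determines $S$ uniquely as a symmetric operator on every $T_{p}\Sigma$.

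For the direct implication, starting from $(\dif s^{2},S,T,\nu)$ satisfying (C1)--(C5), equation (M3) is (C5) rewritten and (M4) follows by tracing (C3). For (M1), I would write $\nabla\nu+H\nabla h=(H\,\mathrm{id}-S)T$ and invoke the purely algebraic identity $(H\,\mathrm{id}-S)^{2}=(H^{2}-\det S)\,\mathrm{id}$, valid for any symmetric operator of trace $2H$ on a $2$-dimensional Euclidean space, and then use (C1) to convert $H^{2}-\det S$ into $H^{2}-K+c\nu^{2}$. For (M2), contracting the Codazzi equation (C2) and using the constancy of $H$ gives
\[\sum_{i}\langle(\nabla_{e_{i}}S)T,e_{i}\rangle=c\nu(1-\nu^{2});\]
feeding this together with (C3) and the $2$-dimensional identity $\tr(S^{2})=4H^{2}-2\det S$ into the expansion $\Delta\nu=-\divv(ST)$ produces (M2) after one more use of (C1).

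For the converse, define $T=\nabla h$ and $S$ as above. Conditions (C5) and (C4) are immediate, and (C1) follows from (M1), the algebraic identity above read in reverse, and $\|T\|^{2}=1-\nu^{2}\neq 0$. For (C3), compare the symmetric operators $\mathrm{Hess}(h)$ and $\nu S$: their traces agree by (M4) and the definition of $S$; differentiating (M3) yields $\mathrm{Hess}(h)(T)=-\nu\,\nabla\nu=\nu ST$; and two symmetric operators on a $2$-dimensional space with the same trace that agree on one nonzero vector must coincide, so $\nabla_{X}T=\mathrm{Hess}(h)(X)=\nu SX$.

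The main obstacle is the Codazzi equation (C2). I plan to derive it from the Ricci identity $\nabla_{X}\nabla_{Y}T-\nabla_{Y}\nabla_{X}T-\nabla_{[X,Y]}T=R(X,Y)T$ applied to $T=\nabla h$. Substituting (C3) and (C4), using the $2$-dimensional curvature formula $R(X,Y)T=K(\langle Y,T\rangle X-\langle X,T\rangle Y)$, the pointwise identity
\[\langle SX,T\rangle SY-\langle SY,T\rangle SX=\det S\,(\langle X,T\rangle Y-\langle Y,T\rangle X)\]
(valid for any symmetric operator on a $2$-dimensional space), and (C1), the Ricci identity collapses to
\[\nu\bigl((\nabla_{X}S)Y-(\nabla_{Y}S)X-c\nu(\langle Y,T\rangle X-\langle X,T\rangle Y)\bigr)=0.\]
On $\{\nu\neq 0\}$ this is exactly (C2). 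Since (M2) has the form $\Delta\nu=q\nu$ with smooth $q$, Aronszajn's unique continuation theorem implies that on each connected component of $\Sigma$ either $\{\nu\neq 0\}$ is dense (so (C2) extends by continuity) or $\nu\equiv 0$; in the latter case (C3) forces $T$ to be parallel, the induced metric is flat, and (C2) is verified directly in a parallel frame in which $S$ has the constant matrix $\mathrm{diag}(0,2H)$. Theorem \ref{danieltams09} then produces the immersion, and the restriction to horizontal isometries preserving the orientation of $\M^{2}_{c}$ in the uniqueness statement reflects exactly the requirement that the height function $h=p\circ f$ be preserved.
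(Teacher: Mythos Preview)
Your proof is correct and takes a genuinely different route from the paper's. The paper proves (M1) by writing $S$ as an explicit matrix in the frame $\{T/\|T\|,JT/\|T\|\}$ and computing $\det S$; your Cayley--Hamilton identity $(H\,\mathrm{id}-S)^{2}=(H^{2}-\det S)\,\mathrm{id}$ is cleaner and works uniformly. For (M2) the paper invokes the Jacobi operator and $L\nu=0$, whereas you contract (C2) and expand $\Delta\nu=-\divv(ST)$ directly; both are short. In the converse, the paper first derives an explicit formula (their equation (A1)) for $\nabla_{X}T$ and then checks (C2) by a lengthy componentwise computation on the pair $(T,JT)$; your argument for (C3) via ``two symmetric operators with equal trace agreeing on a nonzero vector coincide'' is more conceptual, and your derivation of (C2) from the Ricci identity applied to $T=\nabla h$ is considerably more transparent. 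The price you pay is the appeal to Aronszajn's unique continuation to dispose of the set $\{\nu=0\}$: this is legitimate (the equation $\Delta\nu=q\nu$ with $q$ smooth is exactly in the scope of that theorem, and in the converse direction no analyticity is available a priori), but it is a heavier external tool than anything the paper uses. The paper's direct computation, by contrast, never needs to separate the cases $\nu\neq0$ and $\nu=0$ for (C2). Both approaches yield the same uniqueness statement via Theorem~\ref{danieltams09}.
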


\begin{proof}
Let $\Sigma$ be an $H$-constant mean curvature surface in $\M^2_c\times \R$. Let $(\dif s^2,S,T,\nu)$ be the Gauss-Codazzi data on $\Sigma$
 and $J$ the rotation of angle $\pi/2$ on $\mathrm{T}\Sigma$. By the symmetry of $S$ and $\trace S=2H$, a straightforward computation shows that $SJ+JS=2H J$, and the compatibility equation \eqref{C1} implies that $\nabla \nu = - ST$.  Away from the points where $T=0$, we consider the orthonormal frame
$\{T/\|T\|,JT/\|T\|\}$. Then  $S$ has the form
\[S=\frac{1}{\|T\|^2}\begin{pmatrix}
  -\dif \nu(T) & -\dif \nu(JT) \\
  -\dif \nu(JT) & 2H\|T\|^2+\dif \nu(T)
\end{pmatrix}.\]

For the height function definition we have that
\[\dif  h(X)=\langle X,\partial_t\rangle,  \text{  for every $X\in \mathrm{T}\Sigma$},\]
that is, $T=\nabla h$ and then
\[\det S =-\frac{1}{\|T\|^2}\Big(\|\nabla \nu\|^2+
2H\langle\nabla\nu,\nabla h\rangle\Big).\]

Hence, the equations \eqref{C1} and \eqref{C5} imply that
\[(K-c\nu^2)(1-\nu^2)=-\|\nabla \nu\|^2-2H\langle\nabla\nu,\nabla h\rangle,\]
i.e.,  \eqref{M1} holds. When $\nu^2=1$, the right and left
sides of \eqref{M1} are equals to zero, then for this case \eqref{M1} also holds.

Let $L$ be the Jacobi operator of $\Sigma$, given by $L=\Delta + \|S\|^2 + \overline{\Ric}(N)$, where $\overline{\Ric}$ is the Ricci tensor of $\M^2_c\times \R$. Since
$\|S\|^2=4H^2-2\det S$ and $\overline{\Ric} (N)=c(1-\nu^2)$, the equation \eqref{C1} implies that
\[L=\Delta -2K +c(1+\nu^2)+4H^2.\]

On the other hand, as $\partial_t$ is a Killing field and $\nu=\langle \partial_t,N\rangle$  then
 $L\nu = 0$, so \eqref{M2} holds.

Since $\nabla h = T$, equation \eqref{M3} follows directly from equation \eqref{C5}. Moreover,
equation \eqref{C3} gives that $\nabla_X \nabla h = \nu S X$, which concludes \eqref{M4} by the definition of
divergence, finishing the first assertion. Note that also by equation \eqref{C3}, if $\nu^2\neq 1$, $T$ satisfies the following
equation for every $X\in \mathrm{T}\Sigma$:
\begin{equation}\label{A1}
\nabla_X T= -\frac{\nu}{1-\nu^2}\dif \nu(X)T
-\frac{\nu}{1-\nu^2}\Big(\dif \nu(JX)
+2H\langle T,JX\rangle\Big)JT.
\end{equation}

We now prove the second part of the theorem. Let $(\Sigma, \dif s^2)$ be a real analytic simply connected Riemannian surface, $\nu: \Sigma\to (-1,1)$ and $h:\Sigma\to\R$ smooth functions on $\Sigma$ satisfying equations \eqref{M1}, \eqref{M2}, \eqref{M3} and \eqref{M4}.

\begin{claim} The vector field $T=\nabla h\in \mathrm{T}\Sigma$ satisfies equation \eqref{A1}.\\

By equation \eqref{M3} and symmetry of the Hessian of $h$, that is $\langle\nabla_X\nabla h,Y\rangle=\langle\nabla_Y\nabla h,X\rangle$ for every $X, Y\in \mathrm{T}\Sigma$, we have
\[\nabla_{\nabla h}\nabla h = -\nu\nabla \nu.\]

Since $\nu^2<1$, considering the orthonormal frame $\{\nabla h /\|\nabla h\|,J\nabla h/\|\nabla h\|\}$, by the symmetry
of Hessian of $h$ we have
\[\nabla_X \nabla h =
-\frac{\nu}{1-\nu^2}\dif \nu(X)\nabla h
+\frac{1}{1-\nu^2}\langle\nabla_{J\nabla h}\nabla h,X\rangle J \nabla h\]
for every $X\in \mathrm{T}\Sigma$. On the other hand, again by the symmetry
of Hessian of $h$ we have
\[\nabla_{JX}\nabla h+J\nabla_{X}\nabla h=\Delta h JX, \text{  for every $X\in \mathrm{T}\Sigma.$}\]

Since equation \eqref{M4} holds, we obtain that
\[\nabla_X \nabla h =
-\frac{\nu}{1-\nu^2}\dif \nu(X)\nabla h
-\frac{\nu}{1-\nu^2}\Big(\dif \nu(JX)+2H\langle \nabla h,JX\rangle\Big)J\nabla h,\]
that is, the vector field $\nabla h$ satisfies equation \eqref{A1}.
\end{claim}

Since $T=\nabla h$ does not vanish on $\Sigma$, because $\nu^2<1$, there is a unique symmetric operator $S:\mathrm{T}\Sigma\to \mathrm{T}\Sigma$ with constant trace $2H$, such that $ST=-\nabla \nu$.

We affirm that the 4-uple $(\dif s^2,S,T,\nu)$ is Gauss-Codazzi data on $\Sigma$. To prove this, it is sufficient to show the equations \eqref{C1} and \eqref{C2}. This is because \eqref{C4} is follows of definition of $S$, \eqref{C5} is the same of \eqref{M3} and the \eqref{C3} follows of \eqref{A1}, when we write any $X\in \mathrm{T}\Sigma$ in the basis $\{\nabla h /\|\nabla h\|,J\nabla h/\|\nabla h\|\}$ and use $\nabla \nu=-ST$.\\

In a previous calculation, from the fact that $\nabla \nu=-ST$ and $S$ is a symmetric operator with $\tr S=2H$, we have shown that
\[\det S  = -\frac{1}{\|T\|^2}\Big(\|\nabla \nu\|^2+
2H\dif \nu(T)\Big).\]

Using equations \eqref{M1} and \eqref{M3} we obtain the Gauss equation \eqref{C1}, that is,
\[K=\det S+c\nu^2.\]

To show equation \eqref{C2}, it is sufficient to verify for $X=T$ and $Y=JT$. Since the rotation $J$ of angle $\pi/2$ on $\mathrm{T}\Sigma$ commutes with $\nabla_X$, for every $X\in \mathrm{T}\Sigma$, and $SJ+JS=2HJ$, by the equation \eqref{C3}
we get
\[[T,JT]=2\nu(JS-HJ)T.\]

By the symmetry of $S$ and $ST=-\nabla\nu$ we have
\[\langle S[T,JT],T\rangle=2H\nu\dif \nu(JT).\]

Furthermore, again since the rotation $J$ of angle $\pi/2$ on $\mathrm{T}\Sigma$ commutes with $\nabla_X$, $SJ+JS=2HJ$, $\nabla_T T=-\nu\nabla \nu$ and $ST=-\nabla\nu$, we get
\[\nabla_T SJT-\nabla_{JT}ST= -2H\nu J\nabla\nu +J \nabla_T \nabla \nu+\nabla_{JT}\nabla\nu,\]
and, by the symmetry of Hessian of $\nu$ and since $J^2=-\mathrm{I}$, we have
\[\langle\nabla_T SJT-\nabla_{JT}ST,T\rangle=2H\nu\dif \nu(JT),\]
then
\[\langle\nabla_T SJT-\nabla_{JT}ST-S[T,JT],T\rangle=0.\]

On the other hand, again by the symmetry of $S$, $SJ+JS=2HJ$, $ST=-\nabla\nu$ and the fact that $J$ is an isometry, we have
\[\langle S[T,JT],JT\rangle=-2\nu\big(\|\nabla \nu\|^2+3H\dif \nu(T)+2H^2\|T\|^2\big).\]

Moreover, since $J$ is an isometry
\[\langle\nabla_T SJT-\nabla_{JT}ST,JT\rangle=\|T\|^2\Delta \nu -2H\nu\dif \nu(T).\]

Then
\[\langle\nabla_T SJT-\nabla_{JT}ST-S[T,JT],JT\rangle=\|T\|^2\big(\Delta \nu +4H^2\nu\big) +2\nu\big(\|\nabla \nu\|^2+2H\dif \nu(T)\big)\]
and by equations \eqref{M1}, \eqref{M2} and \eqref{M3}, we obtain
\[\langle\nabla_T SJT-\nabla_{JT}ST-S[T,JT],JT\rangle=-c\nu(1-\nu^2)^2.\]

On the other hand, computing $c\nu\big(\langle Y,T\rangle X -\langle X,T\rangle Y\big)$ for $X=T$ and $Y=JT$, respectively, we get
\[c\nu\big\langle\langle T,JT\rangle T - \langle T,T\rangle JT, T\big\rangle=0\]
and
\[c\nu\big\langle\langle T,JT\rangle T - \langle T,T\rangle JT, JT\big\rangle=-c\nu(1-\nu^2)^2.\]

Thus we showed the Codazzi equation \eqref{C2} and then $(\dif s^2, S, T, \nu)$ are Gauss-Codazzi data on $U$. Since the operator $S$ and the vector field $T$ are determined in a unique way, the uniqueness follows from Theorem \ref{danieltams09} and the fact that the height function is prescribed.
\end{proof}

\begin{remark}
Assuming $\nu^2<1$ for the converse, it is possible to rewrite Theorem \ref{theo1} changing the gradient $\nabla h$ by vector field $T$ if the function $\nu$ and the a vector field $T$ satisfy
\begin{align*}
  \|\nabla\nu +H T\|^2 &= (H^2-K+c\nu^2)(1-\nu^2), \\
  \Delta \nu &= (2K -c(1+\nu^2)-4H^2)\nu, \\
  \|T\|^2 &= 1-\nu^2,\\
  \divv T &= 2H\nu,\\
  \langle\nabla_T T,JT\rangle&=\langle\nabla_{JT}T,T\rangle.
\end{align*}
This is because the last equation is equivalent to the fact that the $1-$form $\langle T, \cdot\rangle$ is closed and this is a necessary condition to show that $T$ satisfies equation \eqref{A1}.
\end{remark}

\begin{remark}
The minimal case has been studied by the first author in \cite{danielimj15}. He obtained a slightly different result, involving only $\nu$ and equations \eqref{M1} and \eqref{M2}.
\end{remark}

\begin{remark} \label{rmksign}
For a fixed $H$, the pair $(\nu,\nabla h)$ satisfies \eqref{M1}, \eqref{M2}, \eqref{M3} and \eqref{M4} for $H$ if and only if the pair $(-\nu,-\nabla h)$ does for $H$ and the pairs $(\nu,-\nabla h)$ and $(-\nu,\nabla h)$ do for $-H$. The isometric immersions corresponding to the pairs $(\nu,\nabla h)$ and $(-\nu,-\nabla h)$ are the same up to a $\pi$-rotation around a horizontal geodesic of $\M^2_c\times\R$. For the pair $(\nu,-\nabla h)$ (respectively, $(-\nu,\nabla h)$), its correspondent isometric immersion is the same of the isometric immersion correspond to $(\nu,\nabla h)$ up to an isometry of $\M^2_c\times\R$ that preserves the orientation of $\M^2_c$ and reverses the orientation of $\R$ (respectively, reverses the orientation of $\M^2_c$ and preserves the orientation of $\R$), see also \cite[Proposition 3.8]{danieltams09}.
\end{remark}

From Theorem \ref{theo1} follows the next result that characterizes $H$-constant mean surfaces in $\M^2_c\times \R$ with constant angle function. This result was already proved by Espinar and Rosenberg in \cite{RosenbergCMH11}.

Before that, we consider the smooth function $q:\Sigma\to\R$ introduced by Espinar and Rosenberg, in \cite{RosenbergCMH11}. This function $q$ is a normalization of the squared  norm of the Abresch-Rosenberg differential; it will play an important role in the case of $H$-constant mean curvature surface $\Sigma$ in $\M^2_c\times \R$ with constant intrinsic  curvature in the next section.

In \cite[Lemma 2.2]{RosenbergCMH11}, the authors show that
\[\|\nabla\nu\|^2=\frac{4H^2+c-c\nu^2}{4c}(4(H^2-K+c\nu^2)+c(1-\nu^2))-\frac{q}{c}.\]

Combining this relation and Theorem \ref{theo1}, we can see that $q$ satisfies
\begin{equation}\label{definition-q}
q=2Hc\langle \nabla \nu,\nabla h\rangle + 4H^2(H^2-K+c\nu^2) +2H^2c(1-\nu^2)+ \frac{c^2}{4}(1-\nu^2)^2.
\end{equation}

\begin{example}[Horizontal surfaces and vertical cylinders] The simplest examples of $H$-CMC surfaces in $\M^2_c\times\R$ are the horizontal surfaces and the vertical cylinders over curves of constant geodesic curvature.

Given $a\in \R$, then $\M^2_c\times\{a\}$ is a totally geodesic surface in $\M^2_c\times\R$ (then $H=0$) with intrinsic curvature $K=c$. The height function is constant and, since the normal vector $N$ of $\M^2_c\times\{a\}$ is parallel to $\partial_t$ and both are unitary vectors, the angle function $\nu$ satisfies $\nu^2=1$, in particular it is constant.

Let $\gamma\subset\M^2_c$ be a curve with constant geodesic curvature $k$. Then $\gamma\times\R$ is a $(k/2)$-CMC surface in $\M^2_c\times\R$ with intrinsic curvature $K=0$. Since the normal vector $N$ of $\gamma\times\R$ is orthogonal to $\partial_t$, then the angle function vanishes and the gradient of the height function is a principal direction of $\Sigma$ by equation \eqref{C4} and $2H=k$.

\end{example}

\begin{example}[ARL-surfaces] \label{exarlsurf}
Abresch and Rosenberg \cite{abresch2004} classified CMC surfaces with vanishing Abresch-Rosenberg differential. In particular, when $c<0$ they proved that, for each $H$ such that $0<4H^2<-c$, there exists a unique $H$-CMC surface $P_H$ in $\h^2_c\times\R$ invariant by parabolic isometries such that its Abresch-Rosenberg differential vanishes. Moreover, Leite \cite{Leite2007} proved that $P_H$ has constant intrinsic curvature $K=4H^2+c$. We note that, in the limit, $P_H$ is a horizontal surface of $\h^2\times \R$ when $H\to 0$ and a vertical cylinder of $\h^2\times \R$  when $4H^2\to -c$.

In this work, we say that $P_{H}$ is an Abresch-Rosenberg-Leite surface,  abbreviated  ARL-surface. In the Abresch-Rosenberg classification of surfaces with vanishing Abresch-Rosenberg differential, we can also see that ARL-surfaces are the only ones that the angle function $\nu$ is constant with $0<\nu^2<1$. More explicitly, the ARL-surfaces $P_H$ have the following properties:
\begin{itemize}
  \item The constant mean curvature $H$ satisfies $0<4H^2< -c$.
  \item $P_H$ has constant intrinsic curvature $K=4H^2+c$.
  \item The function $q$ vanishes identically on $P_H$.
  \item The function $\nu$ is constant on $P_H$, satisfying  $\nu^2=\frac{4H^2+c}{c}\in(0,1)$.
  \item $P_H$ is foliated by horizontal horocycles of principal curvature $2H$ orthogonally crossed by geodesics in $\h^2_c\times \R$.
\end{itemize}

In \cite{torralbotams12}, Torralbo and Urbano gave a conformal parametrization of the ARL-surfaces, found earlier by Leite in \cite{Leite2007}. Considering the hyperbolic plane model $\big((-\pi/2,\pi/2)\times\R,\frac{1}{-K\cos^2x}(\dif x^2+\dif y^2)\big)$,
in \cite[Example 4]{torralbotams12} it is shown that the height function is given by
\[h(x,y)=\frac{2H}{\sqrt{-K}}(y-\log\cos{x}).\]

For future computations, we consider the following change of coordinates:
\[(x,y)\mapsto z=\frac{i e^{-(y+ix)}}{\sqrt{-K}}.\]

This is a diffeomorphism between $\big(-\pi/2,\pi/2)\times\R$ and $\{z\in\C : \im{z}>0\}$. The metric $\frac{1}{-K\cos^2x}(\dif x^2+\dif y^2)$ is rewrite as
\[\dif s^2=\frac{4}{K(z-\overline{z})^2}|\dif z|^2,\]
and height function in the complex parameter $z$ is given by
\[h(z)=-\frac{2H}{\sqrt{-K}}\log\Big(\frac{\sqrt{-K}}{2i}(z-\overline{z})\Big).\]
\end{example}

\begin{corollary}[\cite{RosenbergCMH11}]\label{corollary1}  Let $\Sigma$ be an $H$-constant mean curvature surface in $\M^2_c\times \R$. If the angle function $\nu: \Sigma\to [-1,1]$ is constant then
\begin{itemize}
  \item either $\nu^2=1$, $K=c$, $H=0$ and $\Sigma$ is part of a horizontal surface $\M_c^2\times \{a\},$ for some $a\in \R$,
  \item or $\nu = 0$, $K=0$ and $\Sigma$ is part of a vertical cylinder $\gamma\times\R$, where $\gamma\subset \M_c^2$ is a curve of geodesic curvature $2H$,

  \item or $0<\nu^2< 1$, $K=4H^2+c <0$ and $\Sigma$ is part of an ARL-surface.
\end{itemize}
\end{corollary}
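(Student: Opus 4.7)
The plan is to feed the assumption that $\nu$ is constant (so $\nabla \nu \equiv 0$ and $\Delta \nu \equiv 0$) into the four equations \eqref{M1}--\eqref{M4} of Theorem \ref{theo1} and then do a trichotomy on the value of $\nu^2$. First, using \eqref{M1} together with \eqref{M3}, the vanishing of $\nabla\nu$ gives
\[
H^2(1-\nu^2) = (H^2 - K + c\nu^2)(1-\nu^2),
\]
so that whenever $\nu^2 \neq 1$ we obtain $K = c\nu^2$.

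In the case $\nu^2 = 1$, equation \eqref{M3} forces $\nabla h \equiv 0$, so $h$ is constant; then \eqref{M4} gives $2H\nu = 0$, whence $H=0$; finally \eqref{M2} reads $0 = (2K - 2c)\nu$, giving $K=c$. Because the height function is (locally) constant, $f(\Sigma)$ lies in a slice $\M_c^2 \times \{a\}$, which is the first bullet. In the case $\nu = 0$, the relation $K = c\nu^2$ gives $K=0$, and $T = \nabla h$ has unit length by \eqref{M3}; since $\partial_t = df(T) + \nu N = df(T)$ is tangent to $\Sigma$, the surface is vertical, hence of the form $\gamma \times \R$ for some curve $\gamma \subset \M_c^2$. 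As $\Sigma$ has constant mean curvature $H$, the curve $\gamma$ has constant geodesic curvature $2H$, giving the second bullet.

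For the remaining case $0 < \nu^2 < 1$, we have both $K = c\nu^2$ and, from \eqref{M2} (using $\nu \neq 0$), the relation $2K - c(1+\nu^2) - 4H^2 = 0$. Solving these two identities simultaneously yields
\[
\nu^2 = \frac{4H^2 + c}{c}, \qquad K = c\nu^2 = 4H^2 + c,
\]
which forces $c < 0$ and $0 < 4H^2 < -c$, and in particular $K < 0$. Substituting $\nabla\nu = 0$, $K = c\nu^2$, and $c(1-\nu^2) = -4H^2$ into the formula \eqref{definition-q} for $q$ collapses it to
\[
q = 4H^4 + 2H^2 \cdot(-4H^2) + \tfrac{1}{4}(-4H^2)^2 = 0,
\]
so the Abresch--Rosenberg differential of $\Sigma$ vanishes identically. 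By the Abresch--Rosenberg classification recalled in Example \ref{exarlsurf}, $\Sigma$ must be (part of) an ARL-surface $P_H$, completing the third bullet.

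The main thing to be careful about is that dividing by $1-\nu^2$ or by $\nu$ is only legitimate away from the degenerate cases, which is why the trichotomy must be set up first; the identification of the surface in the second and third bullets (rather than merely the computation of $K$ and $H$) is the substantive step, and it is handled respectively by the tangency of $\partial_t$ and by appealing to the Abresch--Rosenberg uniqueness theorem via the vanishing of $q$.
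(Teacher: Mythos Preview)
Your proof is correct and follows essentially the same approach as the paper: feed the constancy of $\nu$ into \eqref{M1}--\eqref{M4}, split into the three cases $\nu^2=1$, $\nu=0$, $0<\nu^2<1$, and in the last case verify $q=0$ via \eqref{definition-q} to invoke the Abresch--Rosenberg classification. The only cosmetic differences are the order in which you read off $H=0$ and $K=c$ in the first case and that the paper invokes \eqref{C4} explicitly to identify $T$ as a principal direction in the second case.
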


\begin{proof}
Assume that $\nu^2=1$. Then  \eqref{M2} implies $K=c+2H^2$ and \eqref{M3} implies that $T=0$, that is, the height function $h$ is constant on $\Sigma$, and so $H=0$, by \eqref{M4}. Hence $\Sigma$ is part of a horizontal surface $\M_c^2\times \{a\}$ of curvature $K=c$, for some $a\in \R$.

Assume that $\nu = 0$. Then equation \eqref{M1} implies that $K=0$. On the other hand, $\partial_t= T$, that is, the vertical vector is tangent to $\Sigma$ and it is a principal direction of $\Sigma$, by the equation \eqref{C4}. Then the other principal direction has eigenvalue $2H$, that is, $\Sigma$ is part of a vertical cylinder $\gamma\times \R$, where $\gamma\subset \M_c^2$ has geodesic curvature $2H$.

Assume that $0<\nu^2<1$. Then equations \eqref{M1} and \eqref{M3} imply $K=c\nu^2$. By \eqref{M2}, we obtain the following equation
\begin{equation}\label{eqcor1}
c(1-\nu^2)+4H^2=0.
\end{equation}

Note that there is no solution to \eqref{eqcor1} if $H=0$ or $c>0$. Then we have $c<0$ and $K=4H^2+c$. Moreover $c<K<0$ because $0<\nu^2<1$. Since $\nu$ constant and $K=c\nu^2$, again by equations \eqref{M1} and \eqref{eqcor1} we get that the function $q$ vanishes on $\Sigma$; therefore $\Sigma$ is part of an ARL-surface.
\end{proof}

The following result will be useful to future computations.

\begin{corollary}\label{corollary2}
Let $\Sigma$ be an $H$-constant mean curvature surface in $\M^2_c\times \R$. The function $\phi=\nu+Hh$ is constant if and only if $\Sigma$ is either part of a minimal horizontal surface $\M_c^2\times \{a\}$, for some $a\in \R$, or part of a minimal vertical cylinder $\gamma\times\R$, where $\gamma$ is a geodesic of $\M_c^2$.
\end{corollary}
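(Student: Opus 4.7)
The plan is to use Theorem~\ref{theo1} to convert the constancy of $\phi=\nu+Hh$ into pointwise algebraic relations between $\nu$, $K$, $H$ and $c$, and then invoke Corollary~\ref{corollary1} to identify $\Sigma$. The converse direction is essentially immediate from Corollary~\ref{corollary1}: on a minimal horizontal slice $\M^2_c\times\{a\}$ one has $H=0$ and $|\nu|\equiv 1$, while on a minimal vertical cylinder over a geodesic one has $H=0$ and $\nu\equiv 0$; in both cases $\phi=\nu$ is constant.

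For the forward direction, I would start by computing $\nabla\phi=\nabla\nu+H\nabla h$ and $\Delta\phi=\Delta\nu+H\Delta h$. Assuming $\phi$ is constant, the first expression vanishes, and substituting this into \eqref{M1} gives
\[
(H^2-K+c\nu^2)(1-\nu^2)=0.
\]
Combining $\Delta\phi=0$ with \eqref{M2} and \eqref{M4} yields, after the cancellation $-4H^2+2H^2=-2H^2$,
\[
\bigl(2K-c(1+\nu^2)-2H^2\bigr)\nu=0.
\]
The key observation is that on any open set where $0<\nu^2<1$ both factors must vanish, producing $K=H^2+c\nu^2$ and $2K=c(1+\nu^2)+2H^2$; eliminating $K$ gives $c(\nu^2-1)=0$, which contradicts $c\neq 0$. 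Therefore no such open set exists, and by continuity on a connected $\Sigma$ either $\nu\equiv 0$ or $\nu^2\equiv 1$.

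From here I would invoke Corollary~\ref{corollary1} directly. If $\nu^2\equiv 1$, then the corollary already forces $H=0$ and $\Sigma\subset\M^2_c\times\{a\}$. If $\nu\equiv 0$, then the corollary gives that $\Sigma$ is part of a vertical cylinder over a curve of geodesic curvature $2H$; to extract $H=0$ I would use $\nabla\phi=0$, which together with $\nabla\nu=0$ reduces to $H\nabla h=0$, while \eqref{M3} gives $\|\nabla h\|^2=1\neq 0$, forcing $H=0$ and hence a geodesic base curve. I do not anticipate a genuine obstacle: the only delicate point is the dichotomy step that rules out intermediate values of $\nu^2$, and that is a short algebraic manipulation once the two pointwise equations derived from $\nabla\phi=0$ and $\Delta\phi=0$ are in place.
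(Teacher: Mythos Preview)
Your proposal is correct and follows essentially the same approach as the paper: derive the two pointwise equations $(H^2-K+c\nu^2)(1-\nu^2)=0$ and $(2K-c(1+\nu^2)-2H^2)\nu=0$ from \eqref{M1}, \eqref{M2}, \eqref{M4}, eliminate the open range $0<\nu^2<1$ by an algebraic contradiction, then apply Corollary~\ref{corollary1}. The only cosmetic difference is in the $\nu\equiv 0$ case: the paper reads $H=0$ off the first equation (since $K=0$ there), while you use $H\nabla h=0$ together with $\|\nabla h\|^2=1$; both arguments are valid.
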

\begin{proof}
If $\phi=\nu+Hh$ is a constant function on $\Sigma$ then $(H^2-K+c\nu^2)(1-\nu^2)=0$ and $\nu\big(2K-c(1+\nu^2)-2H^2\big)=0$.  Then from \eqref{M1}, \eqref{M2} and \eqref{M4} we get a system of two algebraic equations on $\nu$:
\begin{align*}
&(H^2-K+c\nu^2)(1-\nu^2)= 0,  \\
&\nu\big(-2(H^2-K+c\nu^2)-c(1-\nu^2)\big)=0.
\end{align*}

At the points where $0<\nu^2<1$, the first equation of the system above implies $H^2-K+c\nu^2=0$; then replacing this in the second one we have $-c(1-\nu^2)=0$, which cannot occur because $c\neq0$. Then $|\nu|$ does not take values in the interval $(0,1)$. Then, by connectedness and continuity, either $\nu^2=1$ or $\nu=0$ and then $\Sigma$ is part of a horizontal surface in $\M^2_c\times\R$ or $\Sigma$ is part of a vertical cylinder $\gamma\times\R$, where $\gamma\subset \M_c^2$, respectively.

For the minimality of $\Sigma$, note that if $\nu^2=1$, Corollary \ref{corollary1} implies that $K=c$ and $H=0$. If $\nu=0$, Corollary \ref{corollary1} implies that $K=0$, and by the first equation of the system above we get $H=0$.

Conversely, if $\Sigma$ is a minimal horizontal surface, then the unit normal $N$ is in the same direction of $\partial_t$. If $\Sigma$ is a minimal vertical cylinder over a geodesic of $\M^2_c$, then the unit normal $N$ is orthogonal to $\partial_t$. Then both cases imply that $\nu$ is constant on $\Sigma$. Since $\Sigma$ is minimal surface, $\phi$ is a constant function on $\Sigma$.
\end{proof}

Let $\Sigma$ be an $H$-constant mean curvature surface in $\M^2_c\times \R$ with  Gauss-Codazzi data $(\dif s^2, S, T, \nu)$. Consider a local orthonormal frame $\{e_1,e_2\}$ on $\Sigma$ such that $Je_1=e_2$ where $J$ is the rotation of angle $\pi/2$ on $\mathrm{T}\Sigma$.

Given a smooth function $f:\Sigma\to\R$ on $\Sigma$, we will set
\[f_i=\langle \nabla f,e_i \rangle
 \text{ \ and \ } f_{ij}=(\nabla^2 f)(e_i,e_j),\]
 where $\nabla^2 f$ is the symmetric Hessian 2-tensor of $f$. Then if $\nu$ and $h$ are the angle function and height function of $\Sigma$, respectively, setting $\phi=\nu+Hh$ we found the following system of two partial differential equations from Theorem \ref{theo1}:
\begin{align}
  \phi_1^2+\phi_2^2 &= -(1-\nu^2)(K-c\nu^2-H^2),\label{N1}  \\
  \phi_{11}+\phi_{22} &= \big(2K -c(1+\nu^2)-2H^2\big)\nu.\label{N2}
\end{align}

\begin{lemma}\label{lemma1}
The function $\phi$ satisfies
\begin{align}\label{E1}
  2\phi_{12}\|\nabla\phi\|^2 &= (1-\nu^2)(6c\nu\phi_1\phi_2-K_1\phi_2-K_2\phi_1) \\
       &\quad +2H\nu(H^2-K-c+2c\nu^2)(h_1\phi_2+h_2\phi_1)\nonumber
\end{align}
and
\begin{align}\label{E2}
     (\phi_{11}-\phi_{22})\|\nabla\phi\|^2 &= (1-\nu^2)\big(3c\nu(\phi_1^2-\phi_2^2)-K_1\phi_1+K_2\phi_2\big) \\
       &\quad +2H\nu(H^2-K-c+2c\nu^2)(h_1\phi_1-h_2\phi_2).\nonumber
\end{align}
\end{lemma}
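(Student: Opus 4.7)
The strategy is to differentiate the zeroth-order identity \eqref{N1} covariantly in each of the frame directions $e_1,e_2$, thereby obtaining two first-order identities that are linear in the Hessian of $\phi$, and then to form two specific linear combinations. In each combination the trace $\phi_{11}+\phi_{22}$ appears and is eliminated by \eqref{N2}, so that only the off-diagonal entry $\phi_{12}$ survives (yielding \eqref{E1}) or only the diagonal difference $\phi_{11}-\phi_{22}$ does (yielding \eqref{E2}). Throughout, the identity $\nu_k = \phi_k - Hh_k$ coming from $\phi=\nu+Hh$ is used to re-express derivatives of $\nu$ in terms of derivatives of $\phi$ and $h$.

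Applying $e_k$ to \eqref{N1} and using the symmetry of the Hessian in the form $e_k\|\nabla\phi\|^2 = 2\phi_{k1}\phi_1+2\phi_{k2}\phi_2$, a direct computation gives, for $k=1,2$,
\[
(\ast_k)\qquad 2\phi_{k1}\phi_1 + 2\phi_{k2}\phi_2 \;=\; -2\nu(H^2-K-c+2c\nu^2)(\phi_k-Hh_k) \;-\; (1-\nu^2)K_k,
\]
once one collects the $\nu\nu_k$ terms via $K-c\nu^2-H^2+c(1-\nu^2) = -(H^2-K-c+2c\nu^2)$ and substitutes $\nu_k=\phi_k-Hh_k$.

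To prove \eqref{E1} I would form $\phi_2(\ast_1)+\phi_1(\ast_2)$. The left-hand side becomes $2\phi_{12}\|\nabla\phi\|^2 + 2(\phi_{11}+\phi_{22})\phi_1\phi_2$. Moving the second summand to the right and substituting $\phi_{11}+\phi_{22}$ from \eqref{N2}, the coefficient of $\nu\phi_1\phi_2$ telescopes:
\[
-2\bigl[2(H^2-K-c+2c\nu^2) + (2K-c(1+\nu^2)-2H^2)\bigr] \;=\; 6c(1-\nu^2),
\]
while the $Hh_k$ and $K_k$ contributions assemble into $2H\nu(H^2-K-c+2c\nu^2)(h_1\phi_2+h_2\phi_1)$ and $-(1-\nu^2)(K_1\phi_2+K_2\phi_1)$, which is exactly \eqref{E1}.

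For \eqref{E2} I would form $\phi_1(\ast_1)-\phi_2(\ast_2)$; the left-hand side is $2\phi_{11}\phi_1^2-2\phi_{22}\phi_2^2$, which I would decompose via the purely algebraic identity
\[
2\phi_{11}\phi_1^2 - 2\phi_{22}\phi_2^2 \;=\; (\phi_{11}-\phi_{22})\|\nabla\phi\|^2 + (\phi_{11}+\phi_{22})(\phi_1^2-\phi_2^2).
\]
Substituting \eqref{N2} for $\phi_{11}+\phi_{22}$ and combining with the $\phi_1^2-\phi_2^2$ piece arising from $\phi_k(\ast_k)$ triggers the same telescoping cancellation, producing $3c\nu(1-\nu^2)(\phi_1^2-\phi_2^2)$, while the $Hh_k$ and $K_k$ contributions now arrange into the antisymmetric combinations $h_1\phi_1-h_2\phi_2$ and $K_1\phi_1-K_2\phi_2$, yielding \eqref{E2}. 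The main obstacle is not conceptual but algebraic bookkeeping: tracking signs when differentiating $(1-\nu^2)(K-c\nu^2-H^2)$, and verifying that the two independently messy coefficients coming from $(\ast_k)$ and from \eqref{N2} really do cancel to leave the clean factor $3c(1-\nu^2)$.
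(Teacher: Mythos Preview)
Your proof is correct and follows essentially the same route as the paper: differentiate \eqref{N1} in each frame direction, form the combinations $\phi_2(\ast_1)+\phi_1(\ast_2)$ and $\phi_1(\ast_1)-\phi_2(\ast_2)$, and eliminate $\phi_{11}+\phi_{22}$ via \eqref{N2}. The only cosmetic difference is that you substitute $\nu_k=\phi_k-Hh_k$ immediately in $(\ast_k)$, whereas the paper keeps $\nu_k$ until after forming the linear combinations.
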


\begin{proof}
Differentiating \eqref{N1} with respect to $e_1$ and  $e_2$, we have

\begin{align}
  2(\phi_1\phi_{11}+\phi_2\phi_{12}) &= 2\big(K-H^2+c(1-2\nu^2)\big)\nu\nu_1-(1-\nu^2)K_1\label{lemma1.1}, \\
  2(\phi_1\phi_{12}+\phi_2\phi_{22}) &= 2\big(K-H^2+c(1-2\nu^2)\big)\nu\nu_2-(1-\nu^2)K_2.\label{lemma1.2}
\end{align}

Making $\phi_2\eqref{lemma1.1}+\phi_1\eqref{lemma1.2}$ and using \eqref{N2} we obtain \eqref{E1}. In an analogous way, making $\phi_1\eqref{lemma1.1}-\phi_2\eqref{lemma1.2}$ we find
\begin{multline*}
  2(\phi^2_1\phi_{11}-\phi^2_2\phi_{22})= 2\big(K-H^2+c(1-2\nu^2)\big)\nu(\phi_1^2-\phi_2^2)-(1-\nu^2)(K_1\phi_1-K_2\phi_2)\\
  \quad +2H\nu\big(K-H^2+c(1-2\nu^2)\big)(\phi_1h_1-\phi_2h_2).
\end{multline*}

Since
\[2(\phi^2_1\phi_{11}-\phi^2_2\phi_{22})=(\Delta \phi)(\phi_1^2-\phi_2^2)+(\phi_{11}-\phi_{22})\|\nabla \phi\|^2,\]
 we get \eqref{E2}, using \eqref{N1}.
\end{proof}

\begin{proposition}\label{Prop.Bochner}
The functions $\nu$ and $h$ satisfy
\begin{gather}
     0 = (H^2-K+c\nu^2)\Delta K+\|\nabla K\|^2-6c\nu\langle\nabla K,\nabla \nu\rangle-2Hc\nu\langle\nabla K,\nabla h\rangle\nonumber\\
       \quad+6Hc(H^2-K-c\nu^2)\langle\nabla \nu,\nabla h\rangle+4H^2c\nu^2(H^2-K-2c+3c\nu^2)\label{M5}\\
       \quad-4(H^2-K+c\nu^2)(K-c-H^2)(K+2c\nu^2).\nonumber
\end{gather}
\end{proposition}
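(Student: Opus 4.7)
The plan is to apply the Bochner–Weitzenböck formula for functions to $\phi = \nu + Hh$ on $\Sigma$. Since $\Sigma$ is two-dimensional with Gaussian curvature $K$, the Ricci tensor equals $K g$ and the formula reads
\begin{equation*}
\tfrac{1}{2}\Delta \|\nabla \phi\|^2 = \|\nabla^2 \phi\|^2 + \langle \nabla \phi, \nabla \Delta\phi\rangle + K\|\nabla\phi\|^2.
\end{equation*}
Every ingredient needed to expand this identity is already in hand: $\|\nabla\phi\|^2$ is given by \eqref{N1}, $\Delta\phi$ by \eqref{N2}, and Lemma \ref{lemma1} encodes the off-diagonal Hessian components.

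From \eqref{N1}, $\|\nabla\phi\|^2 = (1-\nu^2)(H^2-K+c\nu^2)$. Applying the Laplacian via the product rule, and replacing $\Delta\nu$ using \eqref{M2}, produces a linear combination of $\Delta K$, $\|\nabla K\|^2$, $\langle\nabla K,\nabla\nu\rangle$, $\|\nabla\nu\|^2$, and polynomial terms in $\nu$, $K$, $H$, $c$. The quantity $\|\nabla\nu\|^2$ can in turn be eliminated by combining \eqref{M1} with \eqref{M3}, which expresses it in terms of $\langle\nabla\nu,\nabla h\rangle$, $\nu$, $K$ and the constants. For $\langle\nabla\phi,\nabla\Delta\phi\rangle$, I would differentiate \eqref{N2} to obtain $\nabla\Delta\phi$ as a combination of $\nu\nabla K$ and $\nabla\nu$, then pair with $\nabla\phi = \nabla\nu + H\nabla h$; this produces precisely the $\langle\nabla K,\nabla\nu\rangle$, $\langle\nabla K,\nabla h\rangle$ and $\langle\nabla\nu,\nabla h\rangle$ contributions that appear in \eqref{M5}.

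For the Hessian norm I would use the pointwise orthogonal decomposition
\begin{equation*}
\|\nabla^2\phi\|^2 = \tfrac{1}{2}(\phi_{11}+\phi_{22})^2 + \tfrac{1}{2}(\phi_{11}-\phi_{22})^2 + 2\phi_{12}^2
\end{equation*}
in a local orthonormal frame $\{e_1,e_2\}$. The first term equals $\tfrac{1}{2}(\Delta\phi)^2$ and is given directly by \eqref{N2}. For the remaining two I would work with $\|\nabla\phi\|^4\|\nabla^2\phi\|^2$: by Lemma \ref{lemma1}, the squared right-hand sides of \eqref{E1} and \eqref{E2} are exactly $4\phi_{12}^2\|\nabla\phi\|^4$ and $(\phi_{11}-\phi_{22})^2\|\nabla\phi\|^4$, again in terms of $\nu$, $h$, $K$ and their gradients. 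Multiplying the whole Bochner identity by $\|\nabla\phi\|^2 = (1-\nu^2)(H^2-K+c\nu^2)$ clears all denominators, with the understanding that we first work on the open set where $\|\nabla\phi\|>0$ and extend by continuity (the locus $\|\nabla\phi\|=0$ is described by Corollary \ref{corollary2}, where \eqref{M5} can anyway be checked directly).

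The final step is a bulk algebraic consolidation: after the substitutions above, both sides are polynomials in $\nu,K,H,c$ together with $\nabla K$, $\nabla\nu$, $\nabla h$, and extensive cancellation must take place to reduce them to the compact form \eqref{M5}. The main obstacle is precisely this bookkeeping rather than any conceptual new ingredient—the intermediate expressions contain dozens of monomials and the miraculous collapse onto the triple-product $-4(H^2-K+c\nu^2)(K-c-H^2)(K+2c\nu^2)$ and the quadratic $4H^2c\nu^2(H^2-K-2c+3c\nu^2)$ is not transparent a priori, so the challenge is to organize the computation so that errors do not accumulate.
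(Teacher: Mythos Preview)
Your proposal is correct and follows essentially the same route as the paper: apply the Weitzenb\"ock--Bochner formula to $\phi=\nu+Hh$, use the decomposition $\|\nabla^2\phi\|^2=\tfrac12(\Delta\phi)^2+\tfrac12(\phi_{11}-\phi_{22})^2+2\phi_{12}^2$, feed in Lemma~\ref{lemma1} for the trace-free part, and clear denominators by $\|\nabla\phi\|^2=(1-\nu^2)(H^2-K+c\nu^2)$, handling the degenerate locus via Corollary~\ref{corollary2} and analyticity. One small slip: $\|\nabla K\|^2$ does not arise from $\tfrac12\Delta\|\nabla\phi\|^2$ but from squaring the right-hand sides of \eqref{E1}--\eqref{E2} in the Hessian term; this does not affect the argument.
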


\begin{proof}
If $\phi$ is constant on a non empty open set of $\Sigma$, then by analyticity $\phi$ is constant on $\Sigma$. By Corollary \ref{corollary2}, $\nu$ and $K$ are constant functions on $\Sigma$, $H=0$ and $H^2-K+c\nu^2=0$, so \eqref{M5} holds.

From now on, we assume that $\phi$ is not a constant function on $\Sigma$. By analyticity, it is sufficient to prove equation \eqref{M5} on a non empty set $U\subset \Sigma$ on which $\nabla \phi$ does not vanish. Restricting $U$ if necessary, we can consider an orthonormal frame $\{e_1,e_2\}$ on $U$ and assume all the previous notations. The classical Weitzenböck-Bochner formula reads as
\begin{equation*}\label{bochner}
  \frac{1}{2}\Delta\|\nabla\phi\|^2=\langle\nabla\phi,\nabla\Delta \phi\rangle+\|\nabla^2\phi\|^2+K\|\nabla \phi\|^2.
\end{equation*}
Note that the Hessian term can be written
\[\|\nabla^2 \phi\|^2=\frac{1}{2}(\Delta \phi)^2+\frac{1}{2}(\phi_{11}-\phi_{22})^2+2\phi_{12}^2.\]

Then by Lemma \ref{lemma1} we have
\begin{align*}
2\|\nabla \phi\|^4\|\nabla^2\phi\|^2 &= \|\nabla \phi\|^4(\Delta\phi)^2\\
&\quad+(1-\nu^2)^2\|\nabla \phi\|^2\Big\{9c^2\nu^2\|\nabla \phi\|^2+\|\nabla K\|^2-6c\nu\langle\nabla K,\nabla \phi\rangle\Big\}\\
&\quad-4H\nu(1-\nu^2)\|\nabla \phi\|^2(H^2-K-c+2c\nu^2)\langle\nabla K,\nabla h\rangle\\
&\quad+12cH\nu^2(1-\nu^2)\|\nabla \phi\|^2(H^2-K-c+2c\nu^2)\langle\nabla \phi,\nabla h\rangle\\
&\quad+4H^2\nu^2(1-\nu^2)\|\nabla \phi\|^2(H^2-K-c+2c\nu^2)^2.
\end{align*}

Dividing the expression above by $(1-\nu^2)\|\nabla\phi\|^2$ and using \eqref{M4} we get
\begin{align}\label{WB-1}
2(H^2-K+c\nu^2)\|\nabla^2\phi\|^2 &= (H^2-K+c\nu^2)(\Delta\phi)^2\nonumber\\
&\quad+(1-\nu^2)\Big\{9c^2\nu^2\|\nabla \phi\|^2+\|\nabla K\|^2-6c\nu\langle\nabla K,\nabla \phi\rangle\Big\}\nonumber\\
&\quad-4H\nu(H^2-K-c+2c\nu^2)\langle\nabla K,\nabla h\rangle\\
&\quad+12cH\nu^2(H^2-K-c+2c\nu^2)\langle\nabla \nu,\nabla h\rangle\nonumber\\
&\quad+4H^2\nu^2(H^2-K-c+2c\nu^2)(H^2-K-c\nu^2+2c).\nonumber
\end{align}

By equation \eqref{N1}, and since $\Delta\nu^2=2(\nu\Delta\nu+\|\nabla\nu\|^2)$, we have
\begin{align*}
     -\frac{1}{2}\Delta\|\nabla \phi\|^2 &=\frac{1}{2}(1-\nu^2)\Delta K-2\nu\langle \nabla K,\nabla\nu\rangle\\
   &\quad+(H^2-K-c+2c\nu^2)\nu\Delta\nu+(H^2-K-c+6c\nu^2)\|\nabla\nu\|^2,
\end{align*}
and by equation \eqref{N2}
\begin{align*}
  \langle\nabla\phi,\nabla\Delta\phi\rangle &= 2\nu\langle\nabla K,\nabla\nu\rangle+2H\nu\langle\nabla K,\nabla h\rangle+(-2H^2+2K-c-3c\nu^2)\|\nabla\nu\|^2\\
  &\quad+H(-2H^2+2K-c-3c\nu^2)\langle\nabla\nu,\nabla h\rangle.
\end{align*}

Since equation \eqref{N1} implies $K\|\nabla\phi\|^2=K(H^2-K+c\nu^2)(1-\nu^2)$ and equation \eqref{M1} implies $\|\nabla\nu\|^2=(-K+c\nu^2)(1-\nu^2)-2H\langle\nabla\nu,\nabla h\rangle$, we get
\begin{align*}
  -\frac{1}{2}\Delta\|\nabla \phi\|^2+ \langle\nabla\phi,\nabla\Delta\phi\rangle+K\|\nabla\phi\|^2&=\frac{1}{2}(1-\nu^2)\Delta K+(H^2-K-c+2c\nu^2)\nu\Delta\nu \\
  &\quad+(-H^2+K-2c+3c\nu^2)(-K+c\nu^2)(1-\nu^2)\\
  &\quad+3Hc(1-3\nu^2)\langle\nabla\nu,\nabla h\rangle+2H\nu\langle\nabla K,\nabla h\rangle\\
  &\quad+K(H^2-K+c\nu^2)(1-\nu^2).
\end{align*}

Multiplying the expression above by $2(H^2-K+c\nu^2)$ and using \eqref{N1}, the Weitzenb\"ock-Bochner formula implies that
\begin{gather*}
0 =(1-\nu^2)\Big\{(H^2-K+c\nu^2)\Delta K+\|\nabla K\|^2-6c\nu\langle\nabla K,\nabla \phi\rangle+4Hc\nu\langle\nabla K,\nabla h\rangle\Big\}\\
  \quad +6Hc(1-\nu^2)(H^2-K-c\nu^2)\langle\nabla\nu,\nabla h\rangle+\|\nabla\phi\|^2\Big\{9c^2\nu^2(1-\nu^2)\\
  \quad+2(-K+c\nu^2)(-H^2+K-2c+3c\nu^2)+2K(H^2-K+c\nu^2) \Big\}\\
  \quad +(H^2-K+c\nu^2)(\Delta \phi)^2+2(H^2-K+c\nu^2)(H^2-K-c+2c\nu^2)\nu\Delta\nu\\
  \quad +4H^2\nu^2(H^2-K-c+2c\nu^2)(H^2-K-c\nu^2+2c).
\end{gather*}

Since $\nabla \phi=\nabla\nu +H\nabla h$, observing that $(H^2-K-c\nu^2+2c)=(H^2-K+c\nu^2)+2c(1-\nu^2)$, we get
\begin{gather*}
0 =(1-\nu^2)\Big\{(H^2-K+c\nu^2)\Delta K+\|\nabla K\|^2-6c\nu\langle\nabla K,\nabla \nu\rangle-2Hc\nu\langle\nabla K,\nabla h\rangle\Big\}\\
  +6Hc(1-\nu^2)(H^2-K-c\nu^2)\langle\nabla\nu,\nabla h\rangle+\|\nabla\phi\|^2\Big\{9c^2\nu^2(1-\nu^2)\\
  \quad+2(-K+c\nu^2)(-H^2+K-2c+3c\nu^2)+2K(H^2-K+c\nu^2)\\
  \quad +3c\nu^2(2H^2-2K+c+2c\nu^2) \Big\}+8H^2c\nu^2(1-\nu^2)(H^2-K-c+2c\nu^2).
\end{gather*}

Dividing by $(1-\nu^2)$ and using \eqref{N1} we get equation \eqref{M5}.
\end{proof}

\section{CMC surfaces with constant intrinsic  curvature}\label{section3}

Minimal surfaces in $\M^2_c\times\R$ with constant intrinsic curvature were classified in \cite[Theorem 4.2]{danielimj15}: such a surface is either totally geodesic or part of an associate surface of the parabolic generalized catenoid (a certain limit of catenoids). Regarding the non minimal case,
Corollary \ref{corollary1} provides some examples of $H$-constant mean curvature surfaces in $\M^2_c\times\R$ with constant intrinsic curvature, with $H\neq 0$. We will see next that these are not the only ones in $\h^2_c\times\R$.

This new example is based on the work \cite{saearp2005} by Sa Earp and Toubiana, where they study $H$-constant mean curvature screw motion surfaces in $\h^2\times\R$ and $\s^2\times\R$.

\begin{example}[Helicoidal surfaces in $\h^2_c\times\R$ satisfying $K=4H^2+c<0$]\label{helicoid} Up to scaling, suppose that $c=-1$. Consider the Poincar\'e disk model for $\h^2$. Let $K$ and $H$ be real numbers such that $H\neq 0$ and $K=4H^2-1<0$. Let $X:\R^2\to\h^2\times\R$ be the screw motion immersion given by
\begin{equation}\label{screwmotion-X}
X(\sigma,\tau)=\Big(\tanh\frac{\rho(\sigma)}{2} \cos\varphi(\sigma,\tau),\tanh\frac{\rho(\sigma)}{2}\sin\varphi(\sigma,\tau),\lambda(\sigma)+\varphi(\sigma,\tau)\Big),
\end{equation}
where the functions $\rho$, $\lambda$ and $\varphi$ are defined as
\begin{align*}
\rho(\sigma)&=\arccosh\Big(\frac{\cosh(\sqrt{-K}\sigma)}{\sqrt{-K}}\Big),\\
\lambda(\sigma)&= 2H\sigma+\arctan\Big(\frac{e^{2\sqrt{-K}\sigma}+2K+1}{4H\sqrt{-K}}\Big),\\
\varphi(\sigma,\tau)&=\sqrt{-K}\tau-\arctan\Big(\frac{e^{2\sqrt{-K}\sigma}+2K+1}{4H\sqrt{-K}}\Big).
\end{align*}

The height function is
\begin{align}
h(\sigma,\tau) &=\lambda(\sigma)+ \varphi(\sigma,\tau)\nonumber\\
  &=2H\sigma+\sqrt{-K}\tau.\label{h-helicoid}
\end{align}

We compute
\[\rho'(\sigma)=\frac{\sqrt{-K}\sinh(\sqrt{-K}\sigma)}{\sqrt{\cosh^2(\sqrt{-K}\sigma)+K}}\]
and
\[\lambda'(\sigma)=2H\frac{\cosh^2(\sqrt{-K\sigma})}{\cosh^2(\sqrt{-K\sigma})+K}.\]

By \cite[Proposition 9]{saearp2005} with $l=1$ we have that, choosing the appropiate orientation, the angle function is
\begin{align}
  \nu(\sigma) &= \frac{\sinh\rho(\sigma)}{\sqrt{1+\sinh^2\rho(\sigma)+(\lambda'(\sigma)/\rho'(\sigma))^2\sinh^2\rho(\sigma)}}\nonumber \\
   &= \sqrt{-K}\tanh(\sqrt{-K}\sigma)\label{nu-helicoid},
\end{align}
and we also compute
\[\frac{(\lambda'(\sigma)/\rho'(\sigma))\sinh^2\rho(\sigma)}{\sqrt{1+\sinh^2\rho(\sigma)+(\lambda'(\sigma)/\rho'(\sigma))^2\sinh^2\rho(\sigma)}}
=\frac{2H}{\sqrt{-K}}\cosh(\sqrt{-K}\sigma).\]

Hence the first formula of \cite[Lemma 11]{saearp2005} implies that $X(\R^2)$ has constant mean curvature $H$. Moreover, we see that equation $(\ast)$ in that lemma is satisfied for $d=0$ (see Figures 11 and 12 in \cite[Theorem 17]{saearp2005} for pictures of this surface).

Next, a straightforward computation shows that the induced metric $\dif s^2$ on $X(\R^2)$ is
\begin{equation}
\dif s^2=\dif\sigma^2+\cosh^2(\sqrt{-K}\sigma)\dif\tau^2\label{metric-helicoid}.
\end{equation}
By standard arguments, we obtain that this metric is complete and has intrinsic curvature $K$. Note that this surface when $H>0$ is the surface obtained in Theorem 19 in \cite{saearp2005} for $a=\sqrt{-K}$, $m=1/a$, $l=1$ and $U(\sigma)=\cosh(a\sigma)$.

Therefore, given $H\in\R$ satisfying $0<4H^2<1$, there is an $H$-constant mean curvature isometric immersion of $\R^2$ endowed with the metric \eqref{metric-helicoid} into $\h^2\times \R$, with constant intrinsic curvature $K=4H^2-1$, such that the angle and height functions are given by \eqref{nu-helicoid} and \eqref{h-helicoid}, respectively.

To find conformal coordinates, we consider the following change of coordinates on $\R^2$ for a complex parameter $z$:
\[(\sigma,\tau)\mapsto z=e^{-\sqrt{-K}\tau}\Big(\tanh(\sqrt{-K}\sigma)+i \sech(\sqrt{-K}\sigma)\Big).\]

This is a diffeomorphism between $\R^2$ and $\{z\in\C : \im{z}>0\}$. The metric \eqref{metric-helicoid} reads as
\[\dif s^2=\frac{4}{K(z-\overline{z})^2}|\dif z|^2.\]

We compute the angle and height functions above, \eqref{nu-helicoid} and \eqref{h-helicoid}, in the complex parameter by
\[\nu(z)=
\frac{\sqrt{-K}}{2|z|}(z+\overline{z})
\]
and
\[h(z)=\frac{2H}{\sqrt{-K}}\arcsinh\frac{i(z+\overline{z})}{z-\overline{z}}-\log|z|.\]
\end{example}

Let $\Sigma$ be an $H$-constant mean curvature surface in $\M^2_c\times \R$; we recall the smooth function $q:\Sigma\to\R_+$ defined on $\Sigma$ as before by
\[q=2Hc\langle \nabla \nu,\nabla h\rangle + 4H^2(H^2-K+c\nu^2) +2H^2c(1-\nu^2)+ \frac{c^2}{4}(1-\nu^2)^2.\]

Since the Abresch-Rosenberg differential is holomorphic, $q$ either has isolated zeroes or vanishes identically. Moreover, away from its zeroes, it is proved in \cite{RosenbergCMH11} that the function $q$ satisfies the following equation
\[\Delta \log q = 4K\]
i.e.,
\begin{equation}\label{Delta.log.q}
4Kq^2=q\Delta q-\|\nabla q\|^2.
\end{equation}
Therefore, this equation holds by continuity on the isolated zeroes of $q$, and also when $q$ vanishes identically.

If $H\neq 0$, when $\Sigma$ has constant mean curvature $H$ and also constant intrinsic  curvature $K$, Proposition \ref{Prop.Bochner} reads as
\[2Hcr\langle \nabla \nu,\nabla h\rangle+W=0\]
where
\[W=4H^2c\nu^2(H^2-K-2c+3c\nu^2)-4(H^2-K+c\nu^2)(K-c-H^2)(K+2c\nu^2)\]
and
\[r=3(H^2-K-c\nu^2).\]

Note that the condition $r= 0$ implies that $\nu$ is a constant function on $\Sigma$ and then $\Sigma$ is characterized by Corollary \ref{corollary1}. Then restricting in an open set of $\Sigma$ where $r\neq 0$, by Proposition \ref{Prop.Bochner} we can written the function $q$ as
\[q=\frac{p}{r}\]
where $p$ is defined by
\[p=-W+ r\Big(4H^2(H^2-K+c\nu^2) +2H^2c(1-\nu^2)+\frac{c^2}{4}(1-\nu^2)^2\Big).\]

Since $K$, $H$ and $c$ are constants, the functions $W$, $p$ and $r$ are polynomials of $\nu$ on $\Sigma$. In the next lemma, we transform equation \eqref{Delta.log.q}.

\begin{lemma}\label{Delta.log.beta}
Let $H\neq 0$, $\Sigma$ be an $H$-constant mean curvature surface in $\M^2_c\times \R$ with constant intrinsic  curvature $K$ and $U\subseteq \Sigma$ be an open set on which $r\neq 0$. If $K\neq 4H^2+c$, there is an even polynomial $g$ with degree $18$, such that $g\circ\nu=0$ on $U$.\break
\end{lemma}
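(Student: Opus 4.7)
The plan is to reduce equation \eqref{Delta.log.q}, $4Kq^2=q\Delta q-\|\nabla q\|^2$, to a polynomial identity in $\nu$, by observing that on $U$ every quantity involved is a rational function of $\nu$ with coefficients depending only on the constants $H$, $K$, $c$.

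First, since $K$ is constant, the conclusion of Proposition \ref{Prop.Bochner} becomes $2Hcr\langle\nabla\nu,\nabla h\rangle+W=0$, giving the rational expression
\[\langle\nabla\nu,\nabla h\rangle=-\frac{W}{2Hcr}\]
on $U$. Substituting into \eqref{definition-q} yields $q=p/r$ with $p$ and $r$ explicit even polynomials in $\nu$ of degrees $6$ and $2$ respectively. Combining \eqref{M1} and \eqref{M3} gives $\|\nabla\nu\|^2=(c\nu^2-K)(1-\nu^2)+W/(cr)$, an even rational function of $\nu$; and from \eqref{M2}, $\Delta\nu=(2K-c(1+\nu^2)-4H^2)\nu$ is an odd polynomial in $\nu$.

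Writing $Q(\nu)=p(\nu)/r(\nu)$, the identity $q=Q(\nu)$ on $U$ gives $\nabla q=Q'(\nu)\nabla\nu$ and $\Delta q=Q''(\nu)\|\nabla\nu\|^2+Q'(\nu)\Delta\nu$. Substituting into \eqref{Delta.log.q} and clearing the common denominator $cr^5$ produces a polynomial identity
\[g(\nu)=crBA+BW+cr^{2}pA_1\Delta\nu-4Kcr^{3}p^{2}=0\]
on $U$, where $A=(c\nu^2-K)(1-\nu^2)$, $A_1=p'r-pr'$, $A_2=r(p''r-pr'')-2r'A_1$, and $B=pA_2-A_1^2$. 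A parity inspection---$p$, $r$, $W$, $A$, $B$ are even in $\nu$ while $A_1$ and $\Delta\nu$ are odd---shows every summand is even, so $g$ itself is an even polynomial. The summands $crBA$ and $cr^{2}pA_1\Delta\nu$ are a priori of degree $20$, but a direct computation of their leading $\nu^{20}$ coefficients shows that they cancel against each other; combined with evenness this forces $\deg g\le 18$.

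The main obstacle is showing that the coefficient of $\nu^{18}$ in $g$ is not zero under the hypothesis $K\neq 4H^2+c$. This boils down to a constant-coefficient computation using the leading terms of $p$, $r$, $W$, $A_1$, $A_2$, $B$; I expect this top coefficient to factor, up to a nonzero numerical multiple involving $c$ and $H$, through $K-4H^2-c$, consistently with the fact that the excluded case $K=4H^2+c$ corresponds precisely to the ARL-surface situation in which $q\equiv0$ and the identity \eqref{Delta.log.q} trivialises. Consequently, when $K\neq 4H^2+c$, the polynomial $g$ is even of degree exactly $18$ and satisfies $g\circ\nu=0$ on $U$, proving the lemma.
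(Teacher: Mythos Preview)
Your overall strategy is the same as the paper's: write $q=p/r$ with $p,r$ even polynomials in $\nu$, plug into $4Kq^2=q\Delta q-\|\nabla q\|^2$, clear denominators, and exploit the closed forms for $\|\nabla\nu\|^2$ and $\Delta\nu$ coming from \eqref{M1}, \eqref{M2} and Proposition~\ref{Prop.Bochner} to obtain an even polynomial relation $g(\nu)=0$. Your $Q'$, $Q''$ bookkeeping with $A_1,A_2,B$ is equivalent to the paper's computation of $r\Delta p$, $r\|\nabla p\|^2$, $r\Delta r$, $r\|\nabla r\|^2$; the resulting degree-$20$ polynomial is the same object as the paper's equation~\eqref{M6} (up to a harmless constant factor).

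The genuine gap is the degree-$18$ coefficient. You yourself flag it as ``the main obstacle'' and then write ``I expect this top coefficient to factor, up to a nonzero numerical multiple \dots, through $K-4H^2-c$''. That expectation is correct, but it is exactly the content of the lemma: the paper carries out the explicit computation and obtains
\[
[\nu^{18}]\,g \;=\; -486\,c^{9}\,(4H^2+c-K),
\]
which is what forces $\deg g=18$ precisely when $K\neq 4H^2+c$. Without actually performing this coefficient computation (and, to a lesser extent, the degree-$20$ cancellation, which you assert but do not display), the argument is incomplete. Your heuristic justification---that $K=4H^2+c$ is ``the ARL-surface situation in which $q\equiv 0$''---is also not quite right: in that case the helicoidal surfaces of Example~\ref{helicoid} occur as well, with $q$ not identically zero, so one cannot infer the factorisation of the top coefficient from the trivialisation of \eqref{Delta.log.q} alone. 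The computation really has to be done.
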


\begin{proof}
In the open set $U\subseteq \Sigma$, since $rq=p$ we have that
\begin{align*}
r^4q\Delta q&= r^2p\Delta p-p^2r\Delta r -2p\langle r\nabla p -p\nabla r,\nabla r\rangle,\\
r^4\|\nabla q\|^2 &= r^2\|\nabla p\|^2-2p\langle r\nabla p - p\nabla r,\nabla r\rangle-p^2\|\nabla r\|^2.
\end{align*}

Subtracting the equations above, multiplying by $r$ and using equation \eqref{Delta.log.q} we get
\begin{equation}\label{M6}
4Kp^2r^3-p r^2(r\Delta p)+r^2(r\|\nabla p\|^2)-p^2(r\|\nabla r\|^2)+p^2r(r\Delta r)=0.\\
\end{equation}

Note that the quantities between parentheses in $\eqref{M6}$ are all polynomial of $\nu$. In fact, if $f\in \mathcal{C}^\infty (\Sigma)$ is an even polynomial of $\nu$ of degree at most 6, given by
\[f= f_0+f_2\nu^2+f_4\nu^4+f_6\nu^6,\]
where $f_i$ are constants, we have that
\begin{align*}
\|\nabla f\|^2  &=\nu^2(2f_2+4f_4\nu^2+6f_6\nu^4)^2\|\nabla\nu\|^2\\
\Delta f &=f_2\Delta\nu^2+f_4\Delta\nu^4+f_6\Delta\nu^6\\
&=(2f_2+4f_4\nu^2+6f_6\nu^4)\nu\Delta\nu+(2f_2+12f_4\nu^2+30f_6\nu^4)\|\nabla \nu\|^2.
\end{align*}
On the other hand, since $2Hcr\langle \nabla \nu,\nabla h\rangle+W=0$, by equation \eqref{M1} we have
\begin{align*}
 r\|\nabla\nu\|^2 &= r(-K+c\nu^2)(1-\nu^2)+\frac{1}{c}W\\
 &= \Big[-Kr_0+\frac{1}{c}W_0\Big]
 +\Big[(K+c)r_0-Kr_2+\frac{1}{c}W_2\Big]\nu^2\\
 &\quad+\Big[-cr_0+(K+c)r_2+\frac{1}{c}W_4\Big]\nu^4-cr_2\nu^6
\end{align*}
and
\[\nu\Delta\nu=(2K-4H^2-c)\nu^2-c\nu^4.\]

With these expressions we can see that equation \eqref{M6} has degree at most 20 in $\nu$, and now we proceeds to compute its coefficients of degrees 20 and 18.

For the term $4Kp^2r^3$, we have
\begin{align*}
  (4Kp^2r^3)_{20} &= 0, \\
  (4Kp^2r^3)_{18} &= 4Kp_6^2r_2^3.
\end{align*}
\indent\par
For the term $-p r^2(r\Delta p)$, we have
\begin{align*}
(-p r^2(r\Delta p))_{20}=& -6p_6^2r_2^3(\nu\Delta\nu)_4-30p_6^2r_2^2(r\|\nabla\nu\|^2)_6, \\
(-p r^2(r\Delta p))_{18}=&
-p_6r_2^2\big(6p_6r_0(\nu\Delta\nu)_4+4p_4r_2(\nu\Delta\nu)_4+6p_6r_2(\nu\Delta\nu)_2\\
&+12p_4(r\|\nabla\nu\|^2)_6+30p_6(r\|\nabla\nu\|^2)_4\big)\\
&-2p_6r_0r_2\big(6p_6r_2(\nu\Delta\nu)_4+30p_6(r\|\nabla\nu\|^2)_6\big)\\
&-p_4r^2_2\big(6p_6r_2(\nu\Delta\nu)_4+30p_6(r\|\nabla\nu\|^2)_6\big).
\end{align*}
\indent\par
For the term $r^2(r\|\nabla p\|^2)$, we have
\begin{align*}
(r^2(r\|\nabla p\|^2))_{20}&= 36p_6^2r_2^2 (r\|\nabla\nu\|^2)_6,\\
(r^2(r\|\nabla p\|^2))_{18}&= 72p_6^2r_0r_2 (r\|\nabla\nu\|^2)_6+r_2^2\big(36p_6^2 (r\|\nabla\nu\|^2)_4+48p_4p_6r_2^2 (r\|\nabla\nu\|^2)_6\big).
\end{align*}
\indent\par
For the term $-p^2(r\|\nabla r\|^2)$, we have
\begin{align*}
(-p^2(r\|\nabla r\|^2))_{20}=&-4p_6^2r_2^2(r\|\nabla\nu\|^2)_6,\\
(-p^2(r\|\nabla r\|^2))_{18}=&-4p_6^2r_2^2(r\|\nabla\nu\|^2)_4-8p_4p_6r_2^2(r\|\nabla\nu\|^2)_6.
\end{align*}
\indent\par
And finally, for the term $p^2r(r\Delta r)$, we have
\begin{align*}
(p^2r(r\Delta r))_{20} &= p_6^2r_2\big(2r_2^2(\nu\Delta\nu)_4+2r_2(r\|\nabla\nu\|^2)_6\big)\\
(p^2r(r\Delta r))_{18} &= p_6^2r_2\big(2r_0r_2(\nu\Delta\nu)_4+2r_2^2(\nu\Delta\nu)_2+2r_2(r\|\nabla\nu\|^2)_4\big)\\
&\quad  +p_6^2r_0\big(2r_2^2(\nu\Delta\nu)_4+2r_2(r\|\nabla\nu\|^2)_6\big)\\
&\quad  +2p_4p_6r_2\big(2r_2^2(\nu\Delta\nu)_4+2r_2(r\|\nabla\nu\|^2)_6\big).
\end{align*}
\indent\par
Summing all these terms of order 20 and 18, we get respectively
\[\eqref{M6}_{20}=4p_6^2r_2^2\big((r\|\nabla\nu\|^2)_6-r_2(\nu\Delta\nu)_4\big),\]
and
\begin{multline*}
\eqref{M6}_{18}=4Kp_6^2r_2^3-14p_6^2r_0r_2^2(\nu\Delta\nu)_4-6p_4p_6r_2^3(\nu\Delta\nu)_4-4p_6^2r_2^3(\nu\Delta\nu)_2\\
+14p_6^2r_0r_2(r\|\nabla\nu\|^2)_6+2p_4p_6r_2^2(r\|\nabla\nu\|^2)_6+4p_6^2r_2^2(r\|\nabla\nu\|^2)_4.
\end{multline*}

Since $p_4=-c^2(101H^2 - 29K + 26c)/4$, $p_6=-3c^3/4$, $r_0=3(H^2 - K)$, $r_2=-3c$, $(r\|\nabla\nu\|^2)_4=c(17H^2 - 8K + 5c)$, $(r\|\nabla\nu\|^2)_6=-cr_2$, $(\nu\Delta\nu)_2=- 4H^2 + 2K - c$ and $(\nu\Delta\nu)_4=-c$,  we have that $\eqref{M6}_{20}=0$ and \[\eqref{M6}_{18}=-486c^9(4H^2+c-K).\]

If $K\neq4H^2+c$ then \eqref{M6} implies that exists an even polynomial g of degree 18, such that $g\circ \nu=0$ on $U$.
\end{proof}

\begin{theorem}\label{theo-Kconstante} Let $H\neq 0$ and $\Sigma$ be an $H$-constant mean curvature surface in $\M^2_c\times \R$ with constant intrinsic  curvature $K$. Then one of the following holds:

\begin{itemize}
  \item either $K=0$ and $\Sigma$ is part of a vertical cylinder $\gamma\times\R$, where $\gamma\subset \M_c^2$ is a curve of geodesic curvature $2H$,
  \item or $c<0$, $K=4H^2+c<0$ and $\Sigma$ is part of either an ARL-surface or a surface of Example \ref{helicoid}.
\end{itemize}
\end{theorem}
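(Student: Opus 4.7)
The plan is a case analysis on whether the angle function $\nu$ is constant. If $\nu$ is constant, Corollary \ref{corollary1} together with the assumption $H\neq 0$ immediately yields either the vertical cylinder (when $\nu=0$, $K=0$) or, if $0<\nu^2<1$, the ARL-surface with $K=4H^2+c<0$; the case $\nu^2=1$ is excluded since it would force $H=0$. From now on we assume $\nu$ is non-constant and aim at the second alternative of the theorem.

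The first step is to pin down the value of $K$. By real-analyticity of $\Sigma$, non-constancy of $\nu$ implies that the function $r=3(H^2-K-c\nu^2)$ is not identically zero; otherwise $\nu^2=(H^2-K)/c$ would be constant. On the non-empty open set $\{r\neq 0\}$, Lemma \ref{Delta.log.beta} asserts that, if $K\neq 4H^2+c$, then $\nu$ satisfies a non-trivial even polynomial equation of degree $18$ with coefficients depending on $H$, $K$, $c$. Such a polynomial has only finitely many roots, so $\nu$ takes only finitely many values on the open set, hence is locally constant; real-analyticity then propagates this to all of $\Sigma$, contradicting our assumption. Therefore $K=4H^2+c$.

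Substituting $K=4H^2+c$ in \eqref{M1} yields $\|\nabla\nu+H\nabla h\|^2 = -(3H^2+c(1-\nu^2))(1-\nu^2)\geq 0$. Since $\{\nu^2<1\}$ is open and dense (the subset $\{\nu^2=1\}$ being incompatible with $H\neq 0$ by the argument of Corollary \ref{corollary1}), the sign condition $3H^2+c(1-\nu^2)\leq 0$ there forces $c<0$ and the uniform bound $\nu^2\leq 1+3H^2/c<1$. Testing \eqref{M2}, rewritten as $\Delta\nu=(K-c\nu^2)\nu$, at extrema of $\nu$ then gives $K<0$. Consequently $\Sigma$ carries the hyperbolic metric of curvature $K=4H^2+c<0$, and we may use the upper half-plane model $\dif s^2 = \tfrac{4}{K(z-\overline{z})^2}|\dif z|^2$. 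With $\nabla K=0$, Proposition \ref{Prop.Bochner} collapses to the algebraic relation $2Hcr\langle\nabla\nu,\nabla h\rangle+W=0$, which together with \eqref{M1}, \eqref{M3}, \eqref{M4} determines $\|\nabla\nu\|^2$ as an explicit polynomial in $\nu$. The level sets of $\nu$ thus foliate $\Sigma$, and in adapted coordinates the system \eqref{M1}--\eqref{M4} reduces to a pair of ODEs whose integration yields, up to horizontal isometries of $\M^2_c\times\R$ and the symmetries of Remark \ref{rmksign}, the angle and height functions of the helicoidal surface of Example \ref{helicoid}. The uniqueness in Theorem \ref{theo1} then identifies $\Sigma$ as part of this helicoidal surface.

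The main obstacle is this last step. While the reduction to ODEs is conceptually transparent, since the Bochner relation collapses to an algebraic identity once $K$ is constant, the explicit integration and subsequent identification with the complex-parameter description at the end of Example \ref{helicoid} demand careful bookkeeping of the conformal parameter and of the horizontal isometries of $\M^2_c\times\R$, so as to rule out spurious families of solutions and to unambiguously match the constructed pair $(\nu,h)$ with that of the helicoid.
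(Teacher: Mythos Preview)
Your overall architecture coincides with the paper's: dispose of constant $\nu$ via Corollary~\ref{corollary1}, use Lemma~\ref{Delta.log.beta} on $\{r\neq0\}$ to force $K=4H^2+c$, then exploit the collapsed Bochner identity to make $\nu$ isoparametric and integrate. Your direct deduction of $c<0$ from \eqref{M1} alone is a pleasant shortcut compared with the paper, which reaches the same conclusion only after first extracting \eqref{K=cte,gradnu} from the Bochner identity.

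There is one genuine gap. Your argument for $K<0$ by ``testing \eqref{M2} at extrema of $\nu$'' does not work as stated: $\Sigma$ is not assumed complete or compact, so $\nu$ need not attain an interior maximum or minimum, and the bound $\nu^2\le 1+3H^2/c$ by itself does not exclude $K\ge 0$. The paper fixes this by first using Proposition~\ref{Prop.Bochner} (with $K$ constant) to obtain
\[
c\langle\nabla\nu,\nabla h\rangle=2H(K-c\nu^2),\qquad \|\nabla\nu\|^2=-\tfrac{1}{c}(K-c\nu^2)^2,
\]
and then applying Cauchy--Schwarz with $\|\nabla h\|^2=1-\nu^2$; this yields $4H^2\le -c(1-\nu^2)$, i.e.\ $K-c\nu^2\le 0$, hence $K<c\nu^2<0$ at any point where $\nu\neq 0$. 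Since you invoke the collapsed Bochner relation anyway in the next paragraph, you can simply insert this step there.

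Regarding the final integration, the paper does exactly what you outline and it is less delicate than you suggest. One takes coordinates $(x_1,x_2)$ with $\nu=x_1$, reads off $F(x_1)=\|\nabla\nu\|$ and $G(x_1)$ from \eqref{K=cte,gradnu}--\eqref{K=cte,Deltanu}, and then the system \eqref{M3}, \eqref{gradnu.gradh} determines $\partial h/\partial x_1$ and $\partial h/\partial x_2$ explicitly; the only freedom is a sign $\varepsilon=\pm1$ in $\partial h/\partial x_2$, which is absorbed by the symmetries of Remark~\ref{rmksign}. A single change of variables $(x_1,x_2)\mapsto(\sigma,\tau)$ then matches the metric, $\nu$ and $h$ with those of Example~\ref{helicoid}, and Theorem~\ref{theo1} gives the identification. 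No spurious families appear.
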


\begin{proof}
Let $\nu:\Sigma \to [-1,1]$ be the angle function of $\Sigma$. If $r=0$ then $\nu$ is a constant function on $\Sigma$ and so $\Sigma$ is characterized by Corollary \ref{corollary1}. On an open set where $r\neq 0$, suppose that $K\neq4H^2+c$. Then by Lemma \ref{Delta.log.beta} there is an even polynomial $g$ such that $g\circ\nu=0$; then $\nu$ is a constant function on $\Sigma$ and again $\Sigma$ is characterized by Corollary \ref{corollary1}.

Suppose that $K=4H^2+c$. If $\nu$ is constant on $\Sigma$ then the result follows from Corollary \ref{corollary1} and $\Sigma$ is part of an ARL-surface. If $\nu$ is not a constant function on $\Sigma$, Proposition \ref{Prop.Bochner} implies that
\begin{equation}\label{gradnu.gradh}
c\langle \nabla\nu,\nabla h\rangle=2H(4H^2+c-c\nu^2)
\end{equation}
and by Theorem \ref{theo1} we obtain the following system
\begin{align}
\|\nabla \nu\|^2 &= -\frac{1}{c}(4H^2+c-c\nu^2)^2, \label{K=cte,gradnu}\\
  \Delta\nu &= (4H^2+c-c\nu^2)\nu.\label{K=cte,Deltanu}
\end{align}

The equation \eqref{K=cte,gradnu} implies that $c<0$. Using the Cauchy-Schwarz inequality for $\langle\nabla\nu,\nabla h\rangle$ in \eqref{gradnu.gradh}, since $r\neq 0$, by equations \eqref{M1} and \eqref{M3} we have that $4H^2+c-c\nu^2<0$. Consequently, $K=4H^2+c<0$ and $|\nu|<\sqrt{\frac{K}{c}}$.\\

The system \eqref{K=cte,Deltanu} and \eqref{K=cte,gradnu} implies that the function $\nu$ is isoparametric, that is, $\|\nabla\nu\|^2$ and $\Delta \nu$ are  functions of $\nu$. Then there is a local parametrization of $\Sigma$ such that $\nu$ is one of coordinates (see, e.g., \cite{kenmotsu00} and \cite[page 163]{eisenhart40}), i.e., there is local coordinates $(x_1,x_2)$  on $\Sigma$ such that $\nu(x_1,x_2)=x_1$, for $x_1\in I$, where $I\subseteq\Big(-\sqrt{\frac{K}{c}},\sqrt{\frac{K}{c}}\Big)$ is an open interval, and
\[\dif s^2=\frac{1}{F(x_1)^2}\dif x_1^2+G(x_1)^2\dif x^2_2,\]
for $F(x_1)=\|\nabla \nu\|$ and $G:I\to\R$ defined by
\[F(x_1)G(x_1)=\exp\Big(\int\frac{\Delta \nu}{\|\nabla \nu\|^2}\dif x_1\Big).\]

By \eqref{gradnu.gradh} we have
\[F(x_1)=\frac{1}{\sqrt{-c}}(cx_1^2-c-4H^2)\] and we compute that, up to multiplication by a positive constant, \[G(x_1)=\frac{\sqrt{-c}}{(cx_1^2-c-4H^2)^{1/2}}.\]

Let $\{\partial_{x_1},\partial_{x_2}\}$ be the coordinate fields of the local parametrization $(x_1,x_2)$ on $\Sigma$. In the basis $\{\partial_{x_1},\partial_{x_2}\}$ of $\mathrm{T}\Sigma$, the gradient of height function $h$ of $\Sigma$ is written as
\[\nabla h = \frac{\partial h}{\partial x_1}F^2\partial_{x_1}+\frac{\partial h}{\partial x_2}\frac{1}{G^2}\partial_{x_2},\]
and the following system holds:
\begin{align}
\|\nabla h\|^2 &= 1-\nu^2 \label{X-1},\\
%\Delta h &= 2Hx_1 \label{X-2},\\
c\langle\nabla \nu,\nabla h\rangle &= 2H(4H^2+c-cx_1^2).\label{X-3}
\end{align}

Since $\nabla \nu=F^2\partial_{x_1}$, equations \eqref{X-1} and \eqref{X-3} imply that
\[\frac{\partial h}{\partial x_1}=\frac{2H}{cx_1^2-c-4H^2} \ \text{ \ and \ } \ \frac{\partial h}{\partial x_2}=\varepsilon,\]
for $\varepsilon=\pm 1$ and then, up to the addition of a constant, we obtain
\[h(x_1,x_2)=\frac{2H}{\sqrt{cK}}\arctanh \Big(\sqrt{\frac{c}{K}}x_1\Big)+\varepsilon x_2.\]

Considering the following change of coordinates
\[(x_1,x_2)\mapsto (\sigma,\tau)=\Bigg(\frac{1}{\sqrt{cK}}\arctanh\Big(\sqrt{\frac{c}{K}}x_1\Big),\frac{\varepsilon x_2}{\sqrt{-K}}\Bigg),\]
we can see that the metric $\dif s^2$, the angle and height functions of $\Sigma$ are also given, respectively, by
\begin{align*}
\dif s^2&=-c\dif\sigma^2-c\cosh^2(\sqrt{cK}\sigma)\dif\tau^2,\\
\nu(\sigma)&= \sqrt{\frac{K}{c}}\tanh(\sqrt{cK}\sigma),\\
h(\sigma,\tau)&= 2H\sigma+\sqrt{-K}\tau,
\end{align*}
that is, up to scaling, $\Sigma$ is the helicoidal surface of Example \ref{helicoid}.
\end{proof}

\begin{remark}
Note that Theorem \ref{theo-Kconstante} together with \cite[Theorem 4.2]{danielimj15} give a complete classification of constant mean curvature surfaces in $\M^2_c\times\R$ with constant intrinsic curvature.
\end{remark}

\begin{remark}
 For a given $H\neq0$, the ARL-surface and the helicoidal surface of Example \ref{helicoid} are complete surfaces in $\h^2\times\R$ with the same non-zero constant mean curvature, which are intrinsically isometric but not congruent; up to our knowledge this is the first example of such a pair (see \cite[Remark 1]{torralbotams12}).
\end{remark}

\section{CMC surfaces with constant intrinsic curvature in $\mathbb{E}(\kappa,\tau)$}\label{section4}

\subsection{Preliminaries and first examples}

In this section, as an application of Theorem \ref{theo-Kconstante}, we classify constant mean curvature surfaces in $\mathbb{E}(\kappa,\tau)$, for $\kappa-4\tau^2\neq 0$, with constant intrinsic curvature. The manifold $\mathbb{E}(\kappa,\tau)$ is a 3-homogeneous space with
a 4-dimensional isometry group; it is a Riemannian fibration of bundle curvature $\tau$ over $\M^2_\kappa$. A  comprehensive literature about constant mean curvature surfaces in these manifolds has been developed in the past few decades. For more details, we refer to \cite{abresch05,danielcmh07,fernandezmira10,scott83}. These spaces are classified as follows:
\begin{itemize}
  \item When $\tau=0$, $\mathbb{E}(\kappa,0)$ is the product space $\M^2_\kappa\times \R$,
  \item When $\tau\neq0$ and $\kappa>0$, $\mathbb{E}(\kappa,\tau)$ is a Berger sphere,
  \item When $\tau\neq0$ and $\kappa=0$, $\mathbb{E}(0,\tau)$ is the Heisenberg group with a left invariant metric.
  \item When $\tau\neq0$ and $\kappa<0$, $\mathbb{E}(\kappa,\tau)$ is the universal cover of $\textrm{PSL}_{2}(\R)$ with a left invariant metric, and we denote by $\psl2$.
\end{itemize}

From now on, we will assume that $\tau\neq 0$. As already mentioned in the product case, some examples of $H$-constant mean curvature surfaces in $\mathbb{E}(\kappa,\tau)$
with constant intrinsic curvature $K$ appears when angle function
is constant \cite{RosenbergCMH11}. Note that, by the recent classification by Dom\'{\i}nguez-V\'azquez and Manzano in \cite{vazquezmanzano18}, these constant angle CMC surfaces turn out to be the only isoparametric surfaces in $\mathbb{E}(\kappa,\tau)$.

\begin{example}[Vertical cylinders in $\mathbb{E}(\kappa,\tau)$]
Let $\gamma\subset\M^2_\kappa$ be a curve with constant geodesic curvature $k$. If $\pi:\mathbb{E}(\kappa,\tau)\to\M^2_\kappa$ is the Riemannian fibration then $\pi^{-1}(\gamma)$ is a $(k/2)$-CMC surface in $\mathbb{E}(\kappa,\tau)$ with intrinsic curvature $K=0$. Since the normal vector $N$ of $\pi^{-1}(\gamma)$ is orthogonal to the unit Killing vector field $\xi$, the angle function vanishes.
\end{example}

\begin{example}[Generalized ARL-surfaces in $\mathbb{E}(\kappa,\tau)$ with $\kappa<0$]
In this work, we call generalized ARL-surface the $H$-constant mean curvature surfaces $P_{H,\kappa,\tau}$ in $\mathbb{E}(\kappa,\tau)$ such that:
\begin{itemize}
  \item $P_{H,\kappa,\tau}$ has constant intrinsic curvature $K=4H^2+\kappa<0$.
  \item The Abresch-Rosenberg differential vanishes identically on $P_{H,\kappa,\tau}$.
  \item The function $\nu$ is constant on $P_{H,\kappa,\tau}$ satisfying  $\nu^2=\frac{4H^2+\kappa}{\kappa-4\tau^2}\in(0,1)$.
\end{itemize}

These surfaces generalizes the ARL-surfaces and have also been studied by Verpoort \cite{Verpoort2014}. Also in \cite{vazquezmanzano18}, Dom\'{\i}nguez-V\'azquez and Manzano gave an explicit parametrization of $P_{H,\kappa,\tau}$ has an entire graph.

Moreover, in \cite{RosenbergCMH11} it is shown that generalized ARL-surfaces
are the only $H$-constant mean curvature surfaces in $\mathbb{E}(\kappa,\tau)$ such that the angle function $\nu$ is constant and satisfies $0<\nu^2<1$.
\end{example}

\subsection{New examples in $\psl2$}
\begin{example}[Minimal surfaces in $\psl2$ with $K<0$ satisfying $K=\kappa$ invariant by parabolic isometries]\label{parabolicsurface-psl}
Up to scaling, we suppose that $\kappa=-1$. Consider $\psl2=\{(x,y,t)\in\R^3 : y>0\}$ endowed with the metric
\[\frac{\dif x^2+\dif y^2}{y^2}+\Big(-\frac{2\tau}{y}\dif x+\dif t\Big)^2, \text{\ with \ } \tau\neq 0.\]

Let $\Omega\subset\R^2$ be the open set given by $\Omega=\R\times(0,1)$. Consider the immersion $X:\Omega \to \psl2$ given by
\begin{equation}\label{parabolic-X}
X(x,y)=\big(x,y,\sqrt{4\tau^2+1}\arcsin{y}\big).
\end{equation}

This surface is the one studied by Pe\~nafiel in \cite[Lemma 4.2]{Penafiel2012} with $d=1$ (the other examples, with $d\in\R^*$, in that lemma can be reduced to this one if we consider the isometries of $\psl2$ given by $F_1(x,y,t)=(x/d,y/d,t)$, for $d>0$, and  $F_2(x,y,t)=(x/d,-y/d,-t)$, for $d<0$, with a change of coordinates; for $d=0$, the surface is a generalized ARL-surface). Moreover, this surface is invariant by 1-parameter group of parabolic isometries of $\psl2$ and by Pe\~nafiel's results, $X(\Omega)$ is a minimal surface.

The induced metric on $X(\Omega)$ is
\begin{equation}\label{metric-parabolic-psl}
\dif s^2=\frac{4\tau^2+1}{y^2}\dif x^2-\frac{4\tau \sqrt{4\tau^2+1}}{y\sqrt{1-y^2}}\dif x \dif y+\frac{4\tau^2y^2+1}{y^2(1-y^2)}\dif y^2.
\end{equation}

Choosing the appropiate orientation, the angle function $\nu$ is
\begin{equation}\label{nu-parabolic-psl}
\nu(y)=\frac{\sqrt{1-y^2}}{\sqrt{4\tau^2+1}}.
\end{equation}

Since the coefficients of the first fundamental form of $X(\Omega)$ depend only of $y$, standard computations using the Christoffel symbols show that the intrinsic curvature $K$ of $\dif s^2$ is given by
\begin{equation}\label{curvature-parabolic-psl}
K(y) = \frac{1}{4(EG-F^2)^2}\Big\{E_y(EG)_y-2(EG-F^2)E_{yy}-2FE_yF_y\Big\}.
\end{equation}

We compute all terms involved in \eqref{curvature-parabolic-psl}:
\[EG-F^2 = \frac{4\tau^2+1}{y^4(1-y^2)}, \ E_y = -\frac{2(4\tau^2+1)}{y^3}, \ E_{yy}=\frac{6(4\tau^2+1)}{y^4},\]
\[F_y = -\frac{2\tau \sqrt{4\tau^2+1}(2y^2-1)\sqrt{1-y^2}}{y^2(1-y^2)^2}\]
and
\[(EG)_y = \frac{16\tau^2y^4(4\tau^2+1) -2y^2(16\tau^4-8\tau^2-3) -4(4\tau^2+1)}{y^5(1-y^2)^2}.\]

A straightforward computation using these expressions and \eqref{curvature-parabolic-psl} shows that $X(\Omega)$  has constant intrinsic curvature $K=-1$.

Therefore, $X$ is a minimal isometric immersion of $\Omega$ endowed with the metric \eqref{metric-parabolic-psl} into $\psl2$, with constant intrinsic curvature $K=-1$, such that the angle function is given by \eqref{nu-parabolic-psl}.
Considering the $\pi$-rotation around the geodesic $\{x=0, t=\frac{\pi}{2}\sqrt{4\tau^2+1}\}$, we get a complete embedded minimal surface invariant by parabolic isometries of $\psl2$ (see Figures 10 and 11 in \cite[Example 4.3]{Penafiel2012} for pictures of this surface).
\end{example}

\begin{example}[Helicoidal surfaces in $\psl2$ satisfying $K=4H^2+\kappa<0$]\label{helicoid-psl}
Since the relation $K=4H^2+\kappa<0$ is invariant by scaling the metric of $\psl2$, we may multiply this metric by $1/\sqrt{-\kappa}$ and so assume that $\kappa=-1$. Now we consider $\psl2=\{(x,y,t)\in\R^3 : x^2+y^2<1\}$ endowed with the metric
\[\lambda^2(\dif x^2+\dif y^2)+\Big(2\tau\frac{\lambda_y}{\lambda}\dif x-2\tau\frac{\lambda_x}{\lambda}\dif y+\dif t\Big)^2,\]
with $\lambda=2/(1-(x^2+y^2))$.

Let $K$ and $H$ be real numbers, such that $H>0$ and $K=4H^2-1<0$. Let $\varepsilon=\pm 1$. We set
\begin{align*}
  A &= \frac{H\sqrt{1-4H^2}}{\sqrt{H^2+\tau^2}}\big(\sqrt{4\tau^2+1}-2\varepsilon\tau\big), \\
  B &= -\frac{H\sqrt{1-4H^2}}{\sqrt{H^2+\tau^2}}\big(\sqrt{4\tau^2+1}+2\varepsilon\tau\big),
  \\
  C &= -\frac{\varepsilon\tau\sqrt{1-4H^2}}{2H\sqrt{H^2+\tau^2}}.
\end{align*}

We note that \[1-A^2=\frac{\big(\varepsilon\tau(1-4H^2)+2H^2\sqrt{4\tau^2+1}\big)^2}{H^2+\tau^2};\] hence $0<A\leq 1$, and $A=1$ if and only if $\varepsilon\tau<0$ and $\tau^2(1-8H^2)=4H^4$. Similarly,
\[1-B^2=\frac{\big(\varepsilon\tau(1-4H^2)-2H^2\sqrt{4\tau^2+1}\big)^2}{H^2+\tau^2};\]
hence $-1\leq B<0$, and $B=-1$ if and only if $\varepsilon\tau>0$ and $\tau^2(1-8H^2)=4H^4$.

Consider the screw motion invariant immersion $X_{\varepsilon}:\R^2\to \psl2$ given by
\begin{equation*}
X_\varepsilon(\sigma,\theta)=\Big(f(\sigma)\cos(\theta),f(\sigma)\sin(\theta),u_{\varepsilon}(\sigma)+\big(-2\tau+\varepsilon\sqrt{4\tau^2+1}\big)\theta\Big),
\end{equation*}
where $f:\R\to(-1,1)$ and $u_\varepsilon:\R\to\R$ are defined as follows.

If $\varepsilon\tau>0$ or if $\varepsilon\tau<0$ and $\tau^2(1-8H^2)\neq4H^4$, the function $f$ is defined as
\[f(\sigma)=\frac{\sqrt{\cosh(\sigma)-A}}{\sqrt{\cosh(\sigma)-B}}.\]

If $\varepsilon\tau<0$ and $\tau^2(1-8H^2)=4H^4$, the function $f$ is defined as
\[f(\sigma) =\frac{\tanh(\sigma/2)}{\sqrt{4H^2\tanh^2(\sigma/2)+1-4H^2}}.\]

If $\tau^2(1-8H^2)\neq 4H^4$, the function $u_\varepsilon$ is defined as
\begin{multline*}
u_\varepsilon(\sigma) = \frac{2H\sqrt{4\tau^2+1}}{\sqrt{1-4H^2}}\sigma\\
+\frac{2\sqrt{H^2+\tau^2}}{1-4H^2}\Bigg\{\frac{A(A-C)}{\sqrt{1-A^2}}\arctan\Bigg(\frac{\sqrt{1+A}}{\sqrt{1-A}}\tanh(\sigma/2)\Bigg)\\
-\frac{B(B-C)}{\sqrt{1-B^2}}\arctan\Bigg(\frac{\sqrt{1+B}}{\sqrt{1-B}}\tanh(\sigma/2)\Bigg)\Bigg\}.
\end{multline*}

If $\varepsilon\tau>0$ and $\tau^2(1-8H^2)=4H^4$, the function $u_\varepsilon$ is defined as
\[u_\varepsilon(\sigma) = \frac{2H\sqrt{1-4H^2}}{\sqrt{1-8H^2}}\sigma+\sqrt{1-8H^2}\arctan\Bigg(\frac{\sqrt{1-4H^2}}{2H}\tanh(\sigma/2)\Bigg).\]

If $\varepsilon\tau<0$ and $\tau^2(1-8H^2)=4H^4$, the function $u_\varepsilon$ is defined as
\begin{align*}
u_\varepsilon(\sigma) &= \frac{2H\sqrt{1-4H^2}}{\sqrt{1-8H^2}}\sigma-\sqrt{1-8H^2}\arctan\Bigg(\frac{2H}{\sqrt{1-4H^2}}\tanh(\sigma/2)\Bigg).
\end{align*}

We note that $f$ and $u_\varepsilon$ remain unchanged if we multiply both $\varepsilon$ and $\tau$ by $-1$. In all cases, $X_\varepsilon$ is analytic and
\[u_\varepsilon'(\sigma)=\frac{2H\sqrt{4\tau^2+1}}{\sqrt{1-4H^2}}\frac{\big(\cosh(\sigma)-C\big)\cosh(\sigma)}
{\big(\cosh(\sigma)-A\big)\big(\cosh(\sigma)-B\big)}.\]

This surface is the analytic continuation of a surface that belongs to the screw motion invariant family in $\psl2$ studied by Pe\~nafiel in \cite{penafiel15}. In fact, if we consider the change of coordinates given by
\[\sigma(\rho) = \arccosh\Bigg(\frac{H\sqrt{1-4H^2}}{\sqrt{H^2+\tau^2}}\big(\sqrt{4\tau^2+1}\cosh(\rho)-2\varepsilon\tau\big)\Bigg)\]
for $\rho>\arccosh\frac{2\tau\varepsilon H\sqrt{1-4H^2}+\sqrt{H^2+\tau^2}}{H\sqrt{1-4H^2}\sqrt{4\tau^2+1}}$, we obtain the screw motion invariant surface of \cite[Section 3.3]{penafiel15} with $$l=-2\tau+\varepsilon\sqrt{4\tau^2+1}\quad\textrm{and}\quad d=\frac{\varepsilon\tau(1-4H^2)}{H\sqrt{4\tau^2+1}}$$ ($l$ is the pitch if the screw motion). Indeed, we have
\[(u_\varepsilon\circ\sigma)'(\rho)=\frac{\big(2H^2\sqrt{4\tau^{2} + 1}\cosh(\rho) +\varepsilon(1-4H^{2}) \tau\big) \big(\sqrt{4\tau^{2} + 1} \cosh(\rho) - 2\tau\varepsilon\big)}{\sinh(\rho)\sqrt{(1-4H^{2})H^2 \big(\sqrt{4\tau^{2} + 1} \cosh(\rho) - 2\tau\varepsilon\big)^{2} - H^{2} - \tau^{2}}}.\]
and $f(\sigma(\rho))=\tanh(\rho/2)$. By Pe\~nafiel's results, $X_\varepsilon(\R^2)$ has constant mean curvature $H$.

The induced metric on $X_\varepsilon(\R^2)$ is
\[\dif s^2=E\dif\sigma^2+2F\dif\sigma \dif\theta+G\dif\theta^2\]
where the terms $E$, $F$ and $G$ are given by
\begin{align*}
E(\sigma) &= \rho'(\sigma)^2+u_\varepsilon'(\sigma)^2,\\
F(\sigma) &= \frac{u_\varepsilon'(\sigma)}{H\sqrt{1-4H^2}\sqrt{4\tau^2+1}}\big(\varepsilon H\sqrt{1-4H^2}-2\tau\sqrt{H^2+\tau^2}\cosh(\sigma)\big), \\
G(\sigma) &=\frac{H^2+\tau^2}{H^2(1-4H^2)}\cosh^2(\sigma),
\end{align*}
where $\rho(\sigma)=2\arctanh(f(\sigma))$.

Since the coefficients of the first fundamental form of $X_\varepsilon(\R^2)$ depend only of $\sigma$, standard computations using the Christoffel symbols show that the intrinsic curvature $K$ of $\dif s^2$ is given by
\begin{equation}\label{curvature-screw-motion-psl}
K(\sigma) = \frac{1}{2(EG-F^2)^2}\Bigg\{\frac{1}{2}(EG-F^2)_{\sigma}G_{\sigma}-(EG-F^2)G_{\sigma\sigma}\Bigg\}.
\end{equation}

We compute all terms involved in \eqref{curvature-screw-motion-psl}:
\begin{align*}
G_\sigma &= \frac{2(H^2+\tau^2)}{H^2(1-4H^2)}\cosh(\sigma)\sinh(\sigma),\\
G_{\sigma\sigma}  &=\frac{2(H^2+\tau^2)}{H^2(1-4H^2)}\big(2\cosh^2(\sigma)-1\big),\\
EG-F^2&= \frac{H^2+\tau^2}{H^2(1-4H^2)^2}\cosh^2(\sigma),\\
(EG-F^2)_\sigma &= \frac{2(H^2+\tau^2)}{H^2(1-4H^2)^2}\cosh(\sigma)\sinh(\sigma).
\end{align*}

A straightforward computation using these expressions and \eqref{curvature-screw-motion-psl} shows that $X_\varepsilon(\R^2)$  has constant intrinsic curvature $K=4H^2-1<0$.

We now study the behaviour of the generating curve $\Gamma_\varepsilon=\{X_\varepsilon(\sigma,0) : \sigma\in\R\}$. First, note that $u_\varepsilon(\sigma)\to +\infty$ when $\sigma\to+\infty$ and $u_\varepsilon$ is odd; hence $X_\varepsilon(\R^2)$ is a complete surface. We distinguish three cases in terms of $\varepsilon\tau$ and $H$ (see figures \ref{screwmotion-fig1}, \ref{screwmotion-fig2} and \ref{screwmotion-fig3}):

\begin{description}
  \item[Type I] If $\varepsilon\tau>0$ or if $\varepsilon\tau<0$ and $\tau^2(1-8H^2)<4H^4$, then $C<1$, and so $u_\varepsilon'>0$ and  $u_\varepsilon$ is strictly increasing on $(0,+\infty)$. Moreover, $f$ is even and $u_\varepsilon$ is odd, so $\Gamma_\varepsilon$ is invariant by the $\pi$-rotation around the geodesic $\{y=0,t=0\}$.
  \item[Type II] If $\varepsilon\tau<0$ and $\tau^2(1-8H^2)>4H^4$, then $C>1$, and so we can consider $\sigma_1=\arccosh(C)$. We have $u_\varepsilon'<0$ on $(0,\sigma_1)$ and $u_\varepsilon'>0$ on $(\sigma_1,+\infty)$, so $u_\varepsilon$ is strictly decreasing on $(0,\sigma_1)$ and $u_\varepsilon'$ is strictly increasing on $(\sigma_1,+\infty)$. Moreover, $f$ is even and $u_\varepsilon$ is odd, so $\Gamma_\varepsilon$ is invariant by the $\pi$-rotation around the geodesic $\{y=0,t=0\}$. Since $u_\varepsilon$ changes of sign, $\Gamma_\varepsilon$ is not an embedded curve.
  \item[Type III] If $\varepsilon\tau<0$ and $\tau^2(1-8H^2)=4H^4$, then $u_\varepsilon'>0$ and so $u_\varepsilon$ is strictly increasing on $(0,+\infty)$. Moreover, $f$ and $u_\varepsilon$ are odd, so $\Gamma_\varepsilon$ is invariant by the $\pi$-rotation around the geodesic $\{x=0,t=0\}$.
\end{description}

Therefore, given $H>0$ satisfying $0<4H^2<1$, there are two complete $H$-constant mean curvature isometric immersions into $\psl2$, with constant intrinsic curvature $K=4H^2-1<0$: $X_1(\R^2)$ and $X_{-1}(\R^2)$ (these two surfaces are not congruent since they are invariant by screw motions with different pitches $l$).

\begin{figure}[h!]
\centering
\captionsetup{justification=centering}
\caption{\bf Generating curve $\Gamma_\varepsilon$ with $\tau=\frac{1}{2}$ and $\varepsilon = 1$:}\label{screwmotion-fig1}
\subfigure[$H=\frac{1}{4}$ (Type I).]{\includegraphics[width=40mm]{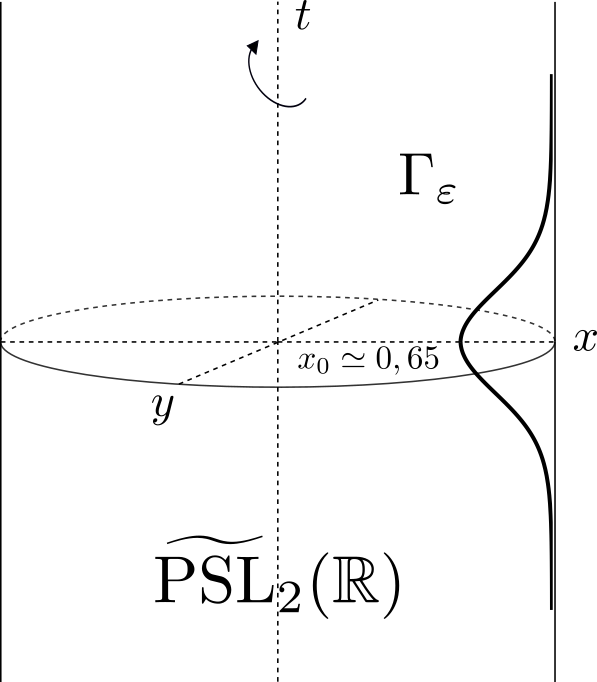}}\hfill
\subfigure[$H=\frac{\sqrt{\sqrt{2}-1}}{2}$ (Type I).]{\includegraphics[width=40mm]{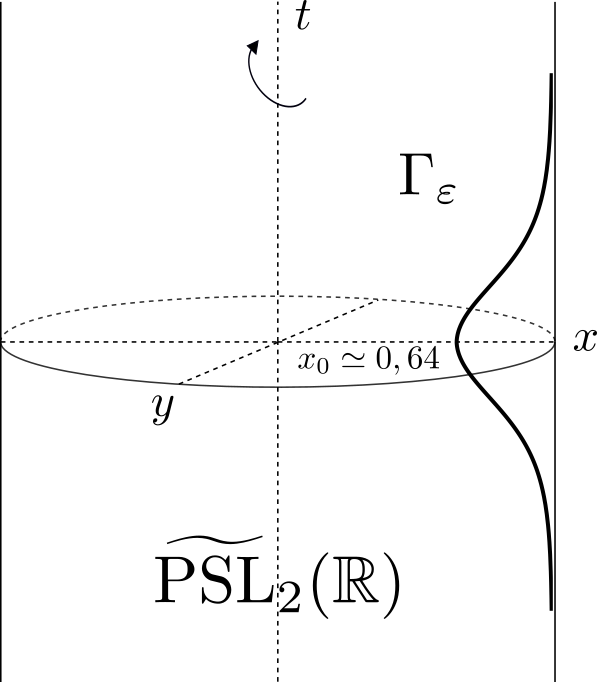}}\hfill
\subfigure[$H=\frac{\sqrt{2}}{3}$ (Type I).]{\includegraphics[width=40mm]{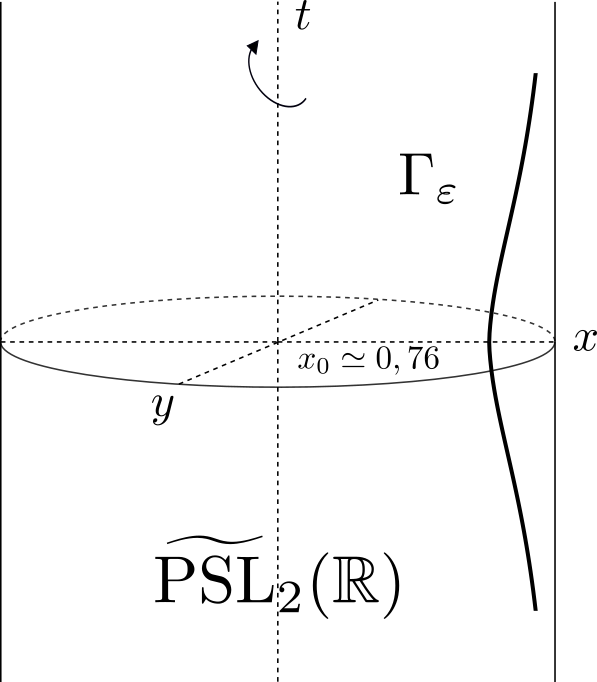}}
\end{figure}

\begin{figure}[h!]
\centering
\captionsetup{justification=centering}
\caption{\bf Generating curve $\Gamma_\varepsilon$ with $\tau=\frac{1}{2}$ and $\varepsilon = -1$:}\label{screwmotion-fig2}
\subfigure[$H=\frac{1}{4}$ (Type II).]{\includegraphics[width=40mm]{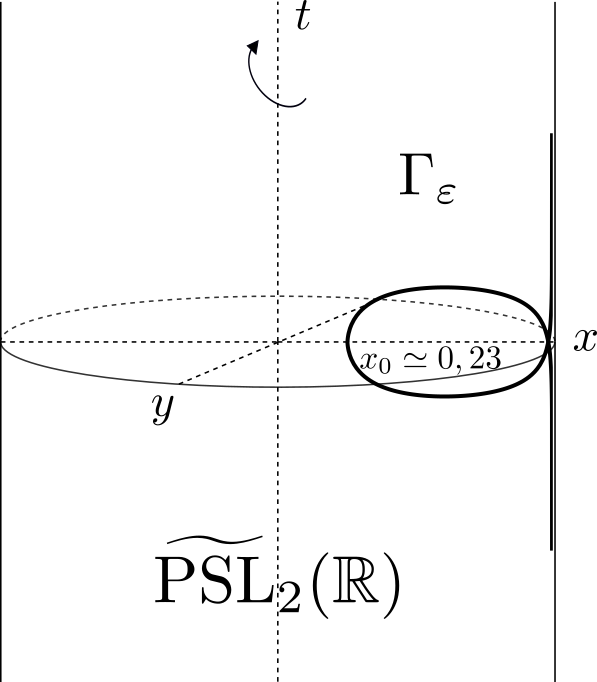}}\hfill
\subfigure[$H=\frac{\sqrt{\sqrt{2}-1}}{2}$ (Type III).]{\includegraphics[width=40mm]{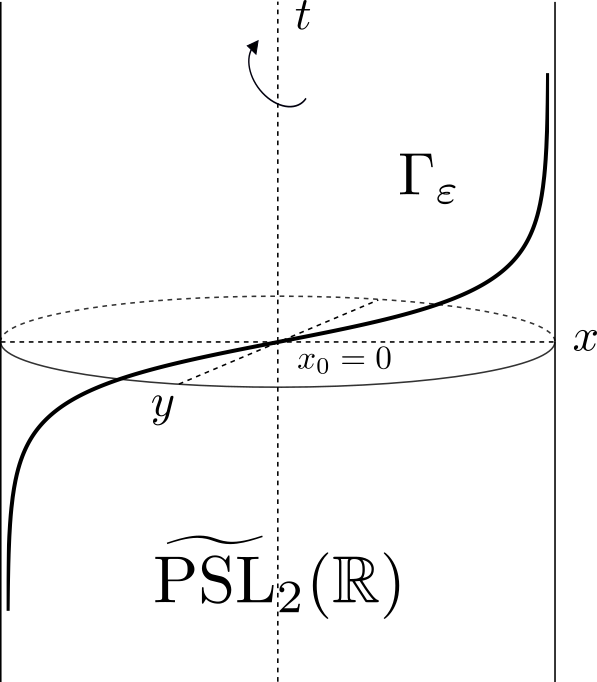}}\hfill
\subfigure[$H=\frac{\sqrt{2}}{3}$ (Type I).]{\includegraphics[width=40mm]{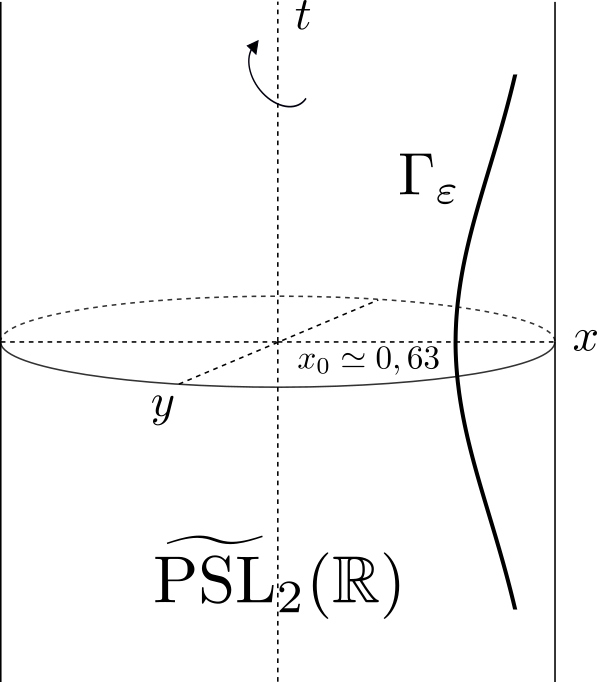}}
\end{figure}

\begin{figure}[h!]
\centering
\captionsetup{justification=centering}
\caption{\bf Complete screw motion surfaces in $\psl2$, with $\tau =\frac{1}{2}$, satisfying $K=4H^2-1<0$:}\label{screwmotion-fig3}
\subfigure[$\varepsilon = 1$ and $H=\frac{1}{4}$ (Type I).]{\includegraphics[width=50mm]{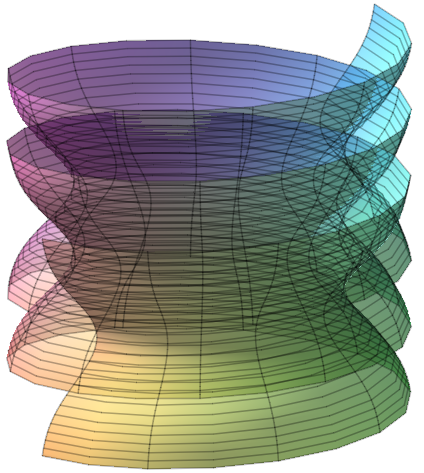}}\hfill
\subfigure[$\varepsilon =-1$ and $H=\frac{1}{4}$ (Type II).]{\includegraphics[width=50mm]{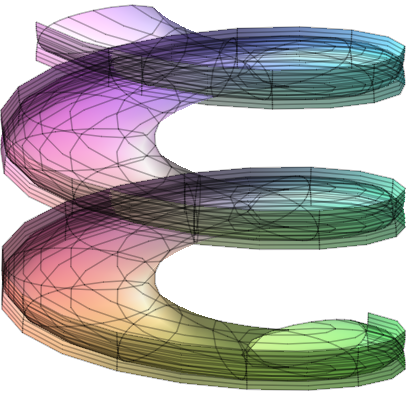}}\\
\subfigure[$\varepsilon =-1$ and $H=\frac{\sqrt{\sqrt{2}-1}}{2}$ (Type III).]{\includegraphics[width=50mm]{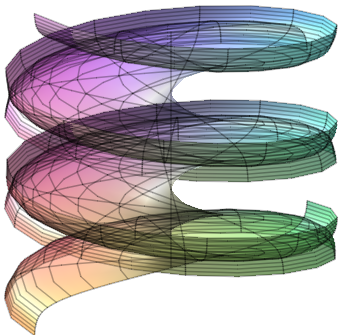}}
\end{figure}
\end{example}

\newpage

\begin{remark}
We explain how type I and II surfaces converge to a type III surface. We fix $\tau$ and $\varepsilon$ such that $\varepsilon\tau<0$ and we now index by $H$ the previous quantities and functions. Note that the $u_{\varepsilon,H}$ functions have been chosen so that $u_{\varepsilon,H}(0)=0$. Let $H_*$ be the mean curvature of the type III surface; it satisfies $\tau^2(1-8H_*^2)=4H_*^4$, hence $$H_*^2=-\frac\tau{2l}.$$

We note that $$A_H-C_H=\frac{\sqrt{1-4H^2}}{2H\sqrt{H^2+\tau^2}}\big(\varepsilon\tau(1-4H^2)+2H^2\sqrt{4\tau^2+1}\big),$$ hence $$1-A_H^2=\frac{4H^2}{1-4H^2}(A_H-C_H)^2,$$ $A_H-C_H>0$ for $H>H_*$ (type I surfaces) and $A_H-C_H<0$ for $H<H_*$ (type II surfaces).

Consequently, using the fact that $l(l+4\tau)=1$ and $1-8H_*^2=l^2$, when $H\to H_*$ we have $A_H\to1$, $B_H\to8H_*^2-1$, $C_H\to1$ and
$$\frac{2\sqrt{H^2+\tau^2}}{1-4H^2}\frac{A_H(A_H-C_H)}{\sqrt{1-A_H^2}}
\to\pm|l|.$$ So, since $\sqrt{\frac{1+A_H}{1-A_H}}\to+\infty$, we have $$u_{\varepsilon,H}\to u_{\varepsilon,H_*}\pm|l|\frac\pi2$$ uniformly on compact sets of $(0,+\infty)$. Here, the $+$ signs occur when the limit is for $H>H_*$ (type I surfaces) and the $-$ signs for $H<H_*$ (type II surfaces). On the other hand, $$f_H\to f_{H_*}$$ uniformly on compact sets of $(0,+\infty)$.

Considering the symmetries, we conclude that, when $H>H_*$, the curve $\Gamma_{\varepsilon,H}$ converges to $$T(\Gamma_{\varepsilon,H_*}^+)\cup S\cup (R\circ T)(\Gamma_{\varepsilon,H_*}^+)$$ where $\Gamma_{\varepsilon,H_*}^+=\{X_{\varepsilon,H_*}(\sigma,0):\sigma\in(0,+\infty)\}$, $S$ is the vertical segment $\{(0,0)\}\times[-|l|\pi/2,|l|\pi/2]$,
$T$ is the vertical translation of length $|l|\pi/2$ upward and $R$ the $\pi$-rotation around the geodesic $\{y=0,t=0\}$. And when $H<H_*$, the curve $\Gamma_{\varepsilon,H}$ converges to $$T^{-1}(\Gamma_{\varepsilon,H_*}^+)\cup S\cup (R\circ T^{-1})(\Gamma_{\varepsilon,H_*}^+).$$ In both cases, this limit set generates by screw motion of pitch $l$ a type III surface translated by $\pi|l|/2$ or $-\pi|l|/2$ in the vertical direction, i.e., a type III surface rotated by angle $\pi/2$ or $-\pi/2$, depending also on the sign of $l$.
\end{remark}

\subsection{Classification}
\begin{theorem}\label{theo-Ekappatau}
Let $\kappa$ and $\tau$ be real numbers such that $\tau\neq0$ and $\kappa-4\tau^2\neq 0$, and $\Sigma$ be an $H$-constant mean curvature surface in $\mathbb{E}(\kappa,\tau)$ with constant intrinsic curvature $K$. Then one of the following holds:
\begin{itemize}
  \item either $K=0$ and $\Sigma$ is part of a vertical cylinder over a curve $\gamma\subset \M_\kappa^2$ with geodesic curvature $2H$,
  \item or $\kappa<0$, $K=4H^2+\kappa<0$ and $\Sigma$ is part of a generalized ARL-surface,
  \item or $\kappa<0$, $H=0$, $K=\kappa$ and $\Sigma$ is part of a minimal surface of Example \ref{parabolicsurface-psl},
  \item or $\kappa<0$, $H\neq 0$, $K=4H^2+\kappa<0$ and $\Sigma$ is part of one of twin helicoidal surfaces of Example \ref{helicoid-psl}.
\end{itemize}
\end{theorem}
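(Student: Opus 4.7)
The plan is to reduce to Theorem \ref{theo-Kconstante} via the Daniel sister correspondence \cite[Theorem 5.2]{danielcmh07}. Passing to the universal cover so that $\Sigma$ is simply connected, I would form its sister surface $\tilde\Sigma \subset \M^2_c\times\R$, choosing the correspondence parameter so that the target bundle curvature vanishes. Under this choice, $c=\kappa-4\tau^2\neq 0$, the surface $\tilde\Sigma$ is $\tilde H$-CMC with $\tilde H=\sqrt{H^2+\tau^2}>0$ (recall $\tau\neq 0$), and the correspondence preserves the induced metric and the angle function; in particular, $\tilde\Sigma$ has the same constant intrinsic curvature $K$.

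Applying Theorem \ref{theo-Kconstante} to $\tilde\Sigma$ then gives three mutually exclusive cases: (i) $K=0$, with $\tilde\Sigma$ a vertical cylinder; (ii) $c<0$, $K=4\tilde H^2+c=4H^2+\kappa<0$, with $\tilde\Sigma$ an ARL-surface; or (iii) $c<0$, $K=4H^2+\kappa<0$, with $\tilde\Sigma$ the helicoidal surface of Example \ref{helicoid}. Cases (ii)--(iii) force $\kappa<4\tau^2$ together with $4H^2<-\kappa$, hence $\kappa<0$. Because the angle function is preserved by the correspondence, case (i) ($\nu\equiv 0$) means the unit Killing field of the fibration is tangent to $\Sigma$, so $\Sigma$ is a vertical cylinder, and matching the mean curvature forces its base curve to have geodesic curvature $2H$; case (ii), with $\nu$ constant satisfying $\nu^2=(4H^2+\kappa)/(\kappa-4\tau^2)\in(0,1)$, matches exactly the characterization of generalized ARL-surfaces in $\mathbb{E}(\kappa,\tau)$.

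The main obstacle lies in case (iii), where $\nu$ is non-constant and one must identify the sister of the helicoidal surface of Example \ref{helicoid} explicitly. Using the conformal parametrization given there and integrating the sister correspondence, one should recover Pe\~nafiel's parabolic-invariant minimal surface of Example \ref{parabolicsurface-psl} when $H=0$ (in which case the sister has $\tilde H=|\tau|$ and $K=\kappa$) and the screw-motion surfaces of Example \ref{helicoid-psl} when $H\neq 0$. The appearance of two twin non-congruent surfaces $X_1(\R^2)$ and $X_{-1}(\R^2)$ for a single $H\neq 0$ should be explained by the sign ambiguity of Remark \ref{rmksign}: the pair $(\nu,-\nabla h)$ gives a surface in $\M^2_c\times\R$ congruent to the original via the ambient isometry reversing the $\R$-factor, but this isometry does not lift to an isometry of $\mathbb{E}(\kappa,\tau)$ preserving the fiber orientation, so the two sign choices produce sisters with the two distinct screw-motion pitches $-2\tau\pm\sqrt{4\tau^2+1}$ that appear in Example \ref{helicoid-psl}, which completes the classification.
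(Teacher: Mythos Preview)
Your overall strategy---reduce to $\M^2_c\times\R$ via the sister correspondence and invoke Theorem \ref{theo-Kconstante}---is exactly what the paper does. The identification of $c=\kappa-4\tau^2$, $\tilde H^2=H^2+\tau^2$, the preservation of the metric and angle function, and the handling of cases (i) and (ii) via constant $\nu$ are all correct and match the paper (the paper actually separates out the constant-$\nu$ case first by quoting \cite[Theorem 2.2]{RosenbergCMH11}, but this is cosmetic).

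The substantive difference is in your case (iii). You propose to ``integrate the sister correspondence'' explicitly to recover the surfaces of Examples \ref{parabolicsurface-psl} and \ref{helicoid-psl}. The paper avoids this entirely by a counting argument: given the helicoidal surface $\overline\Sigma$ in $\M^2_{\kappa-4\tau^2}\times\R$, the relation $\overline H^2=H_k^2+\tau^2$ together with $\tau+iH_k=ie^{i\theta_k}\overline H$ has exactly two solutions $(H_k,\theta_k)$, $k=1,2$, giving at most two sister surfaces $\Sigma_1,\Sigma_2$ in $\mathbb{E}(\kappa,\tau)$ with $H_1=-H_2$. When $H\neq 0$ these are \emph{twins} in the sense of \cite[Theorem 5.14]{danielcmh07}; since Example \ref{helicoid-psl} already exhibits two non-congruent $H$-CMC surfaces with $K=4H^2+\kappa$ and non-constant $\nu$, they must coincide with $\Sigma_1,\Sigma_2$ (up to the orientation-reversing isometry that trades $H$ for $-H$). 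When $H=0$ the two solutions collapse ($\Sigma_1=\Sigma_2$), and Example \ref{parabolicsurface-psl} provides the unique candidate. This existence-plus-uniqueness argument bypasses any explicit computation of the sister.

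Your explanation of the twin ambiguity via Remark \ref{rmksign} (the sign of $\nabla h$) is not the mechanism at work. The two twins arise from the two choices of $\theta$ in the inverse sister correspondence, i.e., the two roots $\pm H$ of $\overline H^2=H^2+\tau^2$, not from the sign of the height gradient on the $\M^2_c\times\R$ side. The surfaces $(\nu,\nabla h)$ and $(\nu,-\nabla h)$ in $\M^2_c\times\R$ are genuinely congruent there; the non-congruence in $\mathbb{E}(\kappa,\tau)$ comes from the branching of the correspondence itself, not from a failure of an ambient isometry to lift.
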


\begin{proof}
Let $\nu:\Sigma\to[-1,1]$ be the angle function of $\Sigma$. If $\nu$ is a constant function, since $\tau\neq 0$ by \cite[Theorem 2.2]{RosenbergCMH11} we get that either $\nu=0$, $K=0$ and $\Sigma$ is part of a vertical cylinder over a complete curve $\gamma\in\M^2_\kappa$ with geodesic curvature $2H$, or $0<\nu^2<1$, $K=4H^2+\kappa<0$ and $\Sigma$ is part of a generalized ARL-surface.

From now on, suppose $\nu$ is not a constant function. Let $(\dif s^2, S, T, \nu)$ be the Gauss-Codazzi data on $\Sigma$ into $\mathbb{E}(\kappa,\tau)$. Consider $\overline{H},\overline{\kappa}\in\R$ such that $\overline{H}^2=H^2+\tau^2$,  $\overline{\kappa}=\kappa-4\tau^2$ and $\overline{\tau}=0$. By the sister surface correspondence \cite[Theorem 5.2]{danielcmh07} the 4-uple $(\dif s^2, \overline{S}, \overline{T}, \nu)$ is the Gauss-Codazzi data of surface $\overline{\Sigma}$ in $\mathbb{E}(\overline{\kappa},0)=\M_{\overline{\kappa}}^2\times\R$, where
\[\overline{S}=e^{\theta J}(S-HI)+\overline{H}I \ \text{ \ and \ } \ \overline{T}=e^{\theta J}T,\]
with $\theta\in\R$ given by $i\overline{H}=e^{i\theta}(\tau+iH)$ and therefore
$\overline{\Sigma}$ has constant mean curvature $\overline{H}$ and is isometric to $\Sigma$. Since $\dif s^2$ has constant intrinsic curvature and $\nu$ is not a constant function, by Theorem \ref{theo-Kconstante} we get that
$\overline{\kappa}<0$ and $K=4\overline{H}^2+\overline{\kappa}<0$, that is, $\kappa-4\tau^2<0$ and $K=4H^2+\kappa<0$, and $\overline{\Sigma}$ is part of the helicoidal surface of Example \ref{helicoid} in $\M^2_{\kappa-4\tau^2}\times\R$.

However, the helicoidal surface of Example \ref{helicoid-psl} in $\M^2_{\kappa-4\tau^2}\times\R$ of constant mean curvature $\overline{H}$ has at most two constant mean curvature sister surfaces $\Sigma_1$ and $\Sigma_2$ in $\ekappatau$: denote by $H_k$ and $(\dif s^2,S_k, T_k, \nu)$, with $k=1,2$, their respective mean curvature and Gauss-Codazzi data, then $\overline{H}^2=H_k^2+\tau^2$, i.e., $H_1=-H_2=\pm H$, and
\[S_k=e^{\theta_k J}(\overline{S}-\overline{H}\mathrm{I})+H_i\mathrm{I} \ \text{ \ and \ } \ T_k=e^{\theta_k J}\overline{T},\]
with $\theta_k\in\R$ given by $\tau+iH_k=ie^{i\theta_k}\overline{H}$. Consequently, $\Sigma$ is part of $\Sigma_1$ or of $\Sigma_2$.

Assume $H\neq 0$. Then $\Sigma_1$ and $\Sigma_2$ are twin surfaces \cite[Theorem 5.14]{danielcmh07}. On the other hand, up to an orientation reversing isometry, Example \ref{helicoid-psl} provided two non-congruent $H$-constant mean curvature surfaces in $\ekappatau$ with constant intrinsic curtavure $K=4H^2+\kappa<0$ and non-constant angle function. Then, these two surfaces coincide with $\Sigma_1$ and $\Sigma_2$, and we conclude that $\Sigma$ is part of one of the surfaces of Example \ref{helicoid-psl}.

Assume $H=0$. Then $\Sigma_1=\Sigma_2$ (since they have the same Gauss-Codazzi data). On the other hand, Example \ref{parabolicsurface-psl} provided a minimal surface in $\ekappatau$ with constant intrinsic curvature $K=\kappa<0$ and non-constant angle function. Then, this surface coincide with $\Sigma_1$, and we conclude that $\Sigma$ is part of one of the surfaces of Example \ref{parabolicsurface-psl}.

\end{proof}

\section{Parallel mean curvature surfaces in $\s^2\times\s^2$ and $\h^2\times\h^2$ with constant intrinsic curvature} \label{sectionpmc}

\subsection{Preliminaries and first examples} \label{prelim}

A parallel mean curvature surface in $\M^2_c\times\M^2_c$ is a surface whose mean curvature vector $\mathcal{H}$ satisfies $$\nabla^\perp\mathcal{H}=0,$$ where $\nabla^\perp$ is the normal connection. This implies in particular that the norm of $\mathcal{H}$ is constant on the surface. For $H>0$ we will call $H$-PMC surface a parallel mean curvature surface whose mean curvature vector has norm $H$. An $H$-PMC immersion is an immersion whose image is an $H$-PMC surface. We will not consider minimal surfaces in this section.

Torralbo and Urbano \cite{torralbotams12} studied PMC surfaces in $\M^2_c\times\M^2_c$. In particular (Theorem 1) they proved that there is a one-to-one correspondence between classes of congruence of $H$-PMC immersions into $\M^2_c\times\M^2_c$ and unordered pairs of classes of congruence of $H$-CMC immersions into $\M^2_c\times\R$ (we say that two immersions $\varphi_1$ and $\varphi_2$ into some Riemannian manifold $M$ are congruent if there exists an isometry $f$ of $M$ such that $\varphi_2=f\circ\varphi_1$). In particular, from \cite[Theorem 1]{torralbotams12} and our Theorem \ref{theo1} follows that a class of congruence of an $H$-PMC immersion of an oriented surface $\Sigma$ in $\M^2_c\times\M^2_c$ can be characterized by a metric $\dif s^2$ and an unordered pair $\{\pm(\nu_1,\dif h_1),\pm(\nu_2,\dif h_2)\}$ where $(\nu_1,h_1)$ and $(\nu_2,h_2)$ are solutions to \eqref{M1}, \eqref{M2}, \eqref{M3} and \eqref{M4}. We refer to Section \ref{rktu} for details.

Up to scaling, we may assume that $c=\pm1$.

 The product manifold $\s^2\times\R$ is isometrically immersed into $\s^2\times\s^2$ as a totally geodesic $\s^2\times\s^1$, $\s^1$ being a geodesic of $\s^2$; similarly, $\h^2\times\R$ is isometrically embedded into $\h^2\times\h^2$ as a totally geodesic $\h^2\times\R$, $\R$ being a geodesic of $\h^2$. Hence, for $H\neq0$, an $H$-CMC surface in $\s^2\times\R$ or $\h^2\times\R$ yields via these immersions an $|H|$-PMC surface in $\s^2\times\s^2$ or $\h^2\times\h^2$. Moreover these immersions are characterized as follows: $[\Phi_1]=[\Phi_2]$ if and only if $\Phi$ is a CMC immersion into  a totally geodesic $\s^2\times\s^1$ or a totally geodesic $\h^2\times\R$. Then, in this case we have $[\Phi]=[\Phi_1]$. For this we refer to \cite[page 788 and Theorem 1]{torralbotams12}.

Using \cite[Theorem 1]{torralbotams12} and our Theorem \ref{theo-Kconstante}, we will classify $H$-PMC surfaces in $\M^2_c\times\M^2_c$ with constant intrinsic curvature. We have to be careful to the fact that \cite[Theorem 1]{torralbotams12} is a correspondence between immersions and not surfaces.

We first recall examples from \cite{torralbotams12}.

\begin{example}[CMC surfaces in totally geodesic hypersurfaces, see {\cite[page 788]{torralbotams12}}] \label{ex1} Since $\M^2_c\times\R$ is isometrically immersed as a totally geodesic submanifold of $\M^2_c\times\M^2_c$ (as mentioned above), all $H$-CMC surfaces with constant intrinsic curvature given by Theorem \ref{theo-Kconstante} yield $H$-PMC surfaces in $\M^2_c\times\M^2_c$ with constant intrinsic curvature.
\end{example}

\begin{example}[products of constant curvature curves, see {\cite[page 789 and Example 1]{torralbotams12}}] If $\gamma_1$ and $\gamma_2$ are curves of constant curvatures $k_1$ and $k_2$ in $\M^2_c$ such that $k_1$ and $k_2$ are not both zero, then $\gamma_1\times\gamma_2$ is an $H$-PMC surface with $4H^2=k_1^2+k_2^2$. In particular, when $\gamma_1$ or $\gamma_2$ is a geodesic of $\M^2_c$, then this PMC surface is also one of those explained in Example \ref{ex1}.
\end{example}

\begin{example}[Torralbo-Urbano surface in $\h^2\times\h^2$, see {\cite[item 3 of Theorem 4 and Remark 6]{torralbotams12}}] Here we set $c=-1$. For $H\in(0,\frac12)$, Torralbo and Urbano introduced an explicit $H$-PMC isometric embedding of the hyperbolic plane of curvature $K=4H^2-1$ that does not lie in a totally geodesic $\h^2\times\R$.
%The Hopf differentials of this embedding vanish.
\end{example}

\subsection{New examples in $\h^2\times\h^2$}

We now describe new examples of $H$-PMC immersions into $\h^2\times\h^2$ with $H\in(0,\frac12)$ and intrinsic curvature $K=4H^2-1<0$.

We set $\Omega=\{z\in\C:\im z>0\}$ and we endow $\Omega$ with the metric $$\dif s^2=\frac{4|\dif z|^2}{K(z-\bar z)^2}.$$ This is the half-plane model for the hyperbolic plane of curvature $K$.
We assume that $\Omega$ is oriented.
 Its isometry group is $$\iso\Omega=\mathrm{PSL}_2(\R)\cup\mathrm{PSL}^-_2(\R),$$
where $\mathrm{PSL}^-_2(\R)=\{f\in\mathrm{GL}_2(\R):\det f=-1\}/\{\pm\mathrm{id}\}$, with $\mathrm{PSL}_2(\R)$ (group of orientation preserving isometries) acting on $\Omega$ by
 $$\left(\begin{array}{cc} \alpha & \beta \\ \gamma & \delta \end{array}\right)(z)=\frac{\alpha z+\beta}{\gamma z+\delta}$$
  and $\mathrm{PSL}^-_2(\R)$ (set of orientation reversing isometries) acting on $\Omega$ by $$\left(\begin{array}{cc} \alpha & \beta \\ \gamma & \delta \end{array}\right)(z)=\frac{\alpha\bar z+\beta}{\gamma\bar z+\delta}.$$ We also set $$D=\left\{\left(\begin{array}{cc} \alpha & 0 \\ 0 & \alpha^{-1} \end{array}\right):\alpha\in\R^*\right\},\quad
T=\left\{\left(\begin{array}{cc} \alpha & \beta \\ 0 & \alpha^{-1} \end{array}\right):\alpha\in\R^*,\beta\in\R\right\},$$
$$\xi=\left(\begin{array}{cc} 0 & 1 \\ -1 & 0 \end{array}\right),\quad
\eta=\left(\begin{array}{cc} -1 & 0 \\ 0 & 1 \end{array}\right),\quad
\zeta=\left(\begin{array}{cc} 0 & 1 \\ -1 & 1 \end{array}\right).$$ i.e., $$\xi(z)=-\frac1z,\quad\eta(z)=-\bar z,\quad\zeta(z)=\frac1{-z+1}.$$
For $f=\left(\begin{array}{cc} \alpha & \beta \\ \gamma & \delta \end{array}\right)\in\mathrm{PSL}_2(\R)$ we set $$\rho(f)=\frac{\beta\gamma}{\alpha\delta}\in\R\cup\{\infty\}.$$

Finally, for an immersion $\Phi:\Omega\to\h^2\times\R$ or $\Phi:\Omega\to\h^2\times\h^2$ we let $$G_\Phi=\{f\in\iso\Omega:[\Phi\circ f]=[\Phi]\}.$$ This is a subgroup of $\iso\Omega$.

Obviously classes of congruence of $H$-CMC isometric immersions of $\Omega$ into $\M^2_c\times\R$ are identical to classes of congruence of $-H$-CMC isometric immersions of $\Omega$ into $\M^2_c\times\R$ because of reflections with respect to a totally geodesic surface; such a reflection multiplies either $\nu$ or $\dif h$ by $-1$, see Remark \ref{rmksign}.  For $\pm H$-CMC isometric immersions $\Phi_j:\Omega\to\h^2\times\R$ ($j=1,2$) of data $(\nu_j,h_j)$, we have $[\Phi_1]=[\Phi_2]$ if and only if $\nu_2=\pm\nu_1$ and $\dif h_2=\pm\dif h_1$. Moreover, if $\Phi_2=\Phi_1\circ f$ for some $f\in\iso\Omega$, then $h_2=h_1\circ f$ and $\nu_2=\pm\nu_1\circ f$ according to whether $f$ preserves or reverses orientation. Hence $[\Phi_1\circ f]=[\Phi_1]$ if and only if $\nu_1\circ f=\pm\nu_1$ and $h_1\circ f=\pm h_1$.
%**************

We let $\mathcal{X}:\Omega\to\h^2\times\R$ be the ARL-immersion given in Example \ref{exarlsurf} and $\mathcal{Y}:\Omega\to\h^2\times\R$ the helicoidal immersion of Example \ref{helicoid}.  We first determine the groups $G_\mathcal{X}$ and $G_\mathcal{Y}$.

\begin{lemma} We have
\begin{eqnarray*}
 G_\mathcal{X} & = & T\cup T\eta \\
 & = & \left\{\left(\begin{array}{cc} \alpha & \beta \\ 0 & \alpha^{-1} \end{array}\right):\alpha\in\R^*,\beta\in\R\right\}
\cup\left\{\left(\begin{array}{cc} -\alpha & \beta \\ 0 & \alpha^{-1} \end{array}\right):\alpha\in\R^*,\beta\in\R\right\}.
\end{eqnarray*}
\end{lemma}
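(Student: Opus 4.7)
The plan is to leverage the characterization of the equivalence $[\mathcal{X}\circ f]=[\mathcal{X}]$ in terms of the data $(\nu,h)$ of $\mathcal{X}$ spelled out just before the lemma: $f\in G_\mathcal{X}$ if and only if $\nu\circ f=\pm\nu$ and $\dif h\circ f=\pm\dif h$ (with independent signs, coming from Remark \ref{rmksign} together with the orientation-dependent sign for $\nu$). Reading off Example \ref{exarlsurf}, $\mathcal{X}$ has constant nonzero angle function $\nu\equiv\nu_0$ and height function $h(z)=-\frac{2H}{\sqrt{-K}}\log\big(\sqrt{-K}\,\im z\big)$ (since $(z-\bar z)/(2i)=\im z$). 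In particular $\nu\circ f\equiv\nu_0$, which is compatible only with the $+$ sign on $\nu$ because $\nu_0\neq0$. The whole condition therefore reduces to
\[
\im\big(f(z)\big)=c\,(\im z)^{\pm 1}\qquad\text{for some }c>0,
\]
coming from $h\circ f=\pm h+\mathrm{const}$.

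Next I would compute $\im(f(z))$ for an arbitrary $f\in\iso\Omega$. For orientation-preserving $f(z)=(\alpha z+\beta)/(\gamma z+\delta)$ with $\alpha\delta-\beta\gamma=1$, the standard calculation yields $\im(f(z))=\im z/|\gamma z+\delta|^2$. For orientation-reversing $f(z)=(\alpha\bar z+\beta)/(\gamma\bar z+\delta)$ with $\alpha\delta-\beta\gamma=-1$ the same formula holds, because the coefficients are real (so $|\gamma\bar z+\delta|=|\gamma z+\delta|$) and the sign from $\det=-1$ exactly cancels the sign picked up from $\bar z$ in the numerator.

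Then I would dispose of the two sub-cases. In the $+$ sub-case $\im(f(z))=c\,\im z$, the quantity $|\gamma z+\delta|^2$ is constant on $\Omega$, which forces $\gamma=0$; combined with $\alpha\delta=\pm 1$, this produces exactly the upper-triangular matrices in $T$ (orientation-preserving) and $T\eta$ (orientation-reversing). In the $-$ sub-case $\im(f(z))=c/\im z$, writing $z=x+iy$ gives the identity $y^2=c(\gamma x+\delta)^2+c\gamma^2 y^2$ for all $x\in\R$, $y>0$, whose $x^2$-coefficient forces $\gamma=0$ and whose $y^2$-coefficient then forces $\delta=0$, a contradiction. So this sub-case contributes nothing, and we conclude $G_\mathcal{X}=T\cup T\eta$.

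The only truly delicate point is the correct catalog of admissible sign combinations; constancy of $\nu$ rules out the $-$ option on $\nu$ at once, so everything reduces to a transparent computation with the imaginary-part formula for M\"obius-type maps of the upper half-plane. No estimate or case analysis beyond this is expected to be needed.
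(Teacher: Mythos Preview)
Your proposal is correct and follows essentially the same idea as the paper. Both arguments exploit that $\nu$ is constant (so the condition on $\nu$ is automatic) and reduce everything to analyzing how an isometry $f$ transforms $\im z$ (equivalently $z-\bar z$), concluding that $\gamma=0$. The only difference is cosmetic: the paper computes $(h\circ f)_z$ via the chain rule for $f\in\mathrm{PSL}_2(\R)$ and then handles the orientation-reversing case by the coset trick $G_\mathcal{X}=T\cup T\eta$ after checking $h\circ\eta=h$, whereas you use the explicit formula $\im f(z)=\im z/|\gamma z+\delta|^2$ uniformly for both orientations and treat the $\pm$ sign on $\dif h$ as two sub-cases. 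Your route is marginally more elementary and symmetric; the paper's is marginally shorter. No gap.
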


\begin{proof}
 We recall from Example \ref{exarlsurf} that the angle function of $\mathcal{X}$ is constant and its height function is $$h(z)=-\frac{2H}{\sqrt{-K}}\log\left(\frac{\sqrt{-K}}{2i}(z-\bar z)\right),$$ so $$h_z(z)=-\frac{2H}{\sqrt{-K}(z-\bar z)}.$$

 We first consider an isometry $f=\left(\begin{array}{cc} \alpha & \beta \\ \gamma & \delta \end{array}\right)\in\mathrm{PSL}_2(\R)$. The height function of $\mathcal{X}\circ f$ is $h\circ f$ and we have, since $f$ is holomorphic,
 $$(h\circ f)_z(z)=(h_z\circ f)(z)f'(z)=-\frac{2H}{\sqrt{-K}}\frac{f'(z)}{f(z)-\overline{f(z)}}=-\frac{2H}{\sqrt{-K}(z-\bar z)}\frac{\gamma\bar z+\delta}{\gamma z+\delta}.$$
 Since $\nu$ is constant, we have $[\mathcal{X}\circ f]=[\mathcal{X}]$ if and only if $(h_z\circ f)f'=\pm h_z$, i.e., if and only if $\gamma=0$. This proves that $G_\mathcal{X}\cap\mathrm{PSL}_2(\R)=T$.

 Moreover, we have $h\circ\eta=h$, so $\eta\in G_\mathcal{X}$ and $G_\mathcal{X}=T\cup T\eta$.
 %or  $(\nu\circ f,(h_z\circ f)f')=(-\nu,-h_z)$.
% Since $\nu$ is a non zero constant, the second case is impossible and $[\mathcal{X}\circ f]=[\mathcal{X}]$ if and only if $(h_z\circ f)f'=h_z$,
\end{proof}

\begin{lemma} We have
$$G_\mathcal{Y}=D\cup D\xi=\left\{\left(\begin{array}{cc} \alpha & 0 \\ 0 & \alpha^{-1} \end{array}\right):\alpha\in\R^*\right\}\cup
\left\{\left(\begin{array}{cc} 0 & \beta \\ -\beta^{-1} & 0 \end{array}\right):\beta\in\R^*\right\}.$$
\end{lemma}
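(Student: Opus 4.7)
The strategy exploits the fact that, unlike for $\mathcal{X}$, the angle function of $\mathcal{Y}$ is not constant and vanishes precisely on the geodesic $\Gamma=\{z\in\Omega:\mathrm{Re}\,z=0\}$. Any $f\in G_\mathcal{Y}$ must therefore satisfy $\nu\circ f=\pm\nu$, and in particular must stabilize $\Gamma$ setwise. My plan is to first determine the full stabilizer of $\Gamma$ in $\iso\Omega$, and then use the height condition $\dif(h\circ f)=\pm\dif h$ to select those elements that actually lie in $G_\mathcal{Y}$.

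For the first step, I take $f=\begin{pmatrix}\alpha&\beta\\\gamma&\delta\end{pmatrix}\in\psl{\R}$ and compute directly, using $\alpha\delta-\beta\gamma=1$, that $f(iy)=(\alpha\gamma y^{2}+\beta\delta+iy)/(\gamma^{2}y^{2}+\delta^{2})$. This is purely imaginary for every $y>0$ if and only if $\alpha\gamma=0$ and $\beta\delta=0$, which combined with the determinant condition leaves exactly two subfamilies: $\beta=\gamma=0$, i.e., $f(z)=\lambda z$ with $\lambda>0$ (so $f\in D$), or $\alpha=\delta=0$, i.e., $f(z)=-\lambda/z$ with $\lambda>0$ (so $f\in D\xi$). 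The analogous computation for an orientation-reversing $g(z)=(a\overline{z}+b)/(c\overline{z}+d)$ with $ad-bc=-1$ yields the same algebraic conditions $ac=bd=0$ and leaves only the two candidates $g(z)=-\lambda\overline{z}$ and $g(z)=1/(\mu\overline{z})$ with $\lambda,\mu>0$.

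Next I substitute each candidate into $h(z)=\frac{2H}{\sqrt{-K}}\arcsinh\frac{i(z+\overline{z})}{z-\overline{z}}-\log|z|$ and test whether $\dif(h\circ f)=\pm\dif h$. For $f(z)=\lambda z$ the $\arcsinh$ argument is unchanged and $\log|f(z)|=\log\lambda+\log|z|$, yielding $h\circ f=h-\log\lambda$; for $f(z)=-\lambda/z$ both the $\arcsinh$ argument and $\log|z|$ reverse sign, yielding $h\circ f=-h-\log\lambda$. Hence $D\cup D\xi\subset G_\mathcal{Y}$. In contrast, for $g(z)=-\lambda\overline{z}$ the $\arcsinh$ term flips but $\log|g(z)|=\log\lambda+\log|z|$ does not, while for $g(z)=1/(\mu\overline{z})$ the $\arcsinh$ term is unchanged but $\log|g(z)|=-\log\mu-\log|z|$ contributes an extra $-\log|z|$; in either case the two non-constant parts of $h\circ g\mp h$ fail to cancel simultaneously, so $\dif(h\circ g)\neq\pm\dif h$ and these orientation-reversing candidates are excluded.

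The main obstacle is separating the orientation-preserving part of the stabilizer of $\Gamma$ (which does lie in $G_\mathcal{Y}$) from the orientation-reversing part (which does not), even though both fulfill $\nu\circ f=\pm\nu$. The distinction is forced by the summand $-\log|z|$ in $h$, which encodes the helicoidal pitch and transforms compatibly only under the dilations in $D$ and the $\pi$-rotations in $D\xi$.
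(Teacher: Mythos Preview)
Your approach is correct and rests on the same key observation as the paper: the zero set of $\nu$ is the imaginary axis $\Gamma$, and $\nu\circ f=\pm\nu$ forces $f$ to stabilize $\Gamma$, which yields exactly the conditions $\alpha\gamma=\beta\delta=0$. The only real difference is in the treatment of the orientation-reversing case. The paper argues indirectly: if some orientation-reversing $f$ belonged to $G_\mathcal{Y}$, then since $\nu\circ\eta=-\nu$ one would get $f\circ\eta\in\psl{\R}$ with $\nu\circ(f\circ\eta)=\pm\nu$, hence $f\circ\eta\in D\cup D\xi\subset G_\mathcal{Y}$, and therefore $\eta\in G_\mathcal{Y}$; a single computation of $(h\circ\eta)_z$ then gives a contradiction. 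You instead enumerate both orientation-reversing families in the stabilizer of $\Gamma$ and check each one directly against the height condition. Both routes work; the paper's reduction is slightly more economical (one computation instead of two), while yours is more explicit and avoids the intermediate group-theoretic step.

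One small point worth tightening: to conclude $D\cup D\xi\subset G_\mathcal{Y}$ you verify only the height condition $\dif(h\circ f)=\pm\dif h$, relying implicitly on the fact that elements of the stabilizer of $\Gamma$ automatically satisfy $\nu\circ f=\pm\nu$. This is indeed true here and trivial to check ($\nu(\lambda z)=\nu(z)$ and $\nu(-\lambda/z)=-\nu(z)$), but stabilizing the zero set of a function is in general strictly weaker than $\nu\circ f=\pm\nu$, so it is worth stating the verification explicitly.
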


\begin{proof}
We recall from Example \ref{helicoid} that the angle function of $\mathcal{Y}$ is $$\nu(z)=\frac{\sqrt{-K}}2\frac{z+\bar z}{|z|}$$ and its height function is
$$h(z)=\frac{2H}{\sqrt{-K}}\arcsinh\frac{i(z+\bar z)}{z-\bar z}-\log|z|,$$
so $$h_z(z)=-\frac{2H}{\sqrt{-K}}\frac{\bar z}{|z|(z-\bar z)}-\frac1{2z}.$$

We first consider an isometry $f=\left(\begin{array}{cc} \alpha & \beta \\ \gamma & \delta \end{array}\right)\in\mathrm{PSL}_2(\R)$. Then $$(\nu\circ f)(z)=\frac{\sqrt{-K}}2\frac{2\alpha\gamma|z|^2+(\alpha\delta+\beta\gamma)(z+\bar z)+2\beta\delta}{|\alpha z+\beta||\gamma z+\delta|}.$$ If $f\in G_\mathcal{Y}$, then $\nu\circ f=\pm\nu$ and, considering the points where these functions vanish, we get $\alpha\gamma=\beta\delta=0$, so $f\in D\cup D\xi$.

Conversely, we check that if $f\in D$ then $\nu\circ f=\nu$ and $(h\circ f)_z=h_z$, and if $f\in D\xi$ then $\nu\circ f=-\nu$ and $(h\circ f)_z=-h_z$, so $[\mathcal{Y}\circ f]=[\mathcal{Y}]$ in both cases.

This proves that $G_\mathcal{Y}\cap\mathrm{PSL}_2(\R)=D\cup D\xi$.

We now consider an isometry $f=\left(\begin{array}{cc} \alpha & \beta \\ \gamma & \delta \end{array}\right)\in\mathrm{PSL}^-_2(\R)$. Assume that $f\in G_\mathcal{Y}$. Then $\nu\circ f=\pm\nu$, and, since $\nu\circ\eta=-\nu$, we have $\nu\circ(f\circ\eta)=\pm\nu$. By the previous discussion we obtain that $f\circ\eta\in D\cup D\xi\subset G_\mathcal{Y}$.

Consequently we get $\eta\in G_\mathcal{Y}$. But $$(h\circ\eta)(z)=-\frac{2H}{\sqrt{-K}}\arcsinh\frac{i(z+\bar z)}{z-\bar z}-\log|z|,$$
so $$(h\circ\eta)_z(z)=\frac{2H}{\sqrt{-K}}\frac{\bar z}{|z|(z-\bar z)}-\frac1{2z},$$
so $(h\circ\eta)_z\neq\pm h_z$, which gives a contradiction.

Hence $G_\mathcal{Y}\cap\mathrm{PSL}^-_2(\R)=\emptyset$.
\end{proof}

\begin{definition}
 For $f\in\iso\Omega$, we let $\mathcal{A}_f:\Omega\to\h^2\times\h^2$ be an $H$-PMC immersion corresponding to the pair $([\mathcal{Y}],[\mathcal{X}\circ f])$; this immersion is unique up to congruences in $\h^2\times\h^2$.

 For $f\in\iso\Omega$, we let $\mathcal{B}_f:\Omega\to\h^2\times\h^2$ be an $H$-PMC immersion corresponding to the pair $([\mathcal{Y}],[\mathcal{Y}\circ f])$; this immersion is unique up to congruences in $\h^2\times\h^2$.
 \end{definition}

 \begin{remark}
  We note that if $f\in G_\mathcal{Y}$, then $[\mathcal{Y}]=[\mathcal{Y}\circ f]$ and so, by the discussion in \cite{torralbotams12} recalled in Section \ref{prelim}, $\mathcal{B}_f$ is a CMC immersion into a totally geodesic $\h^2\times\R$.
 \end{remark}

\begin{proposition} \label{equivxy}
If $f_1,f_2\in\iso\Omega$, then the immersions $\mathcal{A}_{f_1}$ and $\mathcal{A}_{f_2}$ have the same image in $\h^2\times\h^2$ up to congruences if and only if
there exist $k\in G_\mathcal{X}$ and $g\in G_\mathcal{Y}$ such that $$f_2=k\circ f_1\circ g.$$

Moreover, this property defines on $\iso\Omega$ and equivalence relation $\sim$ whose equivalence classes are $G_\mathcal{X} G_\mathcal{Y}$ (the class of $\mathrm{id}$) and $\iso\Omega\setminus G_\mathcal{X} G_\mathcal{Y}$ (the class of $\zeta$).
\end{proposition}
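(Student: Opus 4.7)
My plan is to prove the three assertions in turn: the ``iff'' characterization, the verification that $\sim$ is an equivalence relation, and the identification of the two classes.

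For the forward direction of the characterization, suppose $F(\mathcal{A}_{f_1}(\Omega))=\mathcal{A}_{f_2}(\Omega)$ for some isometry $F$ of $\h^2\times\h^2$. Then $F\circ\mathcal{A}_{f_1}$ and $\mathcal{A}_{f_2}$ are two isometric immersions of $(\Omega,\dif s^2)$ onto the same surface, so they differ by a self-isometry $g\in\iso\Omega$: $F\circ\mathcal{A}_{f_1}=\mathcal{A}_{f_2}\circ g$. Hence $\mathcal{A}_{f_1}$ and $\mathcal{A}_{f_2}\circ g$ are congruent immersions, and by the Torralbo--Urbano correspondence \cite[Theorem 1]{torralbotams12} their unordered TU-pairs coincide:
\[
\{[\mathcal{Y}\circ g],\,[\mathcal{X}\circ f_2\circ g]\}=\{[\mathcal{Y}],\,[\mathcal{X}\circ f_1]\}.
\]
Since the ARL-surface $\mathcal{X}(\Omega)$ and the helicoidal surface $\mathcal{Y}(\Omega)$ are not congruent in $\h^2\times\R$ (Remark after Theorem \ref{theo-Kconstante}), the two classes cannot be swapped, which forces $[\mathcal{Y}\circ g]=[\mathcal{Y}]$ and $[\mathcal{X}\circ f_2\circ g]=[\mathcal{X}\circ f_1]$. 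The first gives $g\in G_\mathcal{Y}$, and the second, after right-composing by $f_1^{-1}$ inside the immersion, gives $f_2\circ g\circ f_1^{-1}\in G_\mathcal{X}$. Setting $k=f_2\circ g\circ f_1^{-1}$ yields $f_2=k\circ f_1\circ g^{-1}$ with $k\in G_\mathcal{X}$ and $g^{-1}\in G_\mathcal{Y}$. For the converse, given $f_2=k\circ f_1\circ g$, the TU-pair of $\mathcal{A}_{f_2}\circ g^{-1}$ reduces to that of $\mathcal{A}_{f_1}$ by $g^{-1}\in G_\mathcal{Y}$ and the general fact that $k\in G_\mathcal{X}$ implies $[\mathcal{X}\circ k\circ h]=[\mathcal{X}\circ h]$ for every $h$.

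That $\sim$ is an equivalence relation is immediate from the group structures of $G_\mathcal{X}$ and $G_\mathcal{Y}$. The class of $\mathrm{id}$ is $G_\mathcal{X} G_\mathcal{Y}$ and the class of $\zeta$ is $G_\mathcal{X}\zeta G_\mathcal{Y}$; these are disjoint if and only if $\zeta\notin G_\mathcal{X} G_\mathcal{Y}$. This follows geometrically: $G_\mathcal{X}$ fixes $\infty$ and $G_\mathcal{Y}$ preserves the set $\{0,\infty\}$, so every $kg\in G_\mathcal{X} G_\mathcal{Y}$ sends either $0$ or $\infty$ to $\infty$, whereas $\zeta(0)=1$ and $\zeta(\infty)=0$.

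It remains to show that every $f\in\iso\Omega\setminus G_\mathcal{X} G_\mathcal{Y}$ lies in $G_\mathcal{X}\zeta G_\mathcal{Y}$. For such $f$, the values $a:=f(0)$ and $b:=f(\infty)$ are distinct real numbers. The group $G_\mathcal{X}$ acts on $\R$ as the full affine group, with $T$ giving the positive-slope affine maps and $T\eta$ the negative-slope ones, hence simply transitively on ordered pairs of distinct reals. Choose $k_1\in G_\mathcal{X}$ whose orientation type matches that of $f$, so that $k_1\circ f$ is orientation-preserving; the sign constraint on the slope of $k_1$ forces a unique choice between sending $(a,b)$ to $(0,1)$ or to $(1,0)$. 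In either case $\{k_1 f(0), k_1 f(\infty)\}=\{0,1\}=\zeta(\{0,\infty\})$, and $\zeta^{-1}\circ k_1\circ f$ is an orientation-preserving isometry preserving $\{0,\infty\}$ setwise, hence lies in $D\cup D\xi=G_\mathcal{Y}$. Writing $g=\zeta^{-1}\circ k_1\circ f\in G_\mathcal{Y}$ gives $f=k_1^{-1}\circ\zeta\circ g\in G_\mathcal{X}\zeta G_\mathcal{Y}$. I expect the most delicate point to be justifying the initial factorization $F\circ\mathcal{A}_{f_1}=\mathcal{A}_{f_2}\circ g$ with $g\in\iso\Omega$; once this step is granted, the rest is essentially bookkeeping combined with the non-congruence of $\mathcal{X}(\Omega)$ and $\mathcal{Y}(\Omega)$.
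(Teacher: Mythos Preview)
Your proof is correct. For the characterization (the ``iff''), your argument parallels the paper's almost exactly: the paper also reduces to $g\in G_\mathcal{Y}$ and $f_1\circ g\circ f_2^{-1}\in G_\mathcal{X}$ (with the roles of $f_1,f_2$ swapped relative to yours), and your explicit handling of the unordered-pair issue via the non-congruence of $\mathcal{X}(\Omega)$ and $\mathcal{Y}(\Omega)$ is a point the paper simply glosses over. The ``delicate step'' you flag --- passing from equal images to $F\circ\mathcal{A}_{f_1}=\mathcal{A}_{f_2}\circ g$ with $g\in\iso\Omega$ --- is likewise asserted without further justification in the paper; in context it is taken as the meaning of ``same image up to congruences'' for complete isometric immersions of a simply connected domain.

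For the identification of the two equivalence classes you take a genuinely different route. The paper observes that $G_\mathcal{X}G_\mathcal{Y}$ consists precisely of matrices with $\gamma=0$ or $\delta=0$, and for each $f$ with $\gamma\delta\neq0$ writes down, case by case according to the sign of $\gamma/\delta$ (and reducing $f\in\mathrm{PSL}^-_2(\R)$ via $\eta\in G_\mathcal{X}$), an explicit factorization $f=k\circ\zeta\circ g$. Your boundary-action argument --- $G_\mathcal{X}$ is the full affine stabilizer of $\infty$, $G_\mathcal{Y}$ is the setwise stabilizer of $\{0,\infty\}$ in $\mathrm{PSL}_2(\R)$, so membership in $G_\mathcal{X}G_\mathcal{Y}$ is detected by whether $f^{-1}(\infty)\in\{0,\infty\}$, and any other $f$ can be normalized so that $\zeta^{-1}\circ k_1\circ f$ fixes $\{0,\infty\}$ --- is more conceptual and avoids the matrix case analysis entirely. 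The paper's approach has the minor advantage of yielding explicit representatives, but for the statement as written your method is cleaner and makes the ``exactly two classes'' conclusion transparent.
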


\begin{proof}
 The immersions $\mathcal{A}_{f_1}$ and $\mathcal{A}_{f_2}$ have the same image in $\h^2\times\h^2$ up to congruences if and only if there exists $g\in\iso\Omega$ such that $$[\mathcal{A}_{f_1}\circ g]=[\mathcal{A}_{f_2}].$$ This condition is equivalent to $$([\mathcal{Y}\circ g],[\mathcal{X}\circ f_1\circ g])=([\mathcal{Y}],[\mathcal{X}\circ f_2]),$$
 i.e., to $$[\mathcal{Y}\circ g]=[\mathcal{Y}]\quad\textrm{and}\quad[\mathcal{X}\circ f_1\circ g\circ f_2^{-1}]=[\mathcal{X}],$$
 i.e., to $$g\in G_\mathcal{Y}\quad\textrm{and}\quad f_1\circ g\circ f_2^{-1}\in G_\mathcal{X}.$$ Setting $k=(f_1\circ g\circ f_2^{-1})^{-1}$, this proves the first assertion.

 We now study the equivalence classes of $\sim$.

 %prove that properties (2) and (3) are equivalent. Since property (2) defines an equivalence relation on $\mathrm{PSL}_2(\R)$, that we will denote by $\sim$, it suffices to prove that this equivalence relation has exactly two equivalence classes, one of which is $G_\mathcal{X} G_\mathcal{Y}$.

It is clear that $G_\mathcal{X} G_\mathcal{Y}$ is the equivalence class of $\mathrm{id}$.

We also notice that
$$G_\mathcal{X}G_\mathcal{Y}=G_\mathcal{X}\cup G_\mathcal{X}\xi=\left\{\left(\begin{array}{cc} \alpha & \beta \\ \gamma & \delta \end{array}\right)\in\iso\Omega:\gamma=0\textrm{ or }\delta=0\right\}.$$

Let $f=\left(\begin{array}{cc} \alpha & \beta \\ \gamma & \delta \end{array}\right)\in\iso\Omega\setminus G_\mathcal{X} G_\mathcal{Y}$. Then $\gamma\neq0$ and $\delta\neq0$.

If $f\in\mathrm{PSL}_2(\R)$ and $\frac\gamma\delta<0$ then
$$\left(\begin{array}{cc} \alpha & \beta \\ \gamma & \delta \end{array}\right)
=\left(\begin{array}{cc} \frac1{\delta\mu} & -\frac\alpha\mu \\ 0 & \delta\mu \end{array}\right)
\left(\begin{array}{cc} 0 & 1 \\ -1 & 1 \end{array}\right)
\left(\begin{array}{cc} \mu & 0 \\ 0 & \mu^{-1} \end{array}\right)$$ for $\mu^2=-\frac\gamma\delta$. If $f\in\mathrm{PSL}_2(\R)$ and $\frac\gamma\delta>0$ then
$$\left(\begin{array}{cc} \alpha & \beta \\ \gamma & \delta \end{array}\right)
=\left(\begin{array}{cc} -\frac\mu\delta & -\frac\beta\mu \\ 0 & -\frac\delta\mu \end{array}\right)
\left(\begin{array}{cc} 0 & 1 \\ -1 & 1 \end{array}\right)
\left(\begin{array}{cc} 0 & \mu \\ -\mu^{-1} & 0 \end{array}\right)$$ for $\mu^2=\frac\delta\gamma$. Hence in both cases we have $f\sim\left(\begin{array}{cc} 0 & 1 \\ -1 & 1 \end{array}\right)$.

If $f\in\mathrm{PSL}^-_2(\R)$, then, since $\eta\in G_\mathcal{X}$, $\eta\circ f\in\mathrm{PSL}_2(\R)\setminus G_\mathcal{X} G_\mathcal{Y}$, so $f\sim\eta\circ f\sim\left(\begin{array}{cc} 0 & 1 \\ -1 & 1 \end{array}\right)$.

This proves that $\iso\Omega\setminus G_\mathcal{X} G_\mathcal{Y}$ is included in the equivalence class of $\left(\begin{array}{cc} 0 & 1 \\ -1 & 1 \end{array}\right)$, and so equal to it.
\end{proof}

\begin{proposition} \label{equivyy}
If $f_1,f_2\in\iso\Omega$, then the immersions $\mathcal{B}_{f_1}$ and $\mathcal{B}_{f_2}$ have the same image in $\h^2\times\h^2$ up to congruences if and only if there exist $g_1,g_2\in G_\mathcal{Y}$ such that $$f_2=g_1\circ f_1\circ g_2.$$

This property defines on $\iso\Omega$ an equivalence relation $\approx$. For $s\in\R$ we set $$m_s=\left(\begin{array}{cc} s & 1-s \\ -1 & 1 \end{array}\right).$$ Then each equivalence class has exactly one representative in the set
$$\{\mathrm{id},\eta\}\cup\{m_s:s\in[0,+\infty)\}\cup\{\eta\circ m_s:s\in[0,+\infty)\}.$$
%
%among $\mathrm{id}$, $m_s$ for some $$, $\eta$ and $\eta\circ m_s$ for some $s\in[0,+\infty)$.
%
%We denote by $\equiv$ the equivalence relation on $\R$ defined by
%$$s\equiv t\Longleftrightarrow(s=t\textrm{ or }(s+t=1\textrm{ and }s\notin[0,1]))$$ and by $\underline{\;\;}$ an equivalence class for $\equiv$ or $\approx$. Then there is a homeomorphism $$B:(\R/\equiv)\times\{-1,1\}\to(\iso\Omega\setminus G_\mathcal{Y})/\approx$$ such that $$B(\underline s,1)=\underline{m_s},\quad B(\underline s,-1)=\underline{\eta\circ m_s},$$ for all $s\in\R$.
\end{proposition}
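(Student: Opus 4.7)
The plan is to mirror the structure of Proposition~\ref{equivxy} for the first assertion, then to analyze the resulting double coset structure on $\iso\Omega$ by a transitive action argument. For the ``if and only if'' part, the Torralbo--Urbano correspondence reduces the condition that $\mathcal{B}_{f_1}$ and $\mathcal{B}_{f_2}$ have congruent images to the existence of $g\in\iso\Omega$ with
\[\{[\mathcal{Y}\circ g],[\mathcal{Y}\circ f_1\circ g]\}=\{[\mathcal{Y}],[\mathcal{Y}\circ f_2]\}\]
as unordered pairs. Matching the first entries on each side gives $g\in G_\mathcal{Y}$ together with $f_1\circ g\circ f_2^{-1}\in G_\mathcal{Y}$, hence $f_2=g_1\circ f_1\circ g_2$ for some $g_1,g_2\in G_\mathcal{Y}$; the swapped matching is to be reduced to the previous one by replacing $g$ with an appropriate composition involving $f_2$, using that the pair for $\mathcal{B}_f$ is genuinely unordered. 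That $\approx$ is an equivalence relation then follows at once from the group structure of $G_\mathcal{Y}$.

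For the classification of representatives, the first step is the direct check that $\eta$ normalizes $G_\mathcal{Y}$, namely $\eta D\eta^{-1}=D$ and $\eta\xi\eta^{-1}=\xi$ in $\mathrm{PSL}_2(\R)$. Since $G_\mathcal{Y}\subset\mathrm{PSL}_2(\R)$ and $\iso\Omega=\mathrm{PSL}_2(\R)\sqcup\mathrm{PSL}^-_2(\R)$, the double cosets split accordingly; because $\eta\in\mathrm{PSL}^-_2(\R)$ normalizes $G_\mathcal{Y}$, left-multiplication by $\eta$ yields a bijection between the $G_\mathcal{Y}$-double cosets in $\mathrm{PSL}_2(\R)$ and those in $\mathrm{PSL}^-_2(\R)$. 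Hence it suffices to find representatives in $\mathrm{PSL}_2(\R)$; the $\mathrm{PSL}^-_2(\R)$ list is then obtained by pre-multiplication by $\eta$.

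The key geometric observation is that $G_\mathcal{Y}$ is exactly the stabilizer in $\mathrm{PSL}_2(\R)$ of the unordered boundary pair $\{0,\infty\}$, so $\mathrm{PSL}_2(\R)/G_\mathcal{Y}$ is in bijection with unordered pairs of distinct points of $\partial\Omega=\R\cup\{\infty\}$ via $f\mapsto\{f(0),f(\infty)\}$, and $G_\mathcal{Y}$-double cosets correspond to $G_\mathcal{Y}$-orbits on such pairs. Using that $D$ acts transitively on $\R^*_+$ and on $\R^*_-$ while fixing $0$ and $\infty$, and that $\xi\colon x\mapsto -1/x$ swaps $\{0,\infty\}$ and swaps the two components of $\R^*$, a direct case analysis yields the list of orbits: the singleton $\{0,\infty\}$; two ``mixed'' orbits containing pairs with exactly one entry in $\{0,\infty\}$, distinguished by the sign of the nonzero entry once normalized; a one-parameter family of orbits of same-sign pairs indexed by the normalized ratio $t\in(1,+\infty)$; and a one-parameter family of opposite-sign pairs indexed by $b\in(0,+\infty)$ after normalization to $\{1,-b\}$.

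To conclude, I would compute $m_s(\{0,\infty\})=\{1-s,-s\}$ and verify the following correspondences: $\mathrm{id}$ represents $\{0,\infty\}$, while $m_0$ and $m_1$ give the two mixed orbits (with boundary pairs $\{0,1\}$ and $\{0,-1\}$); the range $s\in(0,1)$ bijects with the opposite-signs family via $b=s/(1-s)$; and $s\in(1,+\infty)$ bijects with the same-sign family via $t=s/(s-1)$. Applying the $\eta$-bijection then produces the analogous list in $\mathrm{PSL}^-_2(\R)$, and the uniqueness of the representative in each class follows from the bijectivity of these parameter assignments. The step I expect to require the most care is handling the swapped matching in the first assertion, which a priori also produces elements of $G_\mathcal{Y} f_1^{-1}G_\mathcal{Y}$; reconciling this with the stated single-sided condition requires a careful tracking of reparametrizations. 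Once that is settled, the remainder of the argument is the orbit enumeration sketched above.
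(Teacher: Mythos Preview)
Your approach to the classification of double cosets is genuinely different from the paper's: you identify $G_\mathcal{Y}\backslash\mathrm{PSL}_2(\R)/G_\mathcal{Y}$ with $G_\mathcal{Y}$-orbits on unordered pairs of distinct boundary points (using that $G_\mathcal{Y}$ is the set-stabilizer of $\{0,\infty\}$), whereas the paper proceeds by explicit matrix factorizations, reducing an arbitrary $f$ to some $m_s$ through sign-by-sign case analysis (Claims~A and~C) and then determining when $m_s\approx m_t$ via the invariant $\rho$ together with a further sign argument (Claim~B). Your route is more conceptual and makes the parametrization by $s\in[0,+\infty)$ transparent; the paper's route is more hands-on but produces the normal forms $m_s$ directly without invoking the boundary action.

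The step you flag at the end, however, is not merely delicate: the proposed reduction actually fails. Take $f_1=m_0$. Then $m_0(\{0,\infty\})=\{0,1\}$ lies in one of your two ``mixed'' orbits, while $m_0^{-1}(\{0,\infty\})=\{1,\infty\}$ lies in the \emph{other} mixed orbit (the one containing $\{0,-1\}=m_1(\{0,\infty\})$); hence $m_0^{-1}\notin G_\mathcal{Y}\, m_0\, G_\mathcal{Y}$. Yet $\mathcal{B}_{m_0}$ and $\mathcal{B}_{m_0^{-1}}$ do have the same image up to congruence: reparametrize by $g=m_0^{-1}$ and use the swap isometry $(x_1,x_2)\mapsto(x_2,x_1)$ of $\h^2\times\h^2$ to identify the unordered Torralbo--Urbano pairs. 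So the swapped matching genuinely contributes the double coset $G_\mathcal{Y} f_1^{-1} G_\mathcal{Y}$, and for the two mixed orbits this is distinct from $G_\mathcal{Y} f_1 G_\mathcal{Y}$; your hoped-for reduction to the unswapped case cannot be carried out there. The paper's proof, which simply declares the first assertion ``analogous to'' Proposition~\ref{equivxy}, does not address the swap either---the analogy is imperfect precisely because in the $\mathcal{B}$-case both entries of the pair are of $\mathcal{Y}$-type.
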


\begin{proof}
The proof of the first assertion is analogous to that of Proposition \ref{equivxy}.

We now study the equivalence classes of $\approx$. We first notice that:
\begin{itemize}
 \item if $f_1\approx f_2$, then $\det f_1=\det f_2$,
 \item the equivalence class of $\mathrm{id}$ is $G_\mathcal{Y}$,
 \item if $f\in\mathrm{PSL}_2(\R)\setminus G_\mathcal{Y}$ and $g\in D$, then $\rho(g\circ f)=\rho(f\circ g)=\rho(f)$,
 \item if $f\in\mathrm{PSL}_2(\R)\setminus G_\mathcal{Y}$ and $g\in D\xi$, then $\rho(g\circ f)=\rho(f\circ g)=\rho(f)^{-1}$.
\end{itemize}

{\bf Claim A.} If $f=\left(\begin{array}{cc} \alpha & \beta \\ \gamma & \delta \end{array}\right)\in\mathrm{PSL}_2(\R)\setminus G_\mathcal{Y}$, then $f\approx m_s$ for some $s\in\R$.

If $\gamma\neq0$ and $\delta\neq0$, then:
\begin{itemize}
 \item if $\gamma$ and $\delta$ have the same sign, then $$\left(\begin{array}{cc} \alpha & \beta \\ \gamma & \delta \end{array}\right)
 =\left(\begin{array}{cc} -\frac\mu\delta & 0 \\ 0 & -\frac\delta\mu \end{array}\right)
 \left(\begin{array}{cc} s & 1-s \\ -1 & 1 \end{array}\right)
 \left(\begin{array}{cc} 0 & \mu \\ -\mu^{-1} & 0 \end{array}\right)$$ with $s=-\beta\gamma$ and $\mu^2=\frac\delta\gamma$,
\item if $\gamma$ and $\delta$ have opposite signs, then $$\left(\begin{array}{cc} \alpha & \beta \\ \gamma & \delta \end{array}\right)
 =\left(\begin{array}{cc} \frac1{\delta\mu} & 0 \\ 0 & \delta\mu \end{array}\right)
 \left(\begin{array}{cc} s & 1-s \\ -1 & 1 \end{array}\right)
 \left(\begin{array}{cc} \mu & 0 \\ 0 & \mu^{-1} \end{array}\right)$$ with $s=\alpha\delta$ and $\mu^2=-\frac\gamma\delta$.
\end{itemize}
Hence, in both cases, $f\approx m_s$ for some $s\in\R$.

 If $\gamma=0$, then $\alpha\neq0$ and, since $f\notin G_{\mathcal{Y}}$, $\beta\neq 0$. Similarly, if $\delta=0$, then $\beta\neq0$ and, since $f\notin G_\mathcal{Y}$, $\alpha\neq 0$. Hence, in both cases, applying the previous argument to $\xi\circ f=\left(\begin{array}{cc} \gamma & \delta \\ -\alpha & -\beta \end{array}\right)$ we obtain that $f\approx\xi\circ f\approx m_s$ for some $s\in\R$.

 This proves Claim A.

 {\bf Claim B.} Let $s,t\in\R$. Then $m_s\approx m_t$ if and only if
 \begin{itemize}
  \item $s=t$,
  \item or $s+t=1$ and $s\notin[0,1]$.
 \end{itemize}

 First, if $s>1$ or $s<0$, then $-s$ and $s-1$ do not vanish and have opposite signs, so by the previous arguments we have
 $$m_s\approx\xi\circ m_s=\left(\begin{array}{cc} -1 & 1 \\ -s & s-1 \end{array}\right)\approx m_{1-s}.$$

 Conversely, let $s$ and $t$ such that $m_s\approx m_t$ and $s\neq t$. Then $\rho(m_s)=\rho(m_t)$ or $\rho(m_s)=\rho(m_t)^{-1}$. In the first case we have $\frac s{s-1}=\frac t{t-1}$, so $s=t$. In the second case we have $\frac s{s-1}=\frac{t-1}t$, so $t=1-s$.

 Assume that $s\in[0,1]$. Then $s\geqslant0$ and $1-s\geqslant0$. Let $g_1,g_2\in G_{\mathcal{Y}}$ such that $m_{1-s}=g_1\circ m_s\circ g_2$. Considering the signs of the coefficients, both $m_s$ and $m_{1-s}$ are of type $\pm\left(\begin{array}{cc} \geqslant0 & \geqslant0 \\ <0 & >0 \end{array}\right)$. Since $\rho(m_{1-s})=\rho(m_s)^{-1}$, we have $g_1\in D$ and $g_2\in D\xi$ or the contrary. But in the first case we obtain that $m_{1-s}$ is of type $\pm\left(\begin{array}{cc} \leqslant0 & \geqslant0 \\ <0 & <0 \end{array}\right)$ and in the second case of type $\pm\left(\begin{array}{cc} <0 & >0 \\ \leqslant0 & \leqslant0 \end{array}\right)$, which is a contradiction in both cases. Hence we cannot have $m_{1-s}\approx m_s$ if $s\in[0,1]$.

 This proves Claim B.

 {\bf Claim C.} If $f\in\mathrm{PSL}_2(\R)\setminus G_\mathcal{Y}$, then there exists a unique $s\in[0,+\infty)$ such that $f\approx m_s$.

 By Claim A, there exists $t\in\R$ such that $f\approx m_t$. If $t\geqslant0$, set $s=t$; if $t<0$, set $s=1-t$. Then in both cases we have $s\in[0,+\infty)$ and $f\approx m_t\approx m_s$ by Claim B.

 And if there is $u\in[0,+\infty)$ such that $u\neq s$ and $f\approx m_u$, then by Claim B we have $s+u=1$ and $s>1$, so $u<0$, which is a contradiction.

 This proves Claim C.

 {\bf Claim D.} Let $f_1,f_2\in\iso\Omega$. Then $f_1\approx f_2$ if and only if $\eta\circ f_1\approx\eta\circ f_2$.

 This is a consequence of the fact that $\eta$ commutes with all elements of $G_\mathcal{Y}$.

 {\bf Claim E.} If $f\in\mathrm{PSL}^-_2(\R)$, then either $f\approx\eta$ or there exists a unique $s\in[0,+\infty)$ such that $f\approx\eta\circ m_s$.

 This is a consequence of Claim D and the fact that, either $\eta\circ f\in G_\mathcal{Y}$ and then $\eta\circ f\approx\mathrm{id}$, or $\eta\circ f\in\mathrm{PSL}_2(\R)\setminus G_\mathcal{Y}$ and then we conclude by Claim C.
\end{proof}

We now determine, for each of these immersions, the groups of isometries of $\Omega$ that are induced by ambient isometries.
\begin{proposition} \label{groups}
 We have $$G_{\mathcal{A}_\mathrm{id}}=D,$$
  $$G_{\mathcal{A}_\zeta}=\{\mathrm{id}\},$$
    $$G_{\mathcal{B}_\eta}=G_\mathcal{Y},$$
   $$G_{\mathcal{B}_{m_s}}=G_{\mathcal{B}_{\eta\circ m_s}}=\{\mathrm{id}\}\textrm{ if }s\notin(0,1),$$
   $$G_{\mathcal{B}_{m_s}}=G_{\mathcal{B}_{\eta\circ m_s}}
   =\left\{\mathrm{id},\left(\begin{array}{cc} 0 & \sqrt{\frac{1-s}s} \ \\ -\sqrt{\frac s{1-s}} & 0 \end{array}\right)\right\}\textrm{ if }s\in(0,1).$$
\end{proposition}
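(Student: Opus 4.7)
The plan is to use the Torralbo--Urbano correspondence to translate each condition $[\Phi\circ g]=[\Phi]$ into conditions on the two associated CMC immersions into $\h^2\times\R$. For an $H$-PMC immersion $\Phi$ represented by the pair $([\Phi_1],[\Phi_2])$, the condition becomes $([\Phi_1\circ g],[\Phi_2\circ g])=([\Phi_1],[\Phi_2])$. Since $\mathcal{X}$ and $\mathcal{Y}$ are non-congruent by Theorem \ref{theo-Kconstante}, the two entries of the pair for $\mathcal{A}_f$ cannot be interchanged, so the condition reads $g\in G_\mathcal{Y}$ and $fgf^{-1}\in G_\mathcal{X}$; analogously, for $\mathcal{B}_f$ it reads $g\in G_\mathcal{Y}$ and $fgf^{-1}\in G_\mathcal{Y}$.

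With this reduction, $G_{\mathcal{A}_{\mathrm{id}}}=G_\mathcal{Y}\cap G_\mathcal{X}$ is determined by inspection: $D\subset T$, whereas every element of $D\xi$ has a non-zero lower-left entry and so cannot lie in $T\cup T\eta$; hence $G_{\mathcal{A}_{\mathrm{id}}}=D$. For $\mathcal{A}_\zeta$, I would compute $\zeta g\zeta^{-1}$ for $g\in D$ and $g\in D\xi$ separately and require the lower-left entry to vanish; the resulting equations $\alpha^{-1}-\alpha=0$ and $\beta^{-1}+\beta=0$ (the second having no real solution) leave only $g=\mathrm{id}$.

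For $\mathcal{B}_\eta$ a direct verification shows that $\eta$ centralizes every element of $G_\mathcal{Y}$: both $z\mapsto\alpha^2 z$ and $z\mapsto-\beta^2/z$ commute with $\eta(z)=-\bar z$. Hence $\eta g\eta^{-1}=g\in G_\mathcal{Y}$ automatically and $G_{\mathcal{B}_\eta}=G_\mathcal{Y}$. The same centralization gives $(\eta m_s)g(\eta m_s)^{-1}=\eta(m_sgm_s^{-1})\eta^{-1}$, which lies in $G_\mathcal{Y}$ if and only if $m_sgm_s^{-1}$ does; hence $G_{\mathcal{B}_{\eta\circ m_s}}=G_{\mathcal{B}_{m_s}}$.

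The core computation is therefore the case $\mathcal{B}_{m_s}$. For $g\in D$ parametrized by $\alpha$, expanding $m_sgm_s^{-1}$ and setting either the off-diagonal or the diagonal to zero produces the conditions $\alpha^2=1$ (giving $g=\mathrm{id}$) or the incompatible pair $\alpha^2=(s-1)/s$ and $\alpha^2=s/(s-1)$; only $g=\mathrm{id}$ arises. For $g\in D\xi$ parametrized by $\beta$, membership in $D$ leads to $\beta^2=-1$ or $\beta^2=-(s-1)^2/s^2$, both unsolvable, while membership in $D\xi$ reduces to the single equation $\beta^2=(1-s)/s$, which has a real solution exactly when $s\in(0,1)$, yielding the explicit element displayed in the statement. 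The main obstacle will be executing these conjugation computations cleanly and handling the case analysis on whether the conjugate lies in $D$ or in $D\xi$ without arithmetic slips.
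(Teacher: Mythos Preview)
Your proposal is correct and follows essentially the same route as the paper: translate $[\Phi\circ g]=[\Phi]$ via the Torralbo--Urbano correspondence into the conditions $g\in G_\mathcal{Y}$ together with $fgf^{-1}\in G_\mathcal{X}$ (for $\mathcal{A}_f$) or $fgf^{-1}\in G_\mathcal{Y}$ (for $\mathcal{B}_f$), use the centralizing property of $\eta$ for the $\mathcal{B}_\eta$ and $\mathcal{B}_{\eta\circ m_s}$ cases, and then perform the case analysis on $g\in D$ versus $g\in D\xi$. The only stylistic difference is that the paper streamlines the case analysis by eigenvalue considerations (e.g.\ noting that conjugation preserves eigenvalues, so $g\in D\xi$, having eigenvalues $\pm i$, can never be conjugate to an element of $T$ or $D$), whereas you compute the conjugates $m_s g m_s^{-1}$ and $\zeta g\zeta^{-1}$ explicitly and read off the vanishing conditions on the entries; both lead to exactly the same equations.
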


\begin{proof}
Let $g\in\iso\Omega$.

We have $[\mathcal{A}_\mathrm{id}\circ g]=[\mathcal{A}_\mathrm{id}]$ if and only if $$([\mathcal{Y}\circ g],[\mathcal{X}\circ g])=([\mathcal{Y}],[\mathcal{X}]),$$ i.e., if and only if $g\in G_\mathcal{Y}\cap G_\mathcal{X}=D$. Hence $G_{\mathcal{A}_\mathrm{id}}=D$.

We have $[\mathcal{A}_\zeta\circ g]=[\mathcal{A}_\zeta]$ if and only if $$([\mathcal{Y}\circ g],[\mathcal{X}\circ\zeta\circ g])=([\mathcal{Y}],[\mathcal{X}\circ\zeta]),$$ i.e., if and only if $g\in G_\mathcal{Y}$ and $\hat g=\zeta\circ g\circ\zeta^{-1}\in G_\mathcal{X}$. Considering the eigenvalues of $g$ and $\hat g$, we obtain that $g=\left(\begin{array}{cc} \lambda & 0 \\ 0 & \lambda^{-1} \end{array}\right)$ for some $\lambda\in\R^*$ and $\hat g=\left(\begin{array}{cc} \alpha & \beta \\ 0 & \alpha^{-1} \end{array}\right)$ for $\alpha=\lambda$ or $\lambda^{-1}$ and some $\beta\in\R$. Reporting in the relation $\hat g\circ\zeta=\zeta\circ g$ we get $\lambda=\pm1$ and $\beta=0$, hence $g=\hat g=\mathrm{id}$. This proves that $G_{\mathcal{A}_\zeta}=\{\mathrm{id}\}$.

We have $[\mathcal{B}_{\eta}\circ g]=[\mathcal{B}_{\eta}]$ if and only if $$([\mathcal{Y}\circ g],[\mathcal{Y}\circ\eta\circ g])=([\mathcal{Y}],[\mathcal{Y}\circ\eta]),$$ i.e., if and only if $g\in G_\mathcal{Y}$ and $\eta\circ g\circ\eta\in G_\mathcal{Y}$. Since $\eta$ commutes with elements in $G_\mathcal{Y}$, this condition is equivalent to
$g\in G_\mathcal{Y}$. This proves that $G_{\mathcal{B}_\eta}=G_\mathcal{Y}$.

Let $s\in\R$. We have $[\mathcal{B}_{m_s}\circ g]=[\mathcal{B}_{m_s}]$ if and only if $$([\mathcal{Y}\circ g],[\mathcal{Y}\circ m_s\circ g])=([\mathcal{Y}],[\mathcal{Y}\circ m_s]),$$ i.e., if and only if $g\in G_\mathcal{Y}$ and $\hat g=m_s\circ g\circ {m_s}^{-1}\in G_\mathcal{Y}$. Considering the eigenvalues of $g$ and $\hat g$ we can see that there are two cases:
 \begin{itemize}
  \item either $g\in D$ and $\hat g=g$ or $g^{-1}$, and then we can compute that the only possibility is $g=\hat g=\mathrm{id}$,
\item or $g\in D\xi$ and $\hat g\in D\xi$, and then we can compute that the only possibility is $s\in(0,1)$, $g=\left(\begin{array}{cc} 0 & \mu \\ -\mu^{-1} & 0 \end{array}\right)$ with $\mu^2=\frac{1-s}s$ and $\hat g=\left(\begin{array}{cc} 0 & \mu s \\ -\mu^{-1}s^{-1} & 0 \end{array}\right)$.
 \end{itemize}
 This proves the claimed assertions for $G_{\mathcal{B}_{m_s}}$.

 Similarly, we have $[\mathcal{B}_{\eta\circ m_s}\circ g]=[\mathcal{B}_{\eta\circ m_s}]$ if and only if $g\in G_\mathcal{Y}$ and $\hat g=(\eta\circ m_s)\circ g\circ {(\eta\circ m_s)}^{-1}\in G_\mathcal{Y}$. Since $\eta^2=\mathrm{id}$ and elements in $G_\mathcal{Y}$ commute with $\eta$, this is equivalent to $g\in G_\mathcal{Y}$ and $m_s\circ g\circ {m_s}^{-1}\in G_\mathcal{Y}$. Hence $G_{\mathcal{B}_{\eta\circ m_s}}=G_{\mathcal{B}_{m_s}}$.
\end{proof}

\subsection{Classification}

\begin{theorem} \label{thms2s2}
 Let $H>0$ and $K\in\R$. Let $\Sigma$ be an $H$-PMC surface with constant intrinsic curvature $K$ in $\s^2\times\s^2$. Then $K=0$ and $\Sigma$ is part of a product of two curves of constant curvatures.
\end{theorem}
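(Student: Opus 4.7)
My plan is to combine the Torralbo--Urbano correspondence \cite[Theorem 1]{torralbotams12} with the classification Theorem \ref{theo-Kconstante} applied with $c=1>0$. By the correspondence, the congruence class of an $H$-PMC immersion $\Phi\colon\Sigma\to\s^2\times\s^2$ is determined by an unordered pair $\{[\Phi_1],[\Phi_2]\}$ of congruence classes of $H$-CMC immersions $\Phi_j\colon\Sigma\to\s^2\times\R$, both of which induce on $\Sigma$ its ambient intrinsic metric, which by hypothesis has constant curvature $K$.

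The key reduction is then to apply Theorem \ref{theo-Kconstante} to each $\Phi_j$. Since $H>0$ and $c=1>0$, the two non-cylinder alternatives of that theorem are excluded (they both require $c<0$), so each $\Phi_j$ must be part of a vertical cylinder over a curve $\gamma_j\subset\s^2$ of constant geodesic curvature $2H$, forcing $K=0$. In particular the universal cover of $\Sigma$ is isometric to $(\R^2,\dif x^2+\dif y^2)$, and on it each pair of data reduces, up to an ambient isometry, to $\nu_j=0$ together with an affine height function $h_j$ whose gradient is a parallel unit vector field.

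To finish, I must identify $\Sigma$ as a product of two constant-curvature curves in $\s^2\times\s^2$. Up to the ambient congruences and the $\pm$-ambiguity of TU, the unordered pair $\{(\nu_j,\dif h_j)\}_{j=1,2}$ depends only on the angle $\theta\in[0,\pi/2]$ between $\nabla h_1$ and $\nabla h_2$. For each such $\theta$, the product $\gamma_1\times\gamma_2$ with curvatures $(k_1,k_2)=(2H\cos\theta,2H\sin\theta)$ is an $H$-PMC flat surface in $\s^2\times\s^2$, and matching its TU pair to the above cylinder pair concludes the classification by the injectivity of the TU correspondence.

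I expect this last matching to be the technical core. The case $\theta\in\{0,\pi/2\}$ is immediate because the product then sits in a totally geodesic $\s^2\times\s^1$ and corresponds to a degenerate pair $[\Phi_1]=[\Phi_2]$ given by a single cylinder, as discussed at the beginning of Section \ref{sectionpmc}; for $\theta\in(0,\pi/2)$, neither factor is a geodesic, so the product is not contained in such a totally geodesic hypersurface and one has to check directly, via the formulas of \cite{torralbotams12}, that the TU pair of $\gamma_1\times\gamma_2$ is exactly the cylinder pair parametrized by the same angle $\theta$.
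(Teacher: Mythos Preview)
Your reduction via the Torralbo--Urbano correspondence and Theorem~\ref{theo-Kconstante} with $c=1$ to the conclusion $K=0$ and $\nu_1=\nu_2=0$ is exactly the paper's first step. The divergence is in the finishing argument.

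The paper's route is considerably shorter than yours. Once both angle functions vanish, the paper uses the identification $C_j=\nu_j$ between the angle functions of the CMC immersions $\Phi_j$ and the K\"ahler functions of the PMC immersion $\Phi$ for the two K\"ahler structures on $\s^2\times\s^2$ (see \cite[page 790 and the proof of Theorem~1]{torralbotams12}, and also Section~\ref{rktu}). Then \cite[Theorem~2]{torralbotams12} applies directly: a PMC surface with $C_1=C_2=0$ is a product of two curves of constant curvature. No parametrization of cylinder pairs and no matching computation is needed.

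Your approach is in principle sound but leaves real work undone. You correctly observe that, on the flat universal cover, the unordered pair $\{\pm\dif h_1,\pm\dif h_2\}$ modulo isometries of $\R^2$ is governed by a single angle $\theta\in[0,\pi/2]$. However, the step you flag as the ``technical core''---computing the TU pair of $\gamma_1\times\gamma_2$ and checking it realizes the cylinder pair with the same $\theta$---is not carried out, and the injectivity of the TU correspondence (which is for \emph{immersions}, not surfaces) has to be combined carefully with the reparametrization freedom before you can conclude. None of this is wrong, but it duplicates what \cite[Theorem~2]{torralbotams12} already packages for you. I recommend replacing your final two paragraphs by the one-line appeal to $C_j=\nu_j=0$ and \cite[Theorem~2]{torralbotams12}.
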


\begin{proof}
 Let $U$ be a surface with a Riemannian metric and $\Phi:U\to\s^2\times\s^2$ be an isometric immersion such that $\Phi(U)=\Sigma$. Let $\Phi_1,\Phi_2:U\to\s^2\times\R$ be the two $H$-CMC isometric immersions given by the Torralbo-Urbano correspondence. Then, by Theorem \ref{theo-Kconstante} for $c=1$, we have $K=0$ and $\Phi_1(U)$ and $\Phi_2(U)$ are parts of vertical cylinders. In particular their angle functions vanish.

But the angles functions of $\Phi_1$ and $\Phi_2$ coincide with the K\"ahler functions of $\Phi$ for the two K\"ahler structures of $\s^2\times\s^2$ (see  \cite[page 790 and the proof of Theorem 1]{torralbotams12}, precisely the relation $C_j=\nu_j$). Then, by \cite[Theorem 2]{torralbotams12}, $\Sigma$ is part of a product of two curves of constant curvatures.
\end{proof}

\begin{theorem} \label{thmh2h2}
 Let $H>0$ and $K\in\R$. Let $\Sigma$ be an $H$-PMC surface with constant intrinsic curvature $K$ in $\h^2\times\h^2$. Then
  \begin{itemize}
   \item either $K=0$ and and $\Sigma$ is part of a product of two curves of constant curvatures,
   \item either $K=4H^2-1<0$ and $\Sigma$ is part of
   \begin{itemize}
    \item an ARL-surface in some totally geodesic $\h^2\times\R$,
    \item a helicoidal surface of Example \ref{helicoid} in some totally geodesic $\h^2\times\R$,
    \item a Torralbo-Urbano surface,
    \item a surface $\mathcal{A}_\mathrm{id}(\Omega)$,
    \item a surface $\mathcal{A}_\zeta(\Omega)$,
     \item a surface $\mathcal{B}_\eta(\Omega)$,
    \item a surface $\mathcal{B}_{m_s}(\Omega)$ for some $s\in[0,+\infty)$,
    \item or a surface $\mathcal{B}_{\eta\circ m_s}(\Omega)$ for some $s\in[0,+\infty)$.
   \end{itemize}
  \end{itemize}
Moreover, the surfaces appearing in this list are pairwise non congruent.
\end{theorem}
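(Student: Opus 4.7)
The plan is to apply the Torralbo--Urbano correspondence \cite[Theorem 1]{torralbotams12}: any isometric immersion $\Phi:U\to\h^2\times\h^2$ realizing $\Sigma$ gives an unordered pair of $H$-CMC immersions $\Phi_1,\Phi_2:U\to\h^2\times\R$, both inducing the metric of $\Sigma$ and hence both of constant intrinsic curvature $K$. Theorem \ref{theo-Kconstante} (with $c=-1$) then forces either $K=0$ with both $\Phi_j$ vertical cylinders, or $K=4H^2-1<0$ with each $\Phi_j$ equal, up to isometries of $\h^2\times\R$, to a restriction of $\mathcal{X}\circ f_j$ or $\mathcal{Y}\circ f_j$ for some $f_j\in\iso\Omega$.

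When $K=0$, both angle functions vanish, and the argument of Theorem \ref{thms2s2} (via \cite[Theorem 2]{torralbotams12}) forces $\Sigma$ to be part of a product of two curves of constant curvature. Assume now $K=4H^2-1<0$ and split according to the types of $\Phi_1,\Phi_2$. In the mixed case (one $\mathcal{X}$, one $\mathcal{Y}$) the pair is $\{[\mathcal{Y}],[\mathcal{X}\circ f]\}$, and Proposition \ref{equivxy} produces exactly two equivalence classes represented by $\mathrm{id}$ and $\zeta$, yielding $\mathcal{A}_\mathrm{id}(\Omega)$ and $\mathcal{A}_\zeta(\Omega)$. In the helicoidal--helicoidal case the pair is $\{[\mathcal{Y}],[\mathcal{Y}\circ f]\}$; when $f\in G_\mathcal{Y}$ the two immersions are congruent so $\Sigma$ lies in a totally geodesic $\h^2\times\R$ as a helicoidal surface of Example \ref{helicoid}, while for $f\notin G_\mathcal{Y}$ Proposition \ref{equivyy} provides normal forms $\eta$, $m_s$ and $\eta\circ m_s$ for $s\in[0,+\infty)$, giving $\mathcal{B}_\eta(\Omega)$, $\mathcal{B}_{m_s}(\Omega)$ and $\mathcal{B}_{\eta\circ m_s}(\Omega)$. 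Finally, in the ARL--ARL case, a short double-coset analysis of $G_\mathcal{X}\backslash\iso\Omega/G_\mathcal{X}$ (exploiting that $G_\mathcal{X}$ is essentially the stabilizer of $\infty\in\partial\Omega$ extended by $\eta$) yields exactly two classes: the trivial one produces a CMC surface in a totally geodesic $\h^2\times\R$, namely an ARL-surface, and the unique non-trivial one is identified with the Torralbo--Urbano surface of \cite[Theorem 4(3)]{torralbotams12}.

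The pairwise non-congruence claim then follows because the type of the unordered pair $\{[\Phi_1],[\Phi_2]\}$ (vanishing angle functions, two $\mathcal{X}$, two $\mathcal{Y}$, or mixed) is a congruence invariant of $\Sigma$ by the Torralbo--Urbano bijection, and within each type the equivalence relations $\sim$ and $\approx$ of Propositions \ref{equivxy} and \ref{equivyy} (respectively the double-coset analysis in the two-$\mathcal{X}$ case) guarantee that the listed representatives produce pairwise non-congruent surfaces. The main obstacle throughout is the careful bookkeeping required to translate the Torralbo--Urbano bijection on \emph{immersions} into a classification of \emph{images} in $\h^2\times\h^2$: one must keep track of the diagonal action of $\iso\Omega$ by reparametrization on the pair $([\Phi_1],[\Phi_2])$, which is precisely what Propositions \ref{equivxy} and \ref{equivyy} have been set up to encode.
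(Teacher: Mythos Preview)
Your proof follows the same overall architecture as the paper's: apply the Torralbo--Urbano correspondence, invoke Theorem \ref{theo-Kconstante}, and split into cases according to whether each $\Phi_j$ is an ARL or a helicoidal immersion. In the $K=0$ case, the mixed $(\mathcal{Y},\mathcal{X})$ case, and the $(\mathcal{Y},\mathcal{Y})$ case your argument coincides with the paper's, including the reduction $f_1=\mathrm{id}$ and the appeal to Propositions \ref{equivxy} and \ref{equivyy}.

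The one genuine difference is your treatment of the ARL--ARL case. The paper does \emph{not} do a double-coset analysis there: it observes that when both $\Phi_j$ are ARL immersions, both Abresch--Rosenberg differentials vanish; by \cite[Proposition 3 and Theorem 1]{torralbotams12} these coincide (up to constants) with the two Hopf differentials of $\Phi$, so both Hopf differentials of $\Phi$ vanish, and then \cite[Theorem 4]{torralbotams12} classifies such $\Phi$ directly as either a CMC surface in a totally geodesic $\h^2\times\R$ (hence an ARL-surface, given the constant-curvature hypothesis) or the Torralbo--Urbano surface. Your alternative via $G_\mathcal{X}\backslash\iso\Omega/G_\mathcal{X}$ is valid and does yield exactly two classes (since $G_\mathcal{X}$ is the full stabilizer of $\infty\in\partial\Omega$, the double cosets are parametrized by the two $G_\mathcal{X}$-orbits $\{\infty\}$ and $\R$ on $\partial\Omega$). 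However, to finish you must still identify the non-trivial class with the Torralbo--Urbano surface; the most direct way is to note that the Torralbo--Urbano surface has both Hopf differentials zero (it sits in \cite[Theorem 4]{torralbotams12}) and does not lie in a totally geodesic $\h^2\times\R$, hence corresponds to a non-trivial ARL--ARL pair --- which reintroduces the paper's ingredient. So the paper's route is slightly more economical in this sub-case, while yours has the virtue of being uniform with the treatment of the other two sub-cases.
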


\begin{proof}
 Let $U$ be a surface with a Riemannian metric and $\Phi:U\to\h^2\times\h^2$ be an isometric immersion such that $\Phi(U)=\Sigma$. Let $\Phi_1,\Phi_2:U\to\h^2\times\R$ be the two $H$-CMC isometric immersions given by the Torralbo-Urbano correspondence. Then, by Theorem \ref{theo-Kconstante} for $c=-1$,
 \begin{itemize}
  \item either $K=0$ and $\Phi_1(U)$ and $\Phi_2(U)$ are parts of vertical cylinders,
  \item or $K=4H^2-1$ and $\Phi_1(U)$ and $\Phi_2(U)$ are each part of an ARL-surface or of a helicoidal surface of Example \ref{helicoid}
 \end{itemize}

In the first case, we conclude as in the proof of Theorem \ref{thms2s2} that $\Sigma$ is part of a product of two curves of constant curvatures.

From now on we assume that we are in the second case. Then, up to a repara-metrization, we can assume that $U$ is an open set of $\Omega$ and that there exist $f_1,f_2\in\iso\Omega$ such that $\Phi_1=\mathcal{X}\circ f_1$ or $\Phi_1=\mathcal{Y}\circ f_1$ and $\Phi_2=\mathcal{X}\circ f_2$ or $\Phi_2=\mathcal{Y}\circ f_2$. In all cases $\Phi(U)$ will be part of a complete surface so it is not restrictive to assume that $U=\Omega$.

Assume that $\Phi_1=\mathcal{X}\circ f_1$ and $\Phi_2=\mathcal{X}\circ f_2$. The PMC immersion $\Phi$ admits two holomorphic \emph{Hopf differentials}, and they coincide, up to the multiplication by a constant, with the Abresch-Rosenberg differentials of $\Phi_1$ and $\Phi_2$ \cite[Definition 1, Proposition 3 and Theorem 1]{torralbotams12}. Then, as the ARL-surface has a vanishing Abresch-Rosenberg differential, the Hopf differentials of $\Phi$ vanish. Consequently, by \cite[Theorem 4]{torralbotams12}, either $\Sigma$ is an $H$-CMC surface in a totally geodesic $\h^2\times\R$ with vanishing Abresch-Rosenberg differential, so an ARL-surface because it also has constant curvature, or $\Sigma$ is a Torralbo-Urbano surface.

Assume that $\Phi_1=\mathcal{Y}\circ f_1$ and $\Phi_2=\mathcal{X}\circ f_2$. Then the pair $([\Phi_1\circ f_1^{-1}],[\Phi_2\circ f_1^{-1}])$ corresponds to $[\Phi\circ f_1^{-1}]$, so will yield a reparametrization of $\Sigma$ (up to congruences) in $\h^2\times\h^2$. Hence we may assume that $f_1=\mathrm{id}$. Then $\Phi=\mathcal{A}_{f_2}$ (up to congruences) and, by Proposition \ref{equivxy}, $\Sigma$ is congruent to $\mathcal{A}_\mathrm{id}(\Omega)$ or to $\mathcal{A}_\zeta(\Omega)$.

Assume that $\Phi_1=\mathcal{X}\circ f_1$ and $\Phi_2=\mathcal{Y}\circ f_2$. Then, $\Phi$ is congruent to the PMC immersion corresponding to the pair $([\Phi_2],[\Phi_1])$, the conclusion is the same as in the previous case.

Finally, assume that $\Phi_1=\mathcal{X}\circ f_1$ and $\Phi_2=\mathcal{Y}\circ f_2$. As above we can assume that $f_1=\mathrm{id}$. If $f_2\in G_\mathcal{Y}$, then $[\Phi_1]=[\Phi_2]$ and $\Sigma$ is a helicoidal surface of Example \ref{helicoid} in some totally geodesic $\h^2\times\R$. If $f_2\notin G_\mathcal{Y}$, then $\Phi=\mathcal{B}_{f_2}$ (up to congruences) and, by Proposition \ref{equivyy}, $\Sigma$ is congruent to $\mathcal{B}_\eta(\Omega)$, to $\mathcal{B}_{m_s}(\Omega)$ or to $\mathcal{B}_{\eta\circ m_s}(\Omega)$ for some $s\in[0,+\infty)$.

The fact that all surfaces in this list are pairwise non congruent is a consequence of the fact that any two of them give different pairs of classes of congruences of CMC isometric immersions into $\h^2\times\R$.
\end{proof}

\begin{remark}
The extrinsic normal curvature of an $H$-PMC surface in $\h^2\times\h^2$ is given by $$\overline K^\perp=\frac{\nu_2^2-\nu_1^2}2$$ (see \cite[page 790]{torralbotams12} and recall that $C_j=\nu_j$). Hence the surfaces $\mathcal{A}_\mathrm{id}(\Omega)$, $\mathcal{A}_\zeta(\Omega)$, $\mathcal{B}_{m_s}(\Omega)$ and $\mathcal{B}_{\eta\circ m_s}(\Omega)$ for  $s\in[0,+\infty)$ are examples of $H$-PMC surfaces in $\h^2\times\h^2$ with non identically zero extrinsic normal curvature. Up to our knowledge, these are the first examples of such surfaces. Note that \cite[Theorem 3]{torralbotams12} classifies $H$-PMC surfaces in $\s^2\times\s^2$ and $\h^2\times\h^2$ whose extrinsic normal curvature is identically zero.
\end{remark}

\begin{remark}
By Proposition \ref{groups}, the group of isometries of $\mathcal{A}_\mathrm{id}(\Omega)$ induced by ambient isometries is a one-parameter group (namely, $G_{\mathcal{A}_\mathrm{id}(\Omega)}=D$); hence, it should be possible to compute an explicit expression of this surface, possibly in terms of solutions of ordinary differential equations. The same holds for the surface $\mathcal{B}_\eta(\Omega)$. However, for the surfaces $\mathcal{A}_\zeta(\Omega)$, $\mathcal{B}_{m_s}(\Omega)$ and $\mathcal{B}_{\eta\circ m_s}(\Omega)$ for $s\in[0,+\infty)$, the groups of isometries induced by ambient isometries are discrete, so such an explicit expression may not exist.
\end{remark}

\subsection{A remark on Torralbo and Urbano's correspondence} \label{rktu}

Let $H>0$. Let $(\Sigma,\dif s^2)$ be an oriented simply connected Riemannian surface. In this section we explain in details why Torralbo and Urbano's correspondence is a one-to-one correspondence between classes of congruence of $H$-PMC immersions into $\M^2_c\times\M^2_c$ and \emph{unordered} pairs of classes of congruence of $H$-CMC immersions into $\M^2_c\times\R$.

It follows from their work and our Theorem \ref{theo1} that a $4$-uple $$(\nu_1,\dif h_1,\nu_2,\dif h_2)$$ where $(\nu_1,h_1)$ and $(\nu_2,h_2)$ are solutions to \eqref{M1}, \eqref{M2}, \eqref{M3} and \eqref{M4} on $\Sigma$ characterizes an $H$-PMC immersion into $\M^2_c\times\M^2_c$ modulo isometries in the connected component of the identity in $\iso{\M^2_c\times\M^2_c}$. In the notations of \cite[pages 795 and 796]{torralbotams12} we have, for $j=1,2$,
\begin{equation} \label{relcjnuj}
 C_j=\nu_j,\quad\eta_j=h_j,\quad\gamma_j=-i\sqrt2(\eta_j)_z.
\end{equation}

On the other hand, this $4$-uple characterizes an ordered pair of $H$-CMC immersions into $\M^2_c\times\R$ modulo isometries in the connected component of the identity in $\iso{\M^2_c\times\R}$ for each of them.

The isometry group of $\M^2_c\times\M^2_c$ has $8$ connected components; they can be obtained from the connected component of the identity applying the following isometries:
$$G_1:(x_1,x_2)\mapsto(F(x_1),x_2),$$
$$G_2:(x_1,x_2)\mapsto(x_1,F(x_2)),$$
$$G_3:(x_1,x_2)\mapsto(x_2,x_1),$$ where $F:\M^2_c\to\M^2_c$ is an orientation reversing isometry (i.e., antiholomorphic).

Following the notation of \cite[pages 787 and 790]{torralbotams12}, we let $J$ be a conformal structure on $\M^2_c$, $\omega$ be the corresponding K\"ahler form on $\M^2_c$, $\pi_j:\M^2_c\times\M^2_c\to\M^2_c$ the $j$-st projection ($j=1,2$), $\omega_1=\pi_1^*\omega+\pi_2^*\omega$ and $\omega_2=\pi_1^*\omega-\pi_2^*\omega$.

Let $\Phi:\Sigma\to\M^2_c\times\M^2_c$ be an $H$-PMC isometric immersion with data $$(\nu_1,\dif h_1,\nu_2,\dif h_2).$$ For $j=1,2$, let $\Phi_j:\Sigma\to\M^2_c\times\R$ be the $H$-CMC isometric immersion with data $(\nu_j,\dif h_j)$.

The manifold $\M^2_c\times\M^2_c$ has two K\"ahler structures, $J_1=(J,J)$ and $J_2=(J,-J)$. The K\"ahler functions $C_j$ ($j=1,2$) of $\Phi$ for these two K\"ahler structures are defined by $$\Phi^*\omega_j=C_j\omega_\Sigma$$ where $\omega_\Sigma$ is the area $2$-form on $\Sigma$. Moreover, by \cite[equations (3.2) and (3.3)]{torralbotams12}, the ``gamma'' functions $\gamma_j$ ($j=1,2$) satisfy
\begin{equation} \label{defgammaj}
 \gamma_j=\frac{\sqrt2}H\langle J_j\Phi_z,\mathcal{H}\rangle
\end{equation}
where $\mathcal{H}$ is the mean curvature vector of $\Phi$.

We have $F^*\omega=-\omega$, $\pi_1\circ G_1=F\circ\pi_1$ and $\pi_2\circ G_1=\pi_2$ so $$(G_1\circ\Phi)^*\omega_1=\Phi^*G_1^*\pi_1^*\omega+\Phi^*G_1^*\pi_2^*\omega
=-\Phi^*\pi_1^*\omega+\Phi^*\pi_2^*\omega=-\Phi^*\omega_2,$$ so the first K\"ahler function of $G_1\circ\Phi$ is $-C_2$. Similarly, the second K\"ahler function of $G_1\circ\Phi$ is $-C_1$. Also, we have $$\langle J_1(G_1\circ\Phi)_z,\dif G_1(\mathcal{H})\rangle=-\langle J_2\Phi_z,\mathcal{H}\rangle,$$
$$\langle J_2(G_1\circ\Phi)_z,\dif G_1(\mathcal{H})\rangle=-\langle J_1\Phi_z,\mathcal{H}\rangle,$$ so,
using \eqref{defgammaj}, we obtain that the ``gamma'' functions of $G_1\circ\Phi$ are $-\gamma_2$ and $-\gamma_1$. Hence, taking \eqref{relcjnuj} into account, we conclude that $G_1\circ\Phi$ has data $$(-\nu_2,-\dif h_2,-\nu_1,-\dif h_1).$$

Arguing in the same way, we can see that $G_2\circ\Phi$ has data $$(\nu_2,\dif h_2,\nu_1,\dif h_1).$$

Finally, $\pi_1\circ G_3=\pi_2$ and $\pi_2\circ G_3=\pi_1$, from what we deduce that the K\"ahler functions of $G_3\circ\Phi$ are $C_1$ and $-C_2$. We also check that the ``gamma'' functions of $G_3\circ\Phi$ are $\gamma_1$ and $-\gamma_2$. Hence $G_3\circ\Phi$ has data $$(\nu_1,\dif h_1,-\nu_2,-\dif h_2).$$

Hence, from the data of $\Phi$ and its compositions with isometries in the group generated by $G_1$, $G_2$ and $G_3$, we obtain the data of the pair $(\Phi_1,\Phi_2)$ and all pairs that can be obtained by possibly replacing each $\Phi_j$ by $R\circ\Phi_j$ where $R$ is a $\pi$-rotation around a horizontal geodesic in $\M^2_c\times\R$ and possibly permuting the two elements. Such a set exactly characterizes an \emph{unordered} pair of classes of congruence of $H$-CMC isometric immersions (note that orientation reversing isometries of $\M^2_c\times\R$ give rise to $-H$-CMC immersions and the corresponding data are not admissible for this correspondence).

\bibliographystyle{amsplain}
\bibliography{ddv}

\end{document}